\newtheorem{thm}{Theorem}[section]
\newtheorem{lem}[thm]{Lemma}
\newtheorem{prop}[thm]{Proposition}
\newtheorem{cor}[thm]{Corollary}
\theoremstyle{definition}
\theoremstyle{remark}
\newtheorem{re}[thm]{Remark}
\newcommand{\RNum}[1]{\uppercase\expandafter{\romannumeral #1\relax}}
\numberwithin{equation}{section}
\newenvironment{Ali}{\color{blue}}{\color{black}}
\newcommand{\BBB}{\begin{Ali}}
\newcommand{\FFF}{\end{Ali}}
\newenvironment{tat}{\color{red}}{\color{black}}
\newcommand{\RRR}{\begin{tat}}
\newcommand{\GGG}{\end{tat}}
\theoremstyle{definition}
\newtheorem{definition}{Definition}[section]
\theoremstyle{remark}
\newcommand{\RN}[1]{%
  \textup{\uppercase\expandafter{\romannumeral#1}}%
}
\newcommand{\bfu}{\mathbf{u}}
\newcommand{\bfv}{\mathbf{v}}
\newcommand{\bfV}{\mathbf{V}}
\newcommand{\bfx}{\mathbf{x}}
\def\bfF{\mathbf{F}}
\title{Continuous data assimilation for two-phase flow: analysis and simulations}
\author{Yat Tin Chow}
\address{Department of Mathematics, University of California, Riverside, CA, USA}
\email{yattinc@ucr.edu}
\author{Wing Tat Leung}
\address{Department of Mathematics, University of California, Irvine, CA, USA}
\email{wtleung@uci.edu}
\author{Ali Pakzad}
\address{Department of Mathematics, Indiana University Bloomington, IN, USA}
\email{apakzad@iu.edu}
 \date{\today}
\subjclass[2010]{Primary: 76T05, 65M12, 76D55; Secondary: 35Q35, 35Q93}
\keywords{Data assimilation,  Multiphase flow}
\begin{document}
\begin{abstract}

We propose, analyze, and test a novel continuous data assimilation two-phase flow algorithm for reservoir simulation. We show that the solutions of the algorithm, constructed using coarse mesh observations, converge at an exponential rate in time to the corresponding exact reference solution of the two-phase model. More precisely, we obtain a stability estimate which illustrates an exponential decay of the residual error between the reference and approximate solution, until the error hits a threshold depending on the order of data resolution. Numerical computations are included to demonstrate the effectiveness of this approach, as well as variants with data on sub-domains. In particular, we demonstrate numerically that  synchronization is achieved for data collected from a small fraction of the domain.


\end{abstract}
\maketitle

\section{Introduction}
\subsection{Data assimilation}
 Data assimilation (DA) refers to a class of methodologies which combines information from grain coarse observational data with simulation/dynamical model in order to obtain a more accurate forecast.  The method has a long history, with applications in weather modeling and environmental forecasting  \cite{K03}, as well as the medical, environmental and biological science,  \cites{KKMMMP11,  MKKNPMM13}, imaging, traffic control, finance and oil exploration \cite{ABN16}. There are a variety of data assimilation techniques, with which actual measured quantities over time are incorporated in system models.   One classical technique,  which is based on a linear-quadratic estimation, is known as the Kalman filter.  This Bayesian approach gives exact probabilistic predictions, although the underlying system and any corresponding observation models are assumed to be linear.  This approach has been modified to cover more general cases in ensemble Kalman filter, extended Kalman filter and the unscented Kalman filter;  see  \cites{Asch-Data2016, Evensen-Data2009,Law-AMathematical2015}. 
 
  A major difficulty of applying a physical model to real life applications is that, usually the initial condition cannot be known/measured exactly, and only an approximation over a coarse spatial resolution is known.    This imprecision in measuring the initial condition may sometimes cause an exponential growing error in the solution of a nonlinear system.  To overcome this difficulty, a promising approach, known as the continuous data assimilation, was proposed and analyzed by Azouani, Olson, and Titi in  \cites{AOT14-2, AOT14} based on techniques coming from control theory.  This approach introduces a feedback control term at the
PDE level to synchronize the computed solution with the true solution corresponding to the observed data.  To describe the method, we consider a  dynamical system 
\begin{align}\label{origODE} 
\frac{d \bfu}{dt} =\bfF(\bfu)\, ,
\end{align}
with insufficient/inaccurate knowledge of the initial state $\bfu(0)$, but with a solution $\bfu(t)$ on a coarse grid that is believed to accurately reflect some aspects of the underlying physical reality. 
Given observational data of the system at a coarse spatial resolution of size $H$, i.e. $\Pi_H(\bfu(t))$ from some given interpolation operator $\Pi_H$, the algorithm is to construct an approximate solution $\bfv(t)$ from the observations that satisfies the auxiliary  equation
\begin{align}\label{nudgedODE} 
\frac{d \bfv}{dt} =\bfF(\bfv) -\mu \, \Pi_H\, (\bfv-\bfu)\;, \quad \bfv(0)=\text{arbitrary}\;,
\end{align} 
where $\mu >0$ is a relaxation (nudging) parameter.   The goal is to pick $\mu >0$ and $H>0$ such that
$$ \bfv(t)\rightarrow \bfu(t) $$
as $t \rightarrow \infty$ in a suitable spatial space. The above algorithm is designed to work for many nonlinear dissipative dynamical systems of the form \eqref{origODE}, with their solutions well-known to be unstable.  Owing to this instability, it is expected that any small error in the initial data could lead to an exponentially growing error in the solutions.  In these cases, the dissipative term (only) controls the small scales and instabilities occur at large scales.  The feedback term in \eqref{nudgedODE} that is newly introduced then aims to stabilize the system and damp the error term at large scales by forcing  (nudging) the large spatial scales of the measured solution (of the auxiliary equation) back to the reference solution.
  
 In the context of the incompressible 2D Navier-Stokes equations,  the  authors  in  \cite{AOT14}  proved that, for large enough $\mu$ and small enough $H$, the approximate solution  $\bfv$ to  \eqref{nudgedODE}, converges exponentially fast to the exact solution $\bfu$.  Numerical experiments were carried out successfully to test this algorithm for many nonlinear systems, for instance, the 2D Navier-Stokes equation \cites{Gesho-Acomputational2016,  LRZ19, Garcia-Archilla-Uniform2020, JP21, BBM21},  the Rayleigh-B\'{e}nard equations \cites{Farhat-Assimilation2018, Altaf-Downscaling2017},  and  the Kuramoto-Sivashinsky equations \cites{LT17}.  Continuous data assimilation applied to other physical phenomena and PDEs includes   non-Newtonian fluids \cite{CGJP21}, magnetohydrodynamic   equations \cite{Biswas-Continuous2018},   Leray-$\alpha$ model of turbulence \cite{Jolly-Adata2017},    quasi-geostrophic equation \cite{Jolly-Adata2017},   Darcy’s equation \cite{Markowich-Continuous2019},  KdV equations  \cite{Jolly-Determining2017},     primitive equations \cite{Pei-Continuous2019}, and many others.  While the aforementioned works assume noise-free observations,  the method is later extended to the case when only noisy data can be obtained, e.g. in \cites{Bessaih-Continuous2015, Jolly-Continuous2019, Foias-ADiscrete2016}. The authors refer the readers to other recent literature on this topic; see, e.g. \cites{Auroux-ANudging2008,Biswas-Data2021, Farhat-Data2020, Farhat-Continuous2015, Farhat-Continuous2017,Farhat-Data2016,Farhat-Abridged2016, FJT, FLT,Foias-ADiscrete2016, Hudson-Numerical2019,Ibdah-Fully2020, LT17, Mondaini-Uniform2018, Pawar-Long2020,ZRSI19}.

 \subsection{Multi-phase  flow}
 Thanks to the development of numerical reservoir simulation, oilfield corporations have deeply benefited from the technology in terms of confidently predicting oil
recovery estimates and determining the selection of operations during
the deployment of specific recovery technologies. 
 During the screening stage, numerical reservoir model is established and simulations are run to determine the feasibility of many injection/production options. This requires reservoir simulation to be accurate and time efficient. Furthermore, for oil and gas reservoirs that are already in production, it is necessary to determine reservoir parameters (e.g. permeability, porosity) with increasing certainty \cites{oliver2008inverse,aanonsen2009ensemble}. As the oil field matures, more effective practices such as production enhancement, chemical treatment or infill drilling can greatly extend the life of the oil reservoir, thereby increasing the overall recovery rate. 

A commonly used model in reservoir simulation is the multi-phase flow model \cites{kueper1991two,whitaker1986flow,blunt1992simulation,trangenstein1989mathematical}.  In recent decades, both the mathematical analysis and numerical simulation of the two-phase flows have been a focus of study for many researchers and practitioners, thanks to their important applications in petroleum engineering and hydrology.  The system of equations governing two-phase immiscible incompressible flows in porous media consists of a nonlinear elliptic Darcy-type equation for the global pressure and a nonlinear parabolic equation with degenerate diffusion term for the saturation, which are coupled by means of the total velocity, recuperated from Darcy’s equation \cites{chavent1986mathematical,antontsev1989boundary,chen1997single}.

 Due to the nonlinear nature of the problem, the velocity of the fluid in different phases highly depends on the saturation and the pressure of the respective phase.  So as to obtain an accurate simulation, we are required to have a good initialization of the model parameters as well as an accurate initial condition of the saturation.  While uncertainty quantification (UQ) and parameter estimation have been used to predict reservoir parameters \cites{osypov2013model,park2013history,jiang2017multiscale}, 
 unfortunately it is not feasible to obtain accurate microscope data of saturation and pressure at a particular time slice.  
 Nonetheless, a coarse scale approximation of the saturation and pressure field can be obtained using seismic waves data and well logs data.
 In this work, we consider a simple two-phase model \eqref{eq:pressure_eq2} and \eqref{eq:S_eq2}, and inject these coarse-scale data directly into a system via our proposed data assimilation algorithm
 to control the error of the solution without using any microscope initial condition.



 \subsection{Main result of this paper}
 
 While two-phase  models have a long history of success on certain problems, they tend to lose accuracy
on more complicated problems due to the insufficient and inaccurate knowledge of the initial state. Meanwhile, continuous data assimilation (DA) has recently been used to improve accuracy  by incorporating measurement data into the simulation.   In this work, we introduce a data assimilation model \eqref{eq: DA2phase} which combines the coarse grid saturation measurement data with the multi-phase flow problem. For illustrative purpose, instead of a general multi-phase flow model, we  only focus on an immiscible incompressible two-phase flow model  \eqref{eq:pressure_eq2} and \eqref{eq:S_eq2}. After  performing  an error analysis for the data assimilation algorithm,   we prove an exponentially decaying error bound between the exact and  approximate  solutions until the error reaches a certain level (Theorem \ref{main_theorem}). More precisely, for a given  data resolution $H$,  we find   $t_0$ in terms of $H$ such that synchronization is
guaranteed for all $t < t_0$ for large enough $\mu \sim \mathcal{O}(1/H)$. In this case,  coarser data resolution leads to a smaller $t_0$ and vise versa. 

In addition, we illustrate the efficiency of the algorithm by extensive computational studies. We find  that the nudging algorithm achieves synchronization with
data that is much more coarser than required by the rigorous estimates in Theorem \ref{main_theorem}.  Furthermore, we demonstrate numerically that observation on a small fraction of the domain suffices for global assimilation, which may in practice indeed be the case for data collected on the smaller portion of the whole domain.

\subsection*{Organization of this paper}
In section \ref{sec2}, we briefly introduce basic notations and preliminaries used in the analysis. Section \ref{sec3}  provides background on the  two-phase  model and revisits the existence and uniqueness argument  of the  model.  Later, in
section \ref{sec4}, after introducing the data assimilation algorithm,  we state and prove our main results. We give conditions under which
the approximate solution, obtained by the data assimilation algorithm, converges to the solution
of the  two-phase  model.  In section \ref{sec5}, we  present numerical results to demonstrate the performance of our proposed data assimilation model.

\section{Notation and Preliminaries} \label{sec2}

 Let $\Omega \subset \mathbb{R}^d$ be a bounded open domain.  From what follows, we  always write $C$ as a generic positive constant independent of the  model parameters.   Let $p \in [1 , \infty]$,  and the
Lebesgue space $L^p(\Omega)$ is the space of all measurable functions $\bfv$ on $\Omega$ with which
$$\|\bfv\|_{L^p} :=\big( \int_{\Omega} |\bfv (\bfx)| ^p\, d\bfx\big)^{\frac{1}{p}} \, < \infty,  \hspace{1cm} \text{if} \hspace{0.2cm} p \in [1 , \infty),$$
$$\|\bfv\|_{L^{\infty}} := {\text{ess-sup}}_{\bfx \in \Omega}  |\bfv (\bfx)| < \infty,  \hspace{1cm} \text{if} \hspace{0.2cm} p  = \infty. $$
The $ L^2$  norm and inner product will be denoted by $\|\cdot\|$ and $( \cdot ,  \cdot)$  respectively, while all other norms will be labeled with subscripts.  Let $\bfV$ be a Banach space of functions defined on $\Omega$ with the associated norm $\| \cdot \|_\bfV$. We denote by $L^p(a, b; \bfV)$,  $p \in [1 , \infty]$,  the space of functions $\bfv : (a, b) \rightarrow  \bfV$ such that
$$\|\bfv\|_{L^p (a , b ; \bfV)} :=\big( \int_a^b \|\bfv(t)\|_\bfV ^p\, d\bfx\big)^{\frac{1}{p}} \, < \infty,  \hspace{1cm} \text{if} \hspace{0.2cm} p \in [1 , \infty),$$
$$\|\bfv\|_{L^{\infty} (a , b ; \bfV)} := \text{ess-sup}_{t  \in (a , b)} \|\bfv( t )\|_\bfV < \infty,  \hspace{1cm} \text{if} \hspace{0.2cm} p  = \infty. $$
From now on, for notational sake, we will denote an integral of a function $f$ over a domain $\Omega$ with one of the following three notations if no confusion will arise
\[
\int_\Omega f(x) dx \,, \, \quad \int_\Omega f dx \, ,   \quad \text{ and } \quad \int_\Omega f \,.
\]
In addition, we consider
$$V_{0}=\{v\in H^{1}(\Omega)|\;\int_{\Omega}v=0\}\, , $$ 
$$V_{0}^{*}=\{v\in H^{-1}(\Omega)|\int_{\Omega}v=0\}.$$

\begin{definition}\label{bilinear}
The  bi-linear operator
$a (\cdot,\cdot)$ and semi-norm $|\cdot|_{K}$ are given as 
\[
a(u,v)=\int_{\Omega}K \, \,  \nabla u\cdot\nabla v,\;\quad \forall\,  \, \, u,v\in H^{1}(\Omega)\, , 
\]
and 
\[
|u|_{K}^{2}=a(u,u), 
\]
 where $K$ is a tensor with $K_{ij}\in L^{\infty}(\Omega)$.
\end{definition}
\begin{re}
Note  that $|\cdot|_{K}$ defines a norm for $V_{0}$.
\end{re}

\begin{definition}\label{Greenfun}
 The Green's operator $G:V_{0}^{*}\rightarrow V_{0}$ is given  by 
\[
a(G(u),v)=(u,v).
\]
We define the norms of $V_{0}$ and $V_{0}^{*}$ respectively as $\|v\|_{V_{0}}=|v|_{K}$
and $\|u\|_{V_{0}^{*}}:=\|G(u)\|_{V_{0}}$ for all $u\in V_{0}^{*},v\in V_{0}$.
\end{definition}
Our data assimilation method requires that the observational measurements
$\Pi_{H}(\bfu)$,  be given as linear interpolant observables,  satisfying
$\Pi_{H}:L^{2}(\Omega)\rightarrow L^{2}(\Omega)$ such that 
\begin{equation}
\begin{split}\|\Pi_{H}\varphi\| & \leq c_{I}\|\varphi\|,\hspace{1cm}\forall\,\varphi\in L^{2}(\Omega),\\
\|\varphi-\Pi_{H}\,\varphi\| & \leq c_{0}\,H\|\varphi\|_{H^{1}(\Omega)},\hspace{1cm}\forall\,\varphi\in H^{1}(\Omega).
\end{split}
\label{I_h}
\end{equation}
An example of such interpolation operators can be given by a projection operator onto the
Fourier modes with wave numbers $|k|\le1/H$. Other physical examples
include the volume elements and constant finite element interpolation \cite{JonesTiti}.

\section{A simple two-phase  flow Model}  \label{sec3}

In this section, we  describe an immiscible incompressible two-phase
flow model. In this model, the flow of the fluids is governed by the
Darcy's law of the water and the oil phases. More precisely,  the velocity $\tilde{v}_{\alpha}$
of phase $\alpha$   is described as 
\[
\tilde{v}_{\alpha}=-\cfrac{k_{r\alpha}}{\mu_{\alpha}}\, K\, \nabla P_{\alpha},\;\text{in }(0,T)\times\Omega,\; \hspace{0.5cm}\alpha=o \,  (\text{oil}),w \, (\text{water}),
\]
where $k_{r\alpha},\mu_{\alpha},P_{\alpha}$,  and $\rho_{\alpha}$
are the relative permeability, viscosity, pressure,  and density of
phase $\alpha$, $K$ is the absolute permeability, $T$ is the final
time and $\Omega$ is the reservoir domain. We consider the capillary
pressure $P_{cow}$ defined as 
\[
P_{cow}:=P_{o}-P_{w}.
\]
The saturation of phase $\alpha$, denoted by   $S_{\alpha}$, is governed by the mass balance equation
\[
\cfrac{\partial(\phi\rho_{\alpha}S_{\alpha})}{\partial t}+\nabla \cdot(\rho_{\alpha}\tilde{v}_{\alpha})=\rho_{\alpha}\tilde{q}_{\alpha},\;\text{in }(0,T)\times\Omega,\; \hspace{0.5cm}\alpha=o ,w,
\]
where $\tilde{q}_{\alpha}$ is the volumetric input of phase $\alpha$
and $\phi$ is the porosity of the medium. The saturation $S_{\alpha}$ then 
satisfies
\begin{equation}
S_{o}+S_{w}=1.\label{eq:sum_sat}
\end{equation}

To simplify the model, in our work, we only consider the case when the
density $\rho_{\alpha}$, the viscosity $\mu_{\alpha}$,  and the porosity $\phi$ are constant functions,
and that the relative permeability $k_{r\alpha}$, the capillary pressure
$P_{cow}$ are functions only depending on $S_{\alpha}$. Using \eqref{eq:sum_sat},
we have $S_{o}=1-S_{w}$ and therefore $k_{r\alpha}$ and $P_{cow}$
can be written as a function of $S_{w}$.  We then define the function
$\kappa_{\alpha}$ as 
\[
\kappa_{\alpha}(S_{w})=\cfrac{k_{r\alpha}(S_{w})}{\phi\mu_{\alpha}}.
\]
To simplify the notation, we will omit the sub-index $w$ and denote $S_w$  as $S$  in the following discussion,. We furthermore assume $\kappa_{\alpha}\in L^{\infty}(0,1)$, $K$
is a $L^{\infty}$ positive tensor  and
\begin{align*}
\infty>\overline{\kappa}\geq(\kappa_{w}(S_{w})+\kappa_{o}(S_{w})) & \geq\underline{\kappa}>0,
\end{align*}
\begin{align*}
\infty>\overline{K}\geq\xi^{T}K\xi & \geq\underline{K}>0\;\quad \forall\xi\in\mathbb{R}^{d},\|\xi\|=1, 
\end{align*}
for some constants $\underline{\kappa}$, $\overline{\kappa}$, $\underline{K}$, 
and $\overline{K}$. Since $\phi$, $\rho_{\alpha}$ are constants,
the mass balance equations can be simplified as 
\begin{equation}
\cfrac{\partial (S_{\alpha})}{\partial t}+\nabla \cdot (v_{\alpha})=q_{\alpha},\;\text{in }(0,T)\times\Omega,\;\alpha=o,w, \label{eq:mass_balance}
\end{equation}
where $q_{\alpha}=\cfrac{\tilde{q}_{\alpha}}{\phi}$ and 
\[
v_{\alpha}=-\kappa_{\alpha}\, K \, \nabla P_{\alpha} ,\;\text{in }(0,T)\times\Omega,\;\alpha=o,w.
\]
We  then obtain a pressure equation by summing the equations \eqref{eq:mass_balance}
\begin{equation}
-\nabla\cdot\Big(K(\kappa_{o}(\nabla P_{o})+\kappa_{w}(\nabla P_{w}))\Big)=\nabla \cdot (v_{o}+v_{w})=q_{o}+q_{w}.\label{eq: pressure_eq0}
\end{equation}
With this, the system can be simplified to 
\begin{align}
-\nabla\cdot((\kappa_{w}+\kappa_{o})K\nabla P_{w}) & =q_{t} + \nabla\cdot(\kappa_{o}\nabla P_{cow}), \label{eq: pressure_eq1}\\
\cfrac{\partial (S_{w})}{\partial t} - \nabla\cdot(\kappa_{w}K\nabla P_{w}) & =q_{w}, \label{eq:S_eq1}
\end{align}
where $q_{t}=q_{o}+q_{w}$.

\subsection*{Global pressure}
We  recall how to reformulate the equations with
the help of the concept of global pressure. We assume $\beta = -\cfrac{\partial P_{cow}}{\partial S_{w}}>0$. The global pressure  $P$
is defined as 
\[
P(S,t,x)=P_{o}(t,x)+\int_{0}^{S}f_{w}\beta(\xi)d\xi, 
\]
where  $f_{i}=\cfrac{\kappa_{i}}{\kappa}$
and $\kappa=\sum_{i=o,w}\kappa_{i}$  and $S= S(t,x)$ is a function of $(t,x)$. In case there will not cause any confusion, we would, by an abuse of notation, simplify $p(S(t,x),t,x)$ as $p(t,x)$. We may then check directly that
\[
\nabla\left(\int_{0}^{S}f_{w}\beta(\xi)d\xi\right)=f_{w}\beta\nabla S=-f_{w}\nabla P_{cow}.
\]
Hence,  we have  $\kappa\nabla P=\kappa\nabla P_{o}-\kappa_{w}\nabla P_{cow}=\kappa_{o}\nabla P_{o}+\kappa_{w}\nabla P_{w}$.

Now define a function $\theta$ as
\begin{equation}\label{DefTheta}
    \theta(S):=\int_{0}^{S}\cfrac{\kappa_{w}(\xi)\kappa_{o}(\xi)}{\kappa(\xi)}\beta(\xi)d\xi, 
\end{equation}
which is monotone since $\cfrac{\kappa_{w}(\xi)\kappa_{o}(\xi)}{\kappa(\xi)}\beta(\xi)>0$ and obtain 
\[
\kappa(S)\, \nabla\theta(S)=\kappa_{w}(S)\, \kappa_{o}(S)\, \beta (S)\, \nabla S=-\kappa_{w}(S)\, \kappa_{o}(S)\, \nabla P_{cow}(S) \,.
\]
We therefore have
\begin{align*}
\nabla\theta+\kappa_{w}\nabla P= & -\cfrac{\kappa_{o}\kappa_{w}}{\kappa}\nabla P_{cow}+\cfrac{\kappa_{o}\kappa_{w}}{\kappa}\nabla P_{o}+\cfrac{\kappa_{w}^{2}}{\kappa}\nabla P_{w}\\
= & \kappa_{w}\nabla P_{w}=-K^{-1}v_{w}
\end{align*}
and 
\begin{align*}
-\nabla\theta+\kappa_{o}\nabla P= & \cfrac{\kappa_{o}\kappa_{w}}{\kappa}\nabla P_{cow}+\cfrac{\kappa_{o}^{2}}{\kappa}\nabla P_{o}+\cfrac{\kappa_{o}\kappa_{w}}{\kappa}\nabla P_{w}\\
= & \kappa_{o}\nabla P_{o}=-K^{-1}v_{o}.
\end{align*}
This leads to the equation
\[
-\nabla(\kappa(S)K(x)\nabla P(t,x))=q_{t}(x), \; \quad \forall x\in\Omega, 
\]
where $q_{t}=q_{o}+q_{w}$. We then define an inverse map $\mathbb{T}$ of
$\theta$, such that 
\[
\mathbb{T}(\theta(S))=S \,.
\]
We notice that $\mathbb{T}$ is well-defined and $\theta$ is irreversible since $\theta$ is monotone. With the help of the aforementioned notations,
the system can be now rewritten as: find the solution pair $(p,S)$ satisfying 

\begin{align}
-\nabla(\kappa K\nabla P) & =q_{t},\label{eq:pressure_eq2}\\
\partial_{t}(S)-\nabla\cdot\Big(K\nabla\theta(S)+\kappa_{w}K\nabla P\Big) & =q_{w}\,.\label{eq:S_eq2}
\end{align}

\subsection{Analysis of the two-phase  model; Existence and Uniqueness results}  

In this subsection, we  first revisit some classical results of the
two-phase  model,  and then  review  the existence and uniqueness results  of the weak solution of the problems \eqref{eq:pressure_eq2} and \eqref{eq:S_eq2}
as in \cites{alt1985nonsteady,chen2001degenerate,cances2012existence,amaziane2013existence,arbogast1992existence}.  The remaining analysis is proceed with the following common assumptions \cites{chen2001degenerate,yeh2006holder}.

\begin{enumerate}
\item[A1.] $\Omega \subset \mathbb{R}^{d}$ for $d \in \{2,3\}$ is a connected Lipschitz domain.
\item[A2.] $\kappa_{\alpha}$ are continuous and $\kappa_{w}(0)=\kappa_{o}(0)=0$
\begin{align*}
\kappa_{o}(S_{o}) & >0\;\text{if }S_{o}>0\,,\\
\kappa_{w}(S_{w}) & >0\;\text{if }S_{w}>0\,.
\end{align*}
\item[A3.] $q_{w},q_{o} \in L^{\infty}(0,T;H^{-1}(\Omega))$. 
\item[A4.] $\kappa_{w}$ and $\kappa_{o}$ satisfy 
\begin{align*}
\kappa_{w}(S) & =C_{w}S^{1+\xi_{w}},\\
\kappa_{o}(S) & =C_{o}(1-S)^{1+\xi_{o}},
\end{align*}
for some $\xi_{o}, \, \xi_{w}, \, C_{w}, \, C_{o}>0$. 
\item[A5.] Denote $b(S)=\cfrac{\kappa_{w}(S)\kappa_{o}(S)}{\kappa(S)}\, \beta(S)$, 
and then  $b(S)\leq C_{0}$ for all $0\leq S\leq1$.
\item[A6.] $\beta(S)$ satisfies the bound
\[
c_{\alpha}(S)^{-\beta_{w}}(1-S)^{-\beta_{o}}\leq\beta(S)\leq C_{\alpha}(S)^{-\beta_{w}}(1-S)^{-\beta_{o}}\,, 
\]
for some $\beta_{o}, \, \beta_{w}, \, C_{\alpha}, \, c_{\alpha}>0$. 
\item[A7.] $\kappa$ satisfies the inequality
\[
|\kappa(S_2)-\kappa(S_1)| \leq C(\theta(S_2)-\theta(S_1),S_2-S_1)^{\frac{1}{2}},  \hspace{0.5cm} \text{ for  }S_1,S_2\in [0,1].
\]

\item[A8.] $p \in L^{\infty}(0,T; W^{1,\infty}(\Omega))$.
\end{enumerate}
\begin{re}
 Assumption $A8$ can also  be derived directly  from some  mild assumptions given in \cites{chen2001degenerate,yeh2006holder}. 
\end{re}
\begin{prop}
With assumptions $A4-A6$, there are positive constants $\delta<1/2,$ $\tau_{i}>0$ and $C_{i}>0$ such that 
\begin{align*}
C_{1}S^{\tau_{w}} & \leq b(S)\leq C_{2}S^{\tau_{w}},\; \hspace{1.7cm}S\in[0,\delta],\\
C_{3} & \leq b(S)\leq C_{4},\; \hspace{2.3cm}S\in[\delta,1-\delta],\\
C_{5}(1-S)^{\tau_{o}} & \leq b(S)\leq C_{6}(1-S)^{\tau_{o}},\; \hspace{0.8cm} S\in[1-\delta,1], 
\end{align*}
where $\tau_{w}=1+\xi_{w}-\beta_{w}$,  and $\tau_{o}=1+\xi_{o}-\beta_{o}$. 
\end{prop}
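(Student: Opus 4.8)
The plan is to substitute the explicit power laws of Assumption $A4$ directly into the definition of $b$ from $A5$ and then read off the behaviour of each factor separately in the three regimes. Writing
\[
b(S)=\frac{\kappa_w(S)\,\kappa_o(S)}{\kappa(S)}\,\beta(S)
=\frac{C_wC_o\,S^{1+\xi_w}(1-S)^{1+\xi_o}}{C_wS^{1+\xi_w}+C_o(1-S)^{1+\xi_o}}\,\beta(S),
\]
and inserting the two-sided control $c_\alpha S^{-\beta_w}(1-S)^{-\beta_o}\le\beta(S)\le C_\alpha S^{-\beta_w}(1-S)^{-\beta_o}$ from $A6$, the product of the explicit powers collapses to $S^{1+\xi_w-\beta_w}(1-S)^{1+\xi_o-\beta_o}=S^{\tau_w}(1-S)^{\tau_o}$, up to the denominator $\kappa(S)$ and multiplicative constants. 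So the entire proof reduces to showing that, in each of the three $S$-intervals, every factor other than the one declared ``governing'' is bounded above and below by positive constants, and in particular that $\kappa(S)$ is.

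First I would fix an arbitrary $\delta<1/2$. On $[0,\delta]$ one has $1-S\in[1/2,1]$, so both $(1-S)^{1+\xi_o}$ and $(1-S)^{-\beta_o}$ lie between fixed positive constants; the lower bound $\kappa(S)\ge\kappa_o(S)=C_o(1-S)^{1+\xi_o}\ge C_o(1/2)^{1+\xi_o}>0$ together with the crude upper bound $\kappa(S)\le C_w\delta^{1+\xi_w}+C_o$ then trap the denominator between positive constants. Every factor except $S^{1+\xi_w}\cdot S^{-\beta_w}=S^{\tau_w}$ is therefore pinned between positive constants, yielding $C_1S^{\tau_w}\le b(S)\le C_2S^{\tau_w}$ after absorbing everything into $C_1,C_2$. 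The interval $[1-\delta,1]$ is handled by the mirror-image argument: now $S\in[1/2,1]$, the factors involving $S$ are two-sidedly bounded, $\kappa(S)\ge\kappa_w(S)\ge C_w(1/2)^{1+\xi_w}>0$ bounds the denominator from below, and the governing factor is $(1-S)^{1+\xi_o-\beta_o}=(1-S)^{\tau_o}$. On the middle interval $[\delta,1-\delta]$ both $S$ and $1-S$ are bounded away from $0$ and $1$, so the numerator, the denominator, and $\beta(S)$ are each simultaneously bounded above and below by positive constants; hence so is $b(S)$, giving $C_3\le b(S)\le C_4$.

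I expect no deep obstacle here: the argument is essentially bookkeeping, and the one point requiring genuine care is the two-sided control of $\kappa(S)=C_wS^{1+\xi_w}+C_o(1-S)^{1+\xi_o}$, where in each end regime I retain only the non-vanishing summand for the lower bound and estimate the whole sum crudely from above; the constants $C_1,\dots,C_6$ then come out as explicit products of $C_w,C_o,c_\alpha,C_\alpha$ and powers of $\delta$ and $1/2$. One consistency check worth flagging is the asserted positivity $\tau_w,\tau_o>0$: this is not automatic from $A4$--$A6$ as literally stated but amounts to requiring $\beta_w<1+\xi_w$ and $\beta_o<1+\xi_o$, which is compatible with—and sharper than—the global bound $b(S)\le C_0$ of $A5$, the latter only forcing $\tau_w,\tau_o\ge0$.
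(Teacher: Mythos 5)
Your proof is correct. Note that the paper itself states this proposition \emph{without} proof (it is presented as a routine consequence of A4--A6, following \cite{chen2001degenerate}), so there is no authorial argument to compare against; your write-up supplies exactly the verification the paper omits, and it is the natural one: substitute the power laws of A4 and the two-sided bound of A6 into $b=\kappa_w\kappa_o\beta/\kappa$, reduce everything to $S^{\tau_w}(1-S)^{\tau_o}/\kappa(S)$ up to constants, and then check that $\kappa$ and the non-governing factors are trapped between positive constants on each of the three intervals. Your handling of the denominator (keeping only the non-vanishing summand $\kappa_o$ near $S=0$, respectively $\kappa_w$ near $S=1$, for the lower bound) is the one step that needs care, and you do it correctly. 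Your closing remark is also a genuine and worthwhile observation: strict positivity of $\tau_w=1+\xi_w-\beta_w$ and $\tau_o=1+\xi_o-\beta_o$ does not follow from A4--A6 as literally stated --- A5 only forces $\tau_w,\tau_o\ge 0$ via boundedness of $b$ near the endpoints --- so the proposition implicitly assumes $\beta_w<1+\xi_w$ and $\beta_o<1+\xi_o$, a hypothesis the paper leaves unstated.
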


With the aforementioned assumptions, we  arrive at the following lemma, which will be important for our subsequent analysis.
\begin{lem} 
\label{lem:3.3}
 Assuming  $A4-A6$, for any $S_{1},S_{2}\in[0,1],$  then  we have 
\[
\tilde{C}(S_{2}-S_{1})^{2+\tau}\leq(\theta(S_{2})-\theta(S_{1}))(S_{2}-S_{1})\leq C_{0}(S_{2}-S_{1})^{2}
\]
where $\tau=\max(\tau_{w},\tau_{o})$. 
\end{lem}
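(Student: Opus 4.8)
The plan is to use the fundamental theorem of calculus: since $\theta' = b$ by the definition \eqref{DefTheta} of $\theta$, we have $\theta(S_2)-\theta(S_1)=\int_{S_1}^{S_2} b(\xi)\,d\xi$. Throughout I may assume $S_1<S_2$ (the case $S_1=S_2$ is trivial, and the product is symmetric under interchange), and I set $d=S_2-S_1\in(0,1]$. The upper bound is immediate: assumption $A5$ gives $b(\xi)\le C_0$ pointwise, so $\theta(S_2)-\theta(S_1)\le C_0\,d$, and multiplying by $d$ yields the right-hand inequality with the same constant $C_0$.

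For the lower bound, the first step is to collapse the three-region estimates of the preceding Proposition into one clean pointwise bound. Since $\tau=\max(\tau_w,\tau_o)$ and $0<\xi<1$ forces $\xi^{\tau_w}\ge\xi^{\tau}$ and $(1-\xi)^{\tau_o}\ge(1-\xi)^{\tau}$, and since boundedness of $b$ (assumption $A5$) forces $\tau_w,\tau_o\ge 0$ and hence $\tau\ge 0$, the Proposition produces a constant $c>0$ with
\[
b(\xi)\ge c\,\big(\min(\xi,1-\xi)\big)^{\tau},\qquad \xi\in[0,1].
\]
On $[0,\delta]$ and $[1-\delta,1]$ this follows from the degenerate bounds $b\ge C_1\xi^{\tau_w}$ and $b\ge C_5(1-\xi)^{\tau_o}$, while on the middle region it follows from $b\ge C_3$ together with $\min(\xi,1-\xi)^{\tau}\le 2^{-\tau}$. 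It then remains to prove the integral estimate $\int_{S_1}^{S_2}\big(\min(\xi,1-\xi)\big)^{\tau}\,d\xi\ge c'\,d^{1+\tau}$, after which multiplying by $d$ gives $\tilde{C}(S_2-S_1)^{2+\tau}$ with $\tilde{C}=c\,c'$.

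I would establish that integral estimate through the elementary inequality $\beta^{p}-\alpha^{p}\ge(\beta-\alpha)^{p}$ for $0\le\alpha\le\beta$ and $p\ge 1$, applied with $p=1+\tau$. If $S_2\le 1/2$, then $\min(\xi,1-\xi)=\xi$ on the whole interval and $\int_{S_1}^{S_2}\xi^{\tau}\,d\xi=(S_2^{1+\tau}-S_1^{1+\tau})/(1+\tau)\ge d^{1+\tau}/(1+\tau)$; the case $S_1\ge 1/2$ is symmetric after the substitution $\eta=1-\xi$. If the interval straddles the midpoint, $S_1<1/2<S_2$, I split at $1/2$, apply the two previous estimates to the pieces of lengths $a=1/2-S_1$ and $b=S_2-1/2$, and then invoke convexity of $x\mapsto x^{1+\tau}$ in the form $a^{1+\tau}+b^{1+\tau}\ge 2^{-\tau}(a+b)^{1+\tau}$ to recover $d^{1+\tau}$. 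Collecting constants gives $c'=2^{-\tau}/(1+\tau)$.

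The main obstacle is the lower bound near the degenerate endpoints $\xi=0,1$, where $b$ vanishes like a positive power of $\min(\xi,1-\xi)$; this degeneration is precisely what forces the exponent up from $2$ to $2+\tau$, and it is why the straddling case requires the convexity step rather than a single application of the power inequality. By comparison the upper bound and the pointwise reduction are routine, so the estimate near the two degenerate boundaries is where essentially all the work lies.
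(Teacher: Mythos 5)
Your proof is correct and takes essentially the same route as the paper's: both write $\theta(S_2)-\theta(S_1)=\int_{S_1}^{S_2}b(\xi)\,d\xi$, bound $b$ above by $C_0$ (assumption A5) and below by a degenerate power vanishing at the endpoints, and thereby reduce the lemma to the integral estimate $\int_{S_1}^{S_2}b \ge \tilde C\,(S_2-S_1)^{1+\tau}$. The only difference is one of completeness: the paper asserts that integral lower bound without justification, whereas you actually prove it via the $\bigl(\min(\xi,1-\xi)\bigr)^{\tau}$ reduction, the superadditivity inequality $\beta^{p}-\alpha^{p}\ge(\beta-\alpha)^{p}$ for $p=1+\tau\ge 1$, and the convexity step for intervals straddling $1/2$, so your write-up supplies precisely the step the paper leaves to the reader.
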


\begin{proof}
Without loss of generality,    assume $S_{2}\geq S_{1}$, then by the definition of $\theta$ in \eqref{DefTheta}, we have 
\begin{align*}
(\theta(S_{2})-\theta_{1}(S_{1})) & =\int_{S_{1}}^{S_{2}}b(S)dS\leq C_{0}\int_{S_{1}}^{S_{2}}dS\,,\\
(\theta(S_{2})-\theta_{1}(S_{1})) & \geq c\int_{S_{1}}^{S_{2}}(1-S)^{\tau_{o}}S^{\tau_{w}}\,, 
\end{align*}
and therefore 
\[
\tilde{C}(S_{2}-S_{1})^{1+\tau}\leq|\theta(S_{2})-\theta(S_{1})|\leq C_{0}(S_{2}-S_{1})\,.
\]
\end{proof}

We now state the well-posedness result for the model  \eqref{eq:pressure_eq2} and \eqref{eq:S_eq2}, equipped with Neumann boundary conditions for  $P$ and $S$ and initial condition for $S$,  although   the results can be extended to more  general  boundary conditions.  The complete proof  can be found in the above mentioned references. Since some of the proofs are related to the techniques we used in our main theorem, we will put the proof of some lemmas in the appendix. Readers may refer to \cite{chen2001degenerate} for further details.

\begin{thm}[\textbf{Existence and uniqueness of weak solutions of the two-phase  flow model}]\label{Thm2Phase}

\cite{chen2001degenerate} Let  $\, \Omega \subset \mathbb{R}^{d}\,$ for $d \in \{2,3\}$ be a connected Lipschitz domain. Assume 
 $ S(0,\cdot) \in L^2(\Omega)$, $S$  and $P$ with homogeneous  Neumann boundary conditions in \eqref{eq:pressure_eq2} and \eqref{eq:S_eq2}.    Problems \eqref{eq:pressure_eq2} and \eqref{eq:S_eq2}  have a unique weak solution satisfying for
all $T > 0$
 \[
\partial_{t}S_{w}\in L^{2}(0,T;H^{-1}(\Omega)). 
\]
Moreover
\begin{equation}\label{Weak2phase}
   \begin{split}
       \int_{\Omega}\kappa(S_{w})K \, \nabla P \cdot\nabla w & =\int_{\Omega}q_{t}w, \;\quad \hspace{0.4cm} \forall w\in H^{1}(\Omega)\,,\\
\int_{0}^{T}\int_{\Omega}\Big(\partial_{t}S_{w}v+K(\nabla\theta(S_{w})+\kappa_{w}\nabla P)\cdot\nabla v\Big) & =\int_{0}^{T}\int_{\Omega}q_{w}v, \;\quad \forall v\in L^{2}(0,T;H^{1}(\Omega)).
   \end{split} 
\end{equation}
\end{thm}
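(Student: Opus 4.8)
The plan is to treat the coupled elliptic--parabolic system by first removing the degeneracy of the diffusion and decoupling the pressure from the saturation, then recovering the original problem through a compactness argument, and finally establishing uniqueness in the weak norm $\|\cdot\|_{V_0^*}$. First I would regularize the saturation equation by replacing $\theta$ with $\theta_\varepsilon(S):=\theta(S)+\varepsilon S$, so that the diffusion coefficient $b_\varepsilon:=b+\varepsilon\ge\varepsilon>0$ and the equation becomes uniformly (rather than degenerately) parabolic, while truncating $\kappa$ away from zero so that, by the model bounds $\underline{\kappa}\le\kappa_w+\kappa_o\le\overline{\kappa}$ and $\underline{K}\le\xi^{T}K\xi\le\overline{K}$, the form $(u,v)\mapsto\int_\Omega\kappa(S)K\nabla u\cdot\nabla v$ is uniformly coercive on $V_0$. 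For a frozen saturation $S$, the pressure equation \eqref{eq:pressure_eq2} with homogeneous Neumann data is then solved on $V_0$ by Lax--Milgram, using $q_t\in H^{-1}(\Omega)$ from A3 and the mean-zero constraint $\int_\Omega P=0$; this defines a solution operator $S\mapsto P[S]$.

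Next I would set up a fixed-point scheme for the decoupled regularized system. Given $\bar S$, compute $P[\bar S]$, insert it into \eqref{eq:S_eq2} to obtain the now non-degenerate semilinear parabolic problem $\partial_t S-\nabla\cdot\big(K\nabla\theta_\varepsilon(S)+\kappa_w(\bar S)K\nabla P[\bar S]\big)=q_w$, and solve this for $S$ by a Faedo--Galerkin discretization in space followed by passage to the limit in the Galerkin dimension. This produces a map $\Phi:\bar S\mapsto S$; I would verify that $\Phi$ is continuous and compact on a suitable ball of $L^2(0,T;L^2(\Omega))$ (compactness coming from a uniform $H^1$ bound on $\theta_\varepsilon(S)$ together with an $H^{-1}$ bound on $\partial_t S$ via Aubin--Lions) and invoke Schauder's fixed-point theorem to obtain a solution $(P_\varepsilon,S_\varepsilon)$ of the regularized coupled problem.

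The core of the argument is a family of a priori estimates uniform in $\varepsilon$. Testing the saturation equation with $S_\varepsilon$ and with $\theta_\varepsilon(S_\varepsilon)$, and using Lemma~\ref{lem:3.3} to control $(\theta(S_2)-\theta(S_1))(S_2-S_1)$ from below, yields bounds on $\|S_\varepsilon\|_{L^\infty(0,T;L^2)}$ and on $\|K^{1/2}\nabla\theta(S_\varepsilon)\|_{L^2(0,T;L^2)}$; testing the pressure equation with $P_\varepsilon$ and invoking A8 ($\nabla P_\varepsilon\in L^\infty$) controls the cross term $\kappa_w K\nabla P_\varepsilon$, and comparison against arbitrary $v\in L^2(0,T;H^1)$ then gives $\|\partial_t S_\varepsilon\|_{L^2(0,T;H^{-1})}\le C$ independent of $\varepsilon$. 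With $\{\theta(S_\varepsilon)\}$ bounded in $L^2(0,T;H^1)$ and $\{\partial_t S_\varepsilon\}$ bounded in $L^2(0,T;H^{-1})$, the Aubin--Lions--Simon lemma delivers strong $L^2(0,T;L^2)$ convergence of a subsequence of $S_\varepsilon$; continuity and monotonicity of $\theta$ and $\kappa_w$ then let me pass to the limit in every nonlinear term and recover the weak formulation \eqref{Weak2phase} with the asserted regularity $\partial_t S_w\in L^2(0,T;H^{-1}(\Omega))$.

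For uniqueness I would take two solutions $(P_1,S_1)$ and $(P_2,S_2)$ sharing the same data and test the difference of the saturation equations with $G(S_1-S_2)$, working in the $\|\cdot\|_{V_0^*}$ norm of Definition~\ref{Greenfun}. By the defining identity $a(G(u),v)=(u,v)$, the time term becomes $\tfrac12\frac{d}{dt}\|S_1-S_2\|_{V_0^*}^2$ and the leading diffusion term collapses to the monotone quantity $(\theta(S_1)-\theta(S_2),S_1-S_2)$, which Lemma~\ref{lem:3.3} bounds below by a power of $\|S_1-S_2\|$. To handle the pressure cross term I would subtract the two pressure equations, test with $P_1-P_2$, and use the uniform ellipticity together with A8 and the Hölder-type assumption A7 to obtain $\|\nabla(P_1-P_2)\|\le C\,(\theta(S_1)-\theta(S_2),S_1-S_2)^{1/2}$, which is precisely the square root of the dissipation supplied by the diffusion term, so the cross term can be absorbed. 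A Gronwall argument in the $V_0^*$ norm then forces $S_1=S_2$, whence $P_1=P_2$. I expect this uniqueness step to be the principal difficulty: the degeneracy of $\theta$ rules out testing the difference directly in $H^1$, and it is only the combination of A7 with Lemma~\ref{lem:3.3} — matching the pressure perturbation to the weak coercivity of the degenerate diffusion — that closes the estimate.
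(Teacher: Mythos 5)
Your uniqueness argument is, in substance, the paper's own: you test the difference of the saturation equations with the Green's operator of Definition~\ref{Greenfun}, extract the monotone dissipation $(\theta_1-\theta_2,S_1-S_2)$, control it from below via Lemma~\ref{lem:3.3}, handle the pressure coupling through Lemma~\ref{lem:eq1_bound} together with A7--A8, and close with Gronwall --- exactly the chain Lemma~\ref{lem:eq2_bound} $\to$ Theorem~\ref{thm:3.11}. (Your observation that, with identical data, the mean $\pi(S_1-S_2)$ remains zero so that $G(S_1-S_2)$ is a legitimate test function is a correct simplification of the paper's separate treatment of the mean part via the test function $p(\pi(S_2-S_1))^{p-1}$.) For existence, however, you take a genuinely different route: the paper, following \cite{chen2001degenerate}, semi-discretizes in time, constructing Rothe approximations $P^{\eta}\in I_{0,\eta}(H^{1}(\Omega))$, $\theta^{\eta}\in I_{1,\eta}(H^{1}(\Omega))$ solving \eqref{DisSol}, proving the uniform bound of Lemma~\ref{lem:3.7}, and passing to the limit $\eta\to0$; you instead regularize the degeneracy ($\theta_\varepsilon=\theta+\varepsilon\,\mathrm{id}$), decouple by Lax--Milgram and a Schauder fixed point at fixed $\varepsilon$, and then send $\varepsilon\to0$. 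Your route buys a uniformly parabolic problem at every stage, so classical Faedo--Galerkin theory applies verbatim; the price is paid when the regularization must be removed, and that is where your proposal has a real gap.

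The gap is the sentence asserting that the uniform bounds on $\theta(S_\varepsilon)$ in $L^{2}(0,T;H^{1}(\Omega))$ and on $\partial_t S_\varepsilon$ in $L^{2}(0,T;H^{-1}(\Omega))$ let the Aubin--Lions--Simon lemma produce a strongly convergent subsequence of $S_\varepsilon$. Aubin--Lions requires the spatial and the temporal bound on the \emph{same} sequence, and uniformly in $\varepsilon$ you have neither: the spatial bound on $S_\varepsilon$ itself degenerates (only $\|\nabla S_\varepsilon\|\le\varepsilon^{-1}\|\nabla\theta_\varepsilon(S_\varepsilon)\|$ is available), and there is no estimate on $\partial_t\theta(S_\varepsilon)$. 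Inside the fixed-point argument at fixed $\varepsilon$ your use of Aubin--Lions is fine; it is precisely the limit $\varepsilon\to0$ that fails, and this mismatch is the central technical difficulty of degenerate-parabolic existence proofs --- it is the content of the compactness corollary following Lemma~\ref{lem:3.7}, which the paper cites rather than re-proves. The repair is a monotonicity-based (Alt--Luckhaus type) compactness argument: from the two uniform bounds derive the time-translate estimate $\int_0^{T-h}\int_\Omega\big(\theta(S_\varepsilon(t+h))-\theta(S_\varepsilon(t))\big)\big(S_\varepsilon(t+h)-S_\varepsilon(t)\big)\,dx\,dt\le C\sqrt{h}$; the lower bound of Lemma~\ref{lem:3.3} converts this into an $L^{2+\tau}$ modulus of continuity in time for $S_\varepsilon$, while the same lemma shows the inverse map $\mathbb{T}$ is H\"older continuous with exponent $1/(1+\tau)$, so the $H^{1}$ spatial compactness of $\theta(S_\varepsilon)$ transfers to $S_\varepsilon$; Kolmogorov--Riesz then yields strong convergence, and pointwise a.e.\ convergence of $S_\varepsilon=\mathbb{T}(\theta(S_\varepsilon))$ lets you pass to the limit in $\kappa(S_\varepsilon)$, $\kappa_w(S_\varepsilon)$, $\theta(S_\varepsilon)$. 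This mirrors the paper's route of first obtaining strong convergence of $\theta^{\eta}$ and then a.e.\ convergence of $S^{\eta}=\mathbb{T}(\theta^{\eta})$. Two smaller corrections: truncating $\kappa$ away from zero is unnecessary, since $\kappa=\kappa_w+\kappa_o\ge\underline{\kappa}>0$ is already assumed; and you should not invoke A8 for the regularized pressures $P_\varepsilon$ (A8 concerns the limit solution) --- nor do you need to, since the uniform a priori bounds only require $\|\nabla P_\varepsilon\|_{L^{2}}\le C$, which follows from testing the pressure equation with $P_\varepsilon$ as in the appendix proof of Lemma~\ref{lem:3.7}; reserve A8 for the uniqueness step, where the paper actually uses it.
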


The proof of existence starts with defining a discrete time solution of the problem.  Denote $$\partial^{\eta}v(t)=\cfrac{v(t+\eta)-v(t)}{\eta},$$
for any function $v(t)$, where    $\eta=T/N$. 
 Next define 
 
 \begin{align*}
 I_{i,\eta}(V)  = \Big\{ v\in L^{\infty}(0,T;V):  v\; & \text{is piece-wise   polynomial with degree } \\
  &  \text{$i$ in time on each sub-interval}  \, \, J_{i}\subset J \Big\},   
\end{align*}
where $J_{i}=(t_{i},t_{i+1}]$, $t_{i}=i\eta$ and $t_{N}=T$. We
then define the discrete time solution $P^{\eta}\in I_{0,\eta}(H^{1}(\Omega))$
with $$\int_{\Omega}P^{\eta}(t,\cdot)=0,  \hspace{0.5cm}  \text{and} \hspace{0.5cm} \theta^{\eta}\in I_{1,\eta}(H^{1}(\Omega)),$$
which satisfy  
\begin{equation}\label{DisSol}
\begin{split}
    \int_{0}^{T}\int_{\Omega}\kappa(S_{w})K\nabla P^{\eta}\cdot\nabla w & =\int_{0}^{T}\int_{\Omega}q_{t}w\;\quad \forall w\in I_{0,h}(H^{1}(\Omega))\,,\\
\int_{0}^{T}\int_{\Omega}\Big(\partial^{\eta}(T(\theta^{\eta}))v+\kappa_{w}K\nabla P^{\eta}\cdot\nabla v+K\nabla\theta^{\eta}\cdot\nabla v\Big) & =\int_{0}^{T}\int_{\Omega}q_{w}v\;\quad \forall v\in I_{0,h}(H^{1}(\Omega))\,.
\end{split}
\end{equation}

 We now need  the following  result, which are stated  without proof.
\begin{lem}
\cite{chen2001degenerate} The discrete problems   \eqref{DisSol} are  well-posed . 
\end{lem}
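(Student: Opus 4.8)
The plan is to exploit the piecewise-polynomial-in-time structure of the trial and test spaces to reduce \eqref{DisSol} to a finite sequence of stationary single time-step problems, and then to solve each of these by combining the Lax--Milgram lemma for the pressure with monotone-operator theory for the saturation, the coupling between the two being resolved by a fixed-point argument. First I would fix an index $i$ and take test functions supported on the single subinterval $J_i=(t_i,t_{i+1}]$. Since $P^\eta$ is piecewise constant and $\theta^\eta$ piecewise linear in time and $\partial^\eta$ is the backward difference stencil, this choice turns \eqref{DisSol} into the coupled stationary system: find $P^i\in V_0$ and $\theta^i\in H^{1}(\Omega)$ with
\begin{align*}
\int_\Omega \kappa(\mathbb{T}(\theta^i))\,K\nabla P^i\cdot\nabla w &= \int_\Omega q_t\,w,\\
\tfrac1\eta\int_\Omega\big(\mathbb{T}(\theta^i)-\mathbb{T}(\theta^{i-1})\big)v + \int_\Omega K\nabla\theta^i\cdot\nabla v + \int_\Omega \kappa_w K\nabla P^i\cdot\nabla v &= \int_\Omega q_w\,v,
\end{align*}
where $\mathbb{T}(\theta^{i-1})$ is known from the previous step (or the initial datum when $i=0$). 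Solving these stationary problems successively for $i=0,1,\dots,N-1$ reconstructs the full discrete solution, so it suffices to prove well-posedness of one step.

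For a single step I would set up a fixed-point map $\Phi$ on the saturation. Given a candidate $\bar S\in L^2(\Omega)$ with $0\le\bar S\le1$, assumption that $\underline\kappa\le\kappa\le\overline\kappa$ together with the positive-definiteness of $K$ makes the pressure form $a_{\kappa(\bar S)}(P,w)=\int_\Omega\kappa(\bar S)K\nabla P\cdot\nabla w$ continuous and, on $V_0$, coercive (via the lower bounds and the Poincaré--Wirtinger inequality, cf.\ Definition~\ref{bilinear}); hence Lax--Milgram gives a unique $P\in V_0$. Feeding this $P$ and the frozen coefficient $\kappa_w(\bar S)$ into the saturation equation, I would read its left-hand side as a nonlinear operator $\theta\mapsto \tfrac1\eta\mathbb{T}(\theta)+\mathcal A\theta$ from $H^1(\Omega)$ to $H^{-1}(\Omega)$, with $\mathcal A$ the elliptic part induced by $K\nabla\theta\cdot\nabla$. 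This operator is monotone---$\mathbb{T}$ is nondecreasing because $\theta$ is increasing by \eqref{DefTheta}, and $\mathcal A$ is monotone and coercive---and demicontinuous, so the Browder--Minty theorem yields a unique $\theta^i$, whence $S^i=\mathbb{T}(\theta^i)$. Setting $\Phi(\bar S)=S^i$, I would obtain an invariant ball from the energy estimate got by testing the saturation equation with $v=\theta^i$ and the pressure equation with $w=P^i$, and deduce continuity and compactness of $\Phi$ from the resulting $H^1$ bound and the compact embedding $H^1\hookrightarrow L^2$; Schauder's fixed-point theorem then produces a solution of the coupled step.

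Finally, for uniqueness I would subtract two solutions and test the saturation equation with the difference $\theta^i-\tilde\theta^i$. The lower bound $\tilde C(S_2-S_1)^{2+\tau}\le(\theta(S_2)-\theta(S_1))(S_2-S_1)$ of Lemma~\ref{lem:3.3} converts the $\mathbb{T}$-term into genuine control of $\|S^i-\tilde S^i\|$, the ellipticity of $K$ controls $\nabla(\theta^i-\tilde\theta^i)$, and the Lipschitz dependence of $P^i$ on $\kappa(S)$ through Lax--Milgram stability closes the estimate for small $\eta$.

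The main obstacle is exactly this coupling combined with the degeneracy of $\theta$: since $b=\theta'$ vanishes at $S=0$ and $S=1$ by assumption A2, the inverse $\mathbb{T}$ is not Lipschitz, so a naive contraction argument is unavailable. One must instead lean on the monotonicity of $\mathbb{T}$ and the compactness of $\Phi$, identifying the nonlinear limits of $\kappa(\bar S)$ and $\mathbb{T}(\theta)$ by Minty's trick rather than by strong convergence of the coefficients.
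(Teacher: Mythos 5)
The first thing to note is that the paper contains no proof of this lemma to compare against: it is explicitly ``stated without proof'' and imported wholesale from \cite{chen2001degenerate}. Your overall architecture --- localizing the test functions in time to decouple \eqref{DisSol} into successive implicit single-step stationary problems, solving the pressure step by Lax--Milgram on $V_0$, treating the saturation step by monotone-operator theory, and resolving the coupling by a Schauder fixed point with Minty's trick to identify nonlinear limits --- is the standard route for this kind of degenerate system and is consistent in spirit with the cited reference, so the skeleton of your sketch is reasonable.

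There is, however, a genuine gap at the Browder--Minty step, and it is not the obstacle you flag at the end (the non-Lipschitz inverse $\mathbb{T}$) but a Neumann-boundary/coercivity one. The operator $A\theta=\tfrac1\eta\mathbb{T}(\theta)-\nabla\cdot(K\nabla\theta)$ on $H^{1}(\Omega)$ is monotone and hemicontinuous, but it is \emph{not} coercive: $\mathbb{T}$ takes values in $[0,1]$, so along constants $\theta\equiv c$ one has
\begin{equation*}
\langle A\theta,\theta\rangle=\tfrac1\eta\int_{\Omega}\mathbb{T}(c)\,c\;\leq\;\tfrac{|\Omega|}{\eta}\,|c|,
\end{equation*}
which grows only linearly in $\|\theta\|_{H^{1}}$, since the gradient term vanishes on constants and the bounded nonlinearity cannot compensate. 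Hence Browder--Minty does not apply as stated; for the same reason strict monotonicity fails ($\mathbb{T}$ is flat outside $[0,\theta(1)]$ and the elliptic part is blind to constants), so the claimed uniqueness of $\theta^{i}$ does not follow either. What is missing is the mass-compatibility structure of the Neumann problem: testing the saturation step with $v=1$ forces $\int_{\Omega}\mathbb{T}(\theta^{i})=\int_{\Omega}S^{i-1}+\eta\int_{\Omega}q_{w}$, and solvability requires this value to lie (strictly) in $(0,|\Omega|)$; likewise the pressure step requires $\langle q_{t},1\rangle=0$, which your Lax--Milgram argument on $V_{0}$ uses implicitly but which must be stated, since the test space in \eqref{DisSol} contains constants. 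With the strict range condition one can repair the step, e.g.\ by adding a vanishing regularization $\epsilon\theta$ and passing to the limit, by splitting $\theta=m+\vartheta$ with $\vartheta\in V_{0}$ and solving a scalar monotone equation for the mean $m$, or by recasting the step as minimization of the convex functional $\theta\mapsto\tfrac1\eta\int_{\Omega}\Psi(\theta)+\tfrac12\int_{\Omega}K\nabla\theta\cdot\nabla\theta-\langle f,\theta\rangle$ with $\Psi'=\mathbb{T}$, whose coercivity is exactly a Landesman--Lazer-type consequence of the strict inequality; uniqueness then additionally needs an $L^{\infty}$ (maximum-principle) bound keeping the solution in the range where $\mathbb{T}$ is strictly increasing, after which your subtraction argument with Lemma \ref{lem:3.3} and assumption A7 can close the estimate. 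Until this step is repaired, the fixed-point and uniqueness parts of your argument cannot be run.
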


\begin{lem}\label{lem:lem3}
\cite{chen2001degenerate} Let $d:\mathbb{R}\rightarrow\mathbb{R}$ be an increasing
function such that $d(0)=0$ and $\{c_{i}\}$ be a sequence of real
numbers. Then for any number $m>0$, 
\[
\sum_{k=1}^{m}(d(c_{k})-d(c_{k-1}))c_{k}\geq D(c_{m})-D(c_{0})\geq-D(c_{0})
\]
where 
\[
D(c)=\int_{0}^{c}(d(c)-d(\xi))d\xi.
\]
\end{lem}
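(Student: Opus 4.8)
The plan is to prove the statement by a telescoping argument built on a single per-step inequality. First I would reduce the claim to showing that, for every index $k$,
\[
(d(c_{k})-d(c_{k-1}))\,c_{k}\geq D(c_{k})-D(c_{k-1}).
\]
Once this is available, summing over $k=1,\dots,m$ makes the right-hand side telescope to $D(c_{m})-D(c_{0})$, which is exactly the first inequality in the statement. No properties of the sequence $\{c_i\}$ are used here beyond the fact that it is a sequence of reals, so the whole difficulty is concentrated in the single-step estimate.

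To verify the per-step inequality I would set $a=c_{k-1}$ and $b=c_{k}$ and expand $D$ using the equivalent form $D(c)=c\,d(c)-\int_{0}^{c}d(\xi)\,d\xi$. After cancelling the common term $b\,d(b)$ on both sides, the desired inequality collapses to the elementary estimate
\[
\int_{a}^{b}d(\xi)\,d\xi\geq (b-a)\,d(a).
\]
This is the only analytic input, and it is where monotonicity of $d$ enters: if $b\geq a$ then $d(\xi)\geq d(a)$ on $[a,b]$ and the bound is immediate, while if $b<a$ the inequality flips twice — the integral reverses orientation and simultaneously $d(\xi)\leq d(a)$ on $[b,a]$ — so it holds again. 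The main point requiring care is precisely this sign bookkeeping, since the $c_{k}$ are arbitrary reals and need not be ordered or nonnegative; I expect this case split to be the only delicate step, and everything else is algebraic cancellation.

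For the second inequality $D(c_{m})-D(c_{0})\geq -D(c_{0})$, it suffices to establish that $D(c)\geq 0$ for every real $c$. I would argue this straight from the definition: for $c\geq 0$ the integrand $d(c)-d(\xi)$ is nonnegative on $[0,c]$, so $D(c)\geq 0$; for $c<0$ the integrand is nonpositive on $[c,0]$, but reversing the orientation of $\int_{0}^{c}$ again gives $D(c)\geq 0$. Applying this with $c=c_{m}$ yields $D(c_{m})\geq 0$, which immediately gives the second inequality. Notably the argument needs neither convexity nor differentiability of $d$: monotonicity alone drives both the per-step estimate and the nonnegativity of $D$.
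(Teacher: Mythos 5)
Your proof is correct. Note that the paper itself states this lemma without proof (it is quoted directly from \cite{chen2001degenerate}, and the appendix proves only Lemmas \ref{lem:3.7}, \ref{lem:eq1_bound}, \ref{lem:eq2_bound} and Theorem \ref{thm:3.11}), so there is no in-paper argument to compare against; your write-up in fact fills a gap the paper leaves to the reference. Your route is the standard one for this classical estimate: reduce to the per-step inequality $(d(c_{k})-d(c_{k-1}))c_{k}\geq D(c_{k})-D(c_{k-1})$, which after expanding $D(c)=c\,d(c)-\int_{0}^{c}d(\xi)\,d\xi$ is exactly $\int_{c_{k-1}}^{c_{k}}\bigl(d(\xi)-d(c_{k-1})\bigr)\,d\xi\geq 0$, then telescope and conclude with $D(c_{m})\geq 0$. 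Your sign bookkeeping in the unordered case ($c_{k}<c_{k-1}$) and in the proof that $D\geq 0$ for negative arguments is handled correctly, and both steps use only monotonicity of $d$. A small bonus observation implicit in your argument: the hypothesis $d(0)=0$ is never needed for either inequality; it is inherited from the context in which the cited reference applies the lemma.
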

Next, we recall the following Lemma, which is important to our subsequent analysis.
\begin{lem} \label{lem:3.7}\cite{chen2001degenerate}
The discrete solution $P^{\eta}$, $\theta^{\eta}$ satisfy 
\[
\|P^{\eta}\|_{L^{\infty}(0,T;H^{1}(\Omega))}+\|\theta^{\eta}\|_{L^{2}(0,T;H^{1}(\Omega))}\leq C \, , 
\]
where $C$ is independent of $\eta$. 
\end{lem}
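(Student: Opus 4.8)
The plan is to run a discrete energy argument: test each line of \eqref{DisSol} against its own unknown, exploit the uniform ellipticity of the weights $\kappa(S_w)K$ and $K$, and close the degenerate time-derivative term with the convexity inequality of Lemma \ref{lem:lem3}. Since every test space $I_{0,\eta}(H^1(\Omega))$ is piecewise constant in time, the time integrals decouple over the subintervals $J_i$, so the elliptic estimate can be obtained slice by slice and the parabolic estimate by summing over slices.

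First I would handle the pressure. Taking $w=P^{\eta}$ in the first line of \eqref{DisSol}, on each time slice the identity reduces to $\int_{\Omega}\kappa(S_w)K\,\nabla P^{\eta}\cdot\nabla P^{\eta}=\int_{\Omega}q_{t}P^{\eta}$. The structural lower bounds $\kappa\ge\underline{\kappa}$ and $\xi^{T}K\xi\ge\underline{K}$ give coercivity $\underline{\kappa}\,\underline{K}\,\|\nabla P^{\eta}\|^{2}\le\int_{\Omega}q_{t}P^{\eta}$; combining this with the zero-mean constraint $\int_{\Omega}P^{\eta}(t,\cdot)=0$, the Poincar\'e inequality, and $q_{t}=q_{o}+q_{w}\in L^{\infty}(0,T;H^{-1}(\Omega))$ from $A3$, I obtain a bound on $\|\nabla P^{\eta}\|$ that is pointwise in time and independent of $\eta$. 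Taking the supremum in $t$ then controls $\|P^{\eta}\|_{L^{\infty}(0,T;H^{1}(\Omega))}$.

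Next I would test the second line of \eqref{DisSol} with $v=\theta^{\eta}$. The diffusion term $\int_{0}^{T}\int_{\Omega}K\nabla\theta^{\eta}\cdot\nabla\theta^{\eta}\ge\underline{K}\,\|\nabla\theta^{\eta}\|_{L^{2}(0,T;L^{2})}^{2}$ is the coercive quantity I wish to retain. The cross term $\int_{0}^{T}\int_{\Omega}\kappa_{w}K\nabla P^{\eta}\cdot\nabla\theta^{\eta}$ is treated by Cauchy--Schwarz and Young's inequality, using $\kappa_{w}\le\overline{\kappa}$ and $\xi^{T}K\xi\le\overline{K}$, so as to split off a small multiple of $\|\nabla\theta^{\eta}\|^{2}$ to be absorbed into the diffusion term while the remainder is controlled by the already-bounded $\|\nabla P^{\eta}\|^{2}$; the forcing $\int_{0}^{T}\int_{\Omega}q_{w}\theta^{\eta}$ is bounded in the same way via $q_{w}\in L^{\infty}(0,T;H^{-1}(\Omega))$. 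The $L^{2}$-in-space part of the $H^{1}$ norm of $\theta^{\eta}$ is automatic, since the discrete saturations stay in $[0,1]$ and $\theta$ is bounded on $[0,1]$ (by $A5$ the integrand $b$ satisfies $b\le C_{0}$, so $\theta$ is Lipschitz and bounded there), giving $\theta^{\eta}\in L^{\infty}$ uniformly.

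The hard part will be the degenerate discrete time-derivative term $\int_{0}^{T}\int_{\Omega}\partial^{\eta}(\mathbb{T}(\theta^{\eta}))\,\theta^{\eta}$: because the saturation enters nonlinearly through the inverse map $\mathbb{T}$, there is no exact discrete chain rule. The idea is that the piecewise structure makes this term telescope into a spatial integral of $\sum_{k}\big(\mathbb{T}(\theta^{\eta}_{k})-\mathbb{T}(\theta^{\eta}_{k-1})\big)\theta^{\eta}_{k}$, where $\theta^{\eta}_{k}$ denotes the nodal values of $\theta^{\eta}$, which is exactly the form to which Lemma \ref{lem:lem3} applies with $d=\mathbb{T}$ and $c_{k}=\theta^{\eta}_{k}$. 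Here $\mathbb{T}$ is increasing (being the inverse of the monotone $\theta$) with $\mathbb{T}(0)=0$ since $\theta(0)=0$, so the hypotheses hold and the lemma yields the one-sided bound $\ge D(\theta^{\eta}_{N})-D(\theta^{\eta}_{0})\ge-D(\theta(S_{w}(0,\cdot)))$, i.e. the time-derivative term is bounded below by a quantity depending only on the initial saturation, which is finite because $\mathbb{T}$ and $\theta$ are bounded on $[0,1]$ and hence $D(\theta(S_w(0,\cdot)))$ is controlled by $\|S_{w}(0,\cdot)\|\in L^{2}$. Moving this initial-data term to the right-hand side and absorbing the small $\|\nabla\theta^{\eta}\|^{2}$ contributions into the coercive diffusion term then closes the estimate, with every constant depending only on $q_{t},q_{w},S_{w}(0,\cdot)$ and the structural bounds $\underline{\kappa},\overline{\kappa},\underline{K},\overline{K},C_{0}$, and therefore independent of $\eta$.
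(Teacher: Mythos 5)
Your pressure estimate and the core of your saturation estimate coincide with the paper's proof: you test the first equation of \eqref{DisSol} with $P^{\eta}$ and use coercivity, the zero-mean constraint and $q_{t}\in L^{\infty}(0,T;H^{-1}(\Omega))$; you then test the second with $\theta^{\eta}$, absorb the cross term by Young's inequality, and bound the degenerate discrete time-derivative from below via Lemma \ref{lem:lem3} applied to $d=\mathbb{T}$, exactly as the paper does.

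The genuine gap is your claim that the $L^{2}$-in-space part of $\|\theta^{\eta}\|_{H^{1}}$ is ``automatic'' because $\theta^{\eta}\in L^{\infty}$ uniformly. That inference is circular. In the scheme \eqref{DisSol}, $\theta^{\eta}\in I_{1,\eta}(H^{1}(\Omega))$ is the primary unknown, and the saturation is defined afterwards by $S^{\eta}:=\mathbb{T}(\theta^{\eta})$; the inclusion $S^{\eta}\in[0,1]$ holds by construction of the (extended) inverse map, but you may only write $\theta^{\eta}=\theta(S^{\eta})$ --- and hence conclude $0\leq\theta^{\eta}\leq\theta(1)\leq C_{0}$ --- if $\theta^{\eta}$ already takes values in the range $[0,\theta(1)]$ of $\theta$, which is precisely what you are trying to prove (if $\theta^{\eta}>\theta(1)$ somewhere, then $\theta(\mathbb{T}(\theta^{\eta}))=\theta(1)\neq\theta^{\eta}$ there). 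An $H^{1}$-Galerkin scheme of this kind has no discrete maximum principle, so no pointwise bound on $\theta^{\eta}$ is available at this stage. The failure propagates: both the $L^{2}$ part of the $H^{1}$ norm in the conclusion and your duality bound on the forcing, $\int_{0}^{T}\int_{\Omega}q_{w}\theta^{\eta}\leq\|q_{w}\|_{H^{-1}}\|\theta^{\eta}\|_{H^{1}}$, require exactly this missing control, so the energy estimate does not close. The paper supplies it by a different mechanism: testing the saturation equation with constants yields the mass-balance identity
\[
\int_{\Omega}\mathbb{T}(\theta^{\eta})(t,\cdot)-\int_{\Omega}\mathbb{T}(\theta^{\eta})(0,\cdot)=\int_{0}^{t}\int_{\Omega}q_{w}\,,
\]
from which it deduces $\big|\int_{\Omega}\theta^{\eta}(t,\cdot)\big|\leq C\big(\big|\int_{\Omega}\theta^{\eta}(0,\cdot)\big|+\big|\int_{0}^{t}\int_{\Omega}q_{w}\big|\big)$, and then uses the Poincar\'e-type bound $\|\theta^{\eta}(t,\cdot)\|_{H^{1}}^{2}\leq C\big(\|\theta^{\eta}(t,\cdot)\|_{V_{0}}^{2}+\big|\int_{\Omega}\theta^{\eta}(t,\cdot)\big|^{2}+\big|\int_{0}^{t}\int_{\Omega}q_{w}\big|^{2}\big)$ to upgrade the seminorm control coming from the energy estimate to the full $H^{1}$ bound. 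To repair your argument, replace the $L^{\infty}$ claim by this mean-value (conservation) step.
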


From  Lemma \ref{lem:lem3}, and since $P^{\eta}$ and $\theta^{\eta}$ remain bounded, we  have the following corollary. 
\begin{cor} \cite{chen2001degenerate}
Let  $P^{\eta}$ and $\theta^{\eta}$ be solutions to \eqref{DisSol}, then
\begin{enumerate}
\item For any $2\leq r<\infty$,  there exists a subsequence $P^{\eta}\rightharpoonup p$
weakly in $L^{r}(0,T;H^{1}(\Omega))$,  and $\theta^{\eta}\rightharpoonup\theta$
weakly in $L^{2}(0,T;H^{1}(\Omega))$.
\item There is a subsequence $\theta^{\eta}\rightarrow\theta$ strongly
in $L^{2}(0,T;L^{2}(\Omega))$. 
\item  There is a subsequence $\theta^{\eta}\rightarrow\theta$ strongly
in $L^{2}(0,T;H^{1-\alpha})$ for any $0< \alpha <1/2$,  and $S^{\eta}\rightarrow S$
pointwise a.e. on $(0,T]\times\Omega$ where $S^{\eta}=T(\theta^{\eta})$.
\end{enumerate}
 \end{cor}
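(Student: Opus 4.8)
The three assertions follow from the uniform bound of Lemma \ref{lem:3.7} together with a compactness argument, and I would organize the proof into the three parts below.

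\emph{Part (1): weak compactness.} Since $(0,T)$ has finite measure, the bound $\|P^{\eta}\|_{L^{\infty}(0,T;H^{1})}\leq C$ from Lemma \ref{lem:3.7} gives $\|P^{\eta}\|_{L^{r}(0,T;H^{1})}\leq C\,T^{1/r}$ for every $2\leq r<\infty$, so $\{P^{\eta}\}$ is bounded in the reflexive space $L^{r}(0,T;H^{1}(\Omega))$; likewise $\{\theta^{\eta}\}$ is bounded in $L^{2}(0,T;H^{1}(\Omega))$. By reflexivity (Banach--Alaoglu) I would extract subsequences $P^{\eta}\rightharpoonup p$ and $\theta^{\eta}\rightharpoonup\theta$, using a diagonal argument so a single subsequence serves simultaneously for both variables and for all rational $r$.

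\emph{Part (2): strong $L^{2}$ compactness.} This is the crux. I would invoke the Riesz--Fr\'echet--Kolmogorov (equivalently Aubin--Lions--Simon) criterion, for which it suffices to control spatial and temporal translates of $\theta^{\eta}$ uniformly in $\eta$. The spatial translates are immediate from the $L^{2}(0,T;H^{1})$ bound. For the temporal translates I would first use that $\theta$ is Lipschitz: Assumption $A5$ gives $\theta'=b\leq C_{0}$, so for all $S_{1},S_{2}$,
\[
(\theta(S_{2})-\theta(S_{1}))^{2}\leq C_{0}\,(\theta(S_{2})-\theta(S_{1}))(S_{2}-S_{1}),
\]
whence, with $S^{\eta}=\mathbb{T}(\theta^{\eta})$,
\[
\|\theta^{\eta}(t+s)-\theta^{\eta}(t)\|^{2}\leq C_{0}\int_{\Omega}\big(\theta^{\eta}(t+s)-\theta^{\eta}(t)\big)\big(S^{\eta}(t+s)-S^{\eta}(t)\big).
\]
Integrating in $t$, I would estimate the right-hand side by testing the discrete saturation equation in \eqref{DisSol} against $\theta^{\eta}(t+s)-\theta^{\eta}(t)$ and summing the resulting telescoping differences of $\mathbb{T}(\theta^{\eta})$; here the Abel summation identity of Lemma \ref{lem:lem3} is exactly what converts the discrete time-difference into a controlled quantity, yielding $\int_{0}^{T-s}\|\theta^{\eta}(t+s)-\theta^{\eta}(t)\|^{2}\,dt\leq C\,s$ with $C$ independent of $\eta$. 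This uniform modulus of continuity in time, combined with the spatial bound, gives relative compactness in $L^{2}(0,T;L^{2}(\Omega))$, and the strong limit is identified with the weak limit $\theta$ from Part (1).

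\emph{Part (3): interpolation and a.e.\ convergence.} For the $H^{1-\alpha}$ statement I would interpolate between the strong $L^{2}(0,T;L^{2})$ convergence of Part (2) and the uniform $L^{2}(0,T;H^{1})$ bound: since $H^{1-\alpha}=[L^{2},H^{1}]_{1-\alpha}$, the pointwise-in-time estimate $\|\theta^{\eta}-\theta\|_{H^{1-\alpha}}\leq C\|\theta^{\eta}-\theta\|^{\alpha}\|\theta^{\eta}-\theta\|_{H^{1}}^{1-\alpha}$ holds, and integrating via H\"older together with the uniform bound forces $\theta^{\eta}\to\theta$ strongly in $L^{2}(0,T;H^{1-\alpha})$ for every $0<\alpha<1/2$. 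Strong $L^{2}$ convergence then yields a further subsequence converging pointwise a.e.\ on $(0,T]\times\Omega$, and since $\mathbb{T}$ is continuous (being the inverse of the monotone $\theta$), $S^{\eta}=\mathbb{T}(\theta^{\eta})\to\mathbb{T}(\theta)=S$ pointwise a.e. The main obstacle is the temporal-translate estimate of Part (2): because the diffusion in \eqref{eq:S_eq2} is degenerate there is no direct control on $\partial_{t}\theta^{\eta}$, so one must pass through the saturation variable $S^{\eta}=\mathbb{T}(\theta^{\eta})$, where the discrete-in-time structure blocks a naive chain rule; Lemma \ref{lem:lem3} together with the Lipschitz reduction above is precisely the device that circumvents this, and securing a constant uniform in $\eta$ is the delicate point.
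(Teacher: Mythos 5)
Your proof is correct and follows essentially the route the paper itself indicates: the paper states this corollary without proof, citing \cite{chen2001degenerate} and remarking only that it follows from Lemma \ref{lem:lem3} together with the uniform bounds of Lemma \ref{lem:3.7}. Your elaboration --- weak compactness from those bounds, strong $L^{2}$ compactness via uniform time-translation estimates obtained by testing \eqref{DisSol} and handling the discrete time differences with the summation lemma, then interpolation and continuity of $\mathbb{T}$ for part (3) --- is precisely the standard argument that makes this one-line justification rigorous.
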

In the rest of this section,   we  analyze the stability of the weak solution by bounding the difference between the two solutions $(S_{i},P_{i})$, $i=1,2$ as follows.
\begin{lem}
\label{lem:eq1_bound} \cite{chen2001degenerate} Let $(S_{i},P_{i})$, $i=1,2$ be two weak solutions to  \eqref{Weak2phase} given by Theorem \ref{Thm2Phase} with source term $q_{t,i}$ and $q_{w,i}$ with respectively. Then we have 
\[
\|\nabla(P_{2}-P_{1})\|_{L^{2}}\leq C\left(\|\kappa(S_{2})-\kappa(S_{1})\|_{L^{2}}+\|q_{t,2}-q_{t,1}\|_{L^{2}}\right).
\]
\end{lem}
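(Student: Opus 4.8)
The plan is to test the weak pressure equation from \eqref{Weak2phase} against the difference of the two pressures and then exploit the uniform ellipticity of the coefficient $\kappa K$. Writing the pressure equation for each solution,
\[
\int_{\Omega}\kappa(S_{i})K\,\nabla P_{i}\cdot\nabla w=\int_{\Omega}q_{t,i}\,w,\qquad i=1,2,\quad\forall\,w\in H^{1}(\Omega),
\]
I would subtract the two identities. Since $P_{1}$ and $P_{2}$ solve equations with \emph{different} coefficients $\kappa(S_{1})\neq\kappa(S_{2})$, a direct subtraction does not yield a clean elliptic problem for $P_{2}-P_{1}$; the essential algebraic step is to add and subtract $\kappa(S_{2})K\nabla P_{1}\cdot\nabla w$ so as to isolate the gradient difference:
\[
\int_{\Omega}\kappa(S_{2})K\,\nabla(P_{2}-P_{1})\cdot\nabla w=-\int_{\Omega}\big(\kappa(S_{2})-\kappa(S_{1})\big)K\,\nabla P_{1}\cdot\nabla w+\int_{\Omega}(q_{t,2}-q_{t,1})\,w.
\]

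Next I would choose the admissible test function $w=P_{2}-P_{1}$, which lies in $V_{0}$ because both pressures are normalized to zero mean. For the left-hand side, the lower bounds $\kappa(S)\geq\underline{\kappa}$ and $\xi^{T}K\xi\geq\underline{K}$ give coercivity,
\[
\int_{\Omega}\kappa(S_{2})K\,\nabla(P_{2}-P_{1})\cdot\nabla(P_{2}-P_{1})\geq\underline{\kappa}\,\underline{K}\,\|\nabla(P_{2}-P_{1})\|_{L^{2}}^{2}.
\]
The first term on the right is controlled by Cauchy--Schwarz together with the upper bound $\xi^{T}K\xi\leq\overline{K}$ and assumption $A8$, which furnishes $\|\nabla P_{1}\|_{L^{\infty}}\leq C$; hence
\[
\left|\int_{\Omega}\big(\kappa(S_{2})-\kappa(S_{1})\big)K\,\nabla P_{1}\cdot\nabla(P_{2}-P_{1})\right|\leq C\,\|\kappa(S_{2})-\kappa(S_{1})\|_{L^{2}}\,\|\nabla(P_{2}-P_{1})\|_{L^{2}}.
\]
The source term is handled by Cauchy--Schwarz and the Poincar\'e inequality on $V_{0}$ (valid on the connected Lipschitz domain of $A1$), $\|P_{2}-P_{1}\|_{L^{2}}\leq C\|\nabla(P_{2}-P_{1})\|_{L^{2}}$, yielding
\[
\left|\int_{\Omega}(q_{t,2}-q_{t,1})(P_{2}-P_{1})\right|\leq C\,\|q_{t,2}-q_{t,1}\|_{L^{2}}\,\|\nabla(P_{2}-P_{1})\|_{L^{2}}.
\]

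Combining these three estimates and dividing through by $\|\nabla(P_{2}-P_{1})\|_{L^{2}}$ (the case where this vanishes being trivial) produces the stated bound after absorbing $\underline{\kappa}$, $\underline{K}$, $\overline{K}$, $\|\nabla P_{1}\|_{L^{\infty}}$, and the Poincar\'e constant into a single generic constant $C$. I expect the main obstacle to be the variable-coefficient difficulty just described: because the two pressures satisfy equations with distinct saturation-dependent coefficients, the add-and-subtract splitting is what converts the problem into a coercive estimate with a controllable commutator term $(\kappa(S_{2})-\kappa(S_{1}))\nabla P_{1}$, and it is precisely the $W^{1,\infty}$ regularity of the pressure (assumption $A8$) that lets this term be dominated in $L^{2}$ by $\|\kappa(S_{2})-\kappa(S_{1})\|_{L^{2}}$.
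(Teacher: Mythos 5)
Your proof is correct and follows essentially the same route as the paper: subtract the two weak pressure equations with an add-and-subtract splitting to isolate the gradient difference, test with $P_{2}-P_{1}$, use coercivity, bound the commutator term via assumption $A8$ and the source term via Poincar\'e. The only (immaterial, symmetric) difference is that you keep the coefficient $\kappa(S_{2})$ with $\nabla(P_{2}-P_{1})$ so the commutator involves $\nabla P_{1}$, whereas the paper keeps $\kappa(S_{1})$ and puts $\nabla P_{2}$ in the commutator.
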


With $\pi$ to be  the average operator $\pi (u)=\cfrac{1}{|\Omega|}\int_{\Omega}u$, and   $e=(I-\pi)(S_{2}-S_{1})$, where $s_1$ and $s_2$ are solutions with two different sources, one can prove the following stability result. 

\begin{lem}
\label{lem:eq2_bound}
\cite{chen2001degenerate}
Let $(S_{i},P_{i})$, $i=1,2$ be two weak solutions to  \eqref{Weak2phase} given by Theorem \ref{Thm2Phase} with source term $q_{t,i}$ and $q_{w,i}$ with respectively. With the assumptions $A1-A8$, we have 
\begin{align*}
 & \cfrac{1}{2}\partial_{t}\int_{0}^{t}\|e\|_{V_{0}^{*}}^{2}+\int_{0}^{t}(\theta_{2}-\theta_{1},S_{2}-S_{1})\\
\leq & \cfrac{\delta_{p}^{p}}{p}\int_{0}^{t}\|S_{1}-S_{2}\|_{L^{p}}^{p}+\cfrac{C_{q}}{q\delta_{p}^{q}}\Big(\|\pi(S_{2}-S_{1})(0,\cdot)\|_{L^{q}}^{q}+\|q_{w,2}-q_{w,1}\|_{L^{q}(0,t;L^{q})}^{q}\Big)\\
 & +C\cfrac{\delta}{2}\int_{0}^{t}\Big(\|\kappa(S_{2})-\kappa(S_{1})\|_{L^{2}}^{2}+\|\nabla(P_{2}-P_{1})\|_{L^{2}}^{2}+\|q_{w,2}-q_{w,1}\|_{L^{2}}^{2}\Big)+C\cfrac{1}{2\delta}\int_{0}^{t}\|\nabla G(e)\|_{L^{2}}^{2}
\end{align*}
where $\delta$, $\delta_{p}$ are arbitrary positive constants, $p$ is arbitrary positive constant larger than $1$ and $\cfrac{1}{p}+\cfrac{1}{q}=1$.   
\end{lem}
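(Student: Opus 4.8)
The plan is to derive an energy balance for the difference of the two saturation equations by testing the weak formulation \eqref{Weak2phase} with the Green's operator applied to the zero-mean part of $S_2-S_1$. Writing \eqref{Weak2phase} for each solution $(S_i,P_i)$ and subtracting, I obtain, for a.e.\ $t$ and every $v\in H^1(\Omega)$,
\[
\langle\partial_t(S_2-S_1),v\rangle+\int_\Omega K\nabla(\theta_2-\theta_1)\cdot\nabla v+\int_\Omega K\big(\kappa_w(S_2)\nabla P_2-\kappa_w(S_1)\nabla P_1\big)\cdot\nabla v=(q_{w,2}-q_{w,1},v).
\]
The key choice is $v=G(e)$, where $e=(I-\pi)(S_2-S_1)\in V_0$, which is designed to reproduce the $V_0^*$-norm on the left.

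First I would identify the three structural terms. Differentiating $\|e\|_{V_0^*}^2=a(G(e),G(e))$ and using the symmetry of $a(\cdot,\cdot)$ and the linearity of $G$ gives $\langle\partial_t e,G(e)\rangle=\tfrac12\partial_t\|e\|_{V_0^*}^2$; since $\int_\Omega G(e)=0$, the average part drops out and $\partial_t e$ may be replaced by $\partial_t(S_2-S_1)$ in the pairing. For the diffusion term, the defining identity of the Green's operator, $a(G(e),\phi)=(e,\phi)$, which extends to all $\phi\in H^1(\Omega)$ precisely because $e$ has zero mean, converts $\int_\Omega K\nabla(\theta_2-\theta_1)\cdot\nabla G(e)$ into $(e,\theta_2-\theta_1)$. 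Splitting $e=(S_2-S_1)-\pi(S_2-S_1)$ then isolates the coercive quantity $(\theta_2-\theta_1,S_2-S_1)$ to be kept on the left, leaving a mean remainder $(\pi(S_2-S_1),\theta_2-\theta_1)$ to be absorbed on the right.

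Next I would estimate the remaining terms. The mean remainder is controlled by bounding $\int_\Omega|\theta_2-\theta_1|$ via the upper bound in Lemma \ref{lem:3.3} and then applying Young's inequality with conjugate exponents $(p,q)$, which splits it into $\tfrac{\delta_p^p}{p}\|S_2-S_1\|_{L^p}^p$ and a power of $|\pi(S_2-S_1)|$. The crucial observation is that the spatial average satisfies the scalar relation $\partial_t\pi(S_2-S_1)=\tfrac{1}{|\Omega|}\int_\Omega(q_{w,2}-q_{w,1})$, obtained by taking $v\equiv 1$ in the difference equation; integrating in time expresses $\pi(S_2-S_1)(t)$ through its initial value and the source difference, which is exactly how $\|\pi(S_2-S_1)(0,\cdot)\|_{L^q}^q$ and $\|q_{w,2}-q_{w,1}\|_{L^q(0,t;L^q)}^q$ enter. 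For the convection term I would split $\kappa_w(S_2)\nabla P_2-\kappa_w(S_1)\nabla P_1=\kappa_w(S_2)\nabla(P_2-P_1)+(\kappa_w(S_2)-\kappa_w(S_1))\nabla P_1$, use the boundedness of $\kappa_w$, $K$ and of $\nabla P_1$ from assumption $A8$, and apply Cauchy--Schwarz and then Young's inequality with parameter $\delta$ to produce the $\tfrac{C\delta}{2}(\|\kappa(S_2)-\kappa(S_1)\|_{L^2}^2+\|\nabla(P_2-P_1)\|_{L^2}^2)$ and $\tfrac{C}{2\delta}\|\nabla G(e)\|_{L^2}^2$ contributions. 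The source term $(q_{w,2}-q_{w,1},G(e))$ is handled by Poincar\'e's inequality for the zero-mean function $G(e)$ followed by another $\delta$-Young step. Integrating the resulting pointwise balance over $(0,t)$ assembles the stated inequality.

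I expect the main obstacle to be the treatment of the non-zero-mean part $\pi(S_2-S_1)$. The coercive pairing $(\theta_2-\theta_1,S_2-S_1)$ arises cleanly only for the full difference, whereas testing with $G(e)$ natively controls only the zero-mean component; reconciling the two forces the mean remainder, and closing it requires both the nonlinear comparison of $\theta$- and $S$-increments from Lemma \ref{lem:3.3} and the separate tracking of the average through its conservation-law identity. The remaining pressure and source contributions are then routine once the regularity $A8$ is invoked.
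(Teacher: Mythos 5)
Your proposal is correct and follows essentially the same route as the paper's proof: subtract the two weak formulations, test with $G(e)$ to produce $\tfrac{1}{2}\partial_t\|e\|_{V_0^*}^2$ and $(\theta_2-\theta_1,e)$, split off the coercive pairing $(\theta_2-\theta_1,S_2-S_1)$ and absorb the mean remainder by H\"older--Young with exponents $(p,q)$, control the spatial average via constant-in-space test functions, and handle the pressure and source terms using assumption $A8$, Cauchy--Schwarz, Young with parameter $\delta$, and Poincar\'e for the zero-mean function $G(e)$. The only cosmetic deviations are that you split the convection term as $\kappa_w(S_2)\nabla(P_2-P_1)+(\kappa_w(S_2)-\kappa_w(S_1))\nabla P_1$ where the paper uses $\kappa_w(S_1)\nabla(P_2-P_1)+(\kappa_w(S_2)-\kappa_w(S_1))\nabla P_2$, and that you extract the mean evolution by taking $v\equiv 1$ and then raising to the $q$-th power rather than testing directly with $(\pi(S_2-S_1))^{q-1}$; both variants are equivalent.
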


We will give the stability estimate and thus the uniqueness with the following theorem.
\begin{thm} \label{thm:3.11}\cite{chen2001degenerate}
Let $(S_{i},P_{i})$, $i=1,2$ are two weak solutions to  \eqref{Weak2phase} given by Theorem \ref{Thm2Phase}. With the assumptions $A1-A8$, we have 
\begin{align*}
\|(I-\pi) & (S_{2}-S_{1})\|_{L^{\infty}(0,T;V_0^{*})}^{2}\leq  Ce^{Ct}\Big(\|\pi(S_{2}-S_{1})(0,\cdot)\|_{L^{q_{0}}(\Omega)}^{q_{0}}+\|q_{w,2}-q_{w,1}\|_{L^{q_{0}}(0,T;L^{q_{0} }(\Omega))}^{q_{0}}\\
 & +\|(I-\pi)(S_{2}-S_{1})(0,\cdot)\|_{V_0^{*}}^{2}+\|q_{w,2}-q_{w,1}\|_{L^{2}(0,T;L^{2}(\Omega))}^{2}+\|q_{t,2}-q_{t,1}\|_{L^{2}(0,T;L^{2}(\Omega))}^{2}\Big)\, , 
\end{align*}
for some $C>0$, $\tau$ defined as in Lemma \ref{lem:3.3}, and $q_{0}=\cfrac{2+\tau}{1+\tau}$.
\end{thm}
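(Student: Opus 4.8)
The plan is to run a Gr\"onwall argument starting from the energy inequality of Lemma \ref{lem:eq2_bound}, after absorbing every term that is quadratic (or of order $2+\tau$) in the saturation difference into the single coercive quantity $\int_0^t(\theta_2-\theta_1,S_2-S_1)$ on its left-hand side. I read that left-hand side as the time-integrated energy, i.e.\ as $\tfrac12\|e(t)\|_{V_0^{*}}^2-\tfrac12\|e(0)\|_{V_0^{*}}^2$ together with the coercive term $\int_0^t(\theta_2-\theta_1,S_2-S_1)$, and I move the initial value $\tfrac12\|e(0)\|_{V_0^{*}}^2$ to the right (this produces the term $\|(I-\pi)(S_2-S_1)(0,\cdot)\|_{V_0^{*}}^2$ in the statement). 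The crucial bookkeeping is the choice of the free Lebesgue exponent: I would set $p=2+\tau$, so that its conjugate is exactly $q=\tfrac{2+\tau}{1+\tau}=q_0$, matching the exponents in the target estimate and explaining their appearance there.

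First I would dispose of the term $\tfrac{\delta_p^p}{p}\int_0^t\|S_2-S_1\|_{L^p}^p$. With $p=2+\tau$, the lower bound in Lemma \ref{lem:3.3} gives $\tilde C\,\|S_2-S_1\|_{L^{2+\tau}}^{2+\tau}\le(\theta_2-\theta_1,S_2-S_1)$ pointwise in time, so this term is at most $\tfrac{\delta_p^p}{p\tilde C}\int_0^t(\theta_2-\theta_1,S_2-S_1)$ and is absorbed once $\delta_p$ is small. Next I would treat the $\delta$-block. Assumption $A7$, squared and integrated over $\Omega$, yields $\|\kappa(S_2)-\kappa(S_1)\|_{L^2}^2\le C(\theta_2-\theta_1,S_2-S_1)$; feeding this into Lemma \ref{lem:eq1_bound} gives $\|\nabla(P_2-P_1)\|_{L^2}^2\le C(\theta_2-\theta_1,S_2-S_1)+C\|q_{t,2}-q_{t,1}\|_{L^2}^2$. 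Thus the first two summands of the $\delta$-block are again controlled by $(\theta_2-\theta_1,S_2-S_1)$ and absorbed for $\delta$ small, while the pieces $\|q_{w,2}-q_{w,1}\|_{L^2}^2$ and $\|q_{t,2}-q_{t,1}\|_{L^2}^2$ survive as genuine data terms, as does the $L^{q_0}$-block multiplying $\tfrac{C_q}{q\delta_p^q}$.

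With $\delta,\delta_p$ now fixed, the only remaining right-hand term is $\tfrac{C}{2\delta}\int_0^t\|\nabla G(e)\|_{L^2}^2$. Using $\|e\|_{V_0^{*}}^2=|G(e)|_K^2=\int_\Omega K\nabla G(e)\cdot\nabla G(e)\ge\underline K\,\|\nabla G(e)\|_{L^2}^2$ from Definitions \ref{bilinear} and \ref{Greenfun} together with the lower ellipticity bound on $K$, this is bounded by $\tfrac{C}{2\delta\underline K}\int_0^t\|e(s)\|_{V_0^{*}}^2\,ds$. The inequality then reads $\tfrac12\|e(t)\|_{V_0^{*}}^2\le(\text{data})+C'\int_0^t\|e(s)\|_{V_0^{*}}^2\,ds$, where $(\text{data})$ collects the initial and source contributions at the exponents $q_0$ and $2$ exactly as in the statement. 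Gr\"onwall's inequality gives $\|e(t)\|_{V_0^{*}}^2\le Ce^{Ct}(\text{data})$, and taking the supremum over $t\in(0,T)$ produces the claimed $L^\infty(0,T;V_0^{*})$ bound.

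The main obstacle is the consistency of the parameter choices forced by the degenerate diffusion. The lower bound of Lemma \ref{lem:3.3} controls the saturation difference only in $L^{2+\tau}$ rather than $L^2$, so the absorption of $\int_0^t\|S_2-S_1\|_{L^p}^p$ is possible precisely at the single exponent $p=2+\tau$; this rigidly fixes the conjugate $q_0$, and one must then check that the surviving data norms are exactly those at exponents $q_0$ and $2$ in the theorem, and that the constant $C/\delta$ entering the Gr\"onwall loop stays finite after $\delta$ has been fixed small enough to close all absorptions. Keeping these competing smallness requirements on $\delta$ and $\delta_p$ compatible, while ensuring the coercive term $\int_0^t(\theta_2-\theta_1,S_2-S_1)$ is not over-spent, is the delicate point.
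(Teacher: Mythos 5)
Your proposal is correct and follows essentially the same route as the paper's own proof: starting from Lemma \ref{lem:eq2_bound}, choosing $p=2+\tau$ (hence $q=q_0$), absorbing the saturation and $\kappa$/pressure terms into the coercive term $\int_0^t(\theta_2-\theta_1,S_2-S_1)$ via Lemma \ref{lem:3.3}, assumption A7, and Lemma \ref{lem:eq1_bound}, and then closing with Gr\"onwall. The only cosmetic difference is that the paper runs Gr\"onwall on $E(t)=\|\nabla G(e)\|_{L^2}^2$ rather than directly on $\|e(t)\|_{V_0^*}^2$, which is immaterial since the two quantities are equivalent by the ellipticity bounds on $K$.
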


\section{Data assimilation algorithm  for the  two-phase flow problem}\label{sec4}
In this section, we first describe the  nudging algorithm for  the two-phase  flow equations \eqref{eq:pressure_eq2} and \eqref{eq:S_eq2}. We can consider we can obtain some observed data of the saturation. We consider the data collecting operator is denoted as $\Pi_{H}^{*}(S)$. We remarked that the image of the data collecting operator is usually in a finite dimension space. The data assimilation algorithm for two-phase  flow problem  is defined
as 
\begin{equation}\label{eq: DA2phase}
\begin{split}
-\nabla\cdot(\kappa K\nabla P) & =q_{t},\\
\cfrac{\partial (\tilde{S})}{\partial t}+\nabla\cdot\left(\cfrac{\kappa_{w}}{\kappa}K\nabla P\right)+\mu \, (\Pi_{H}^{*}(\tilde{S})-\Pi_{H}^{*}(S)) & =q_{w}, 
\end{split}  
\end{equation}
where  $\Pi_{H}:H^{1}(\Omega)\rightarrow L^{2}(\Omega)$ is a linear interpolant operator satisfying 
\begin{equation}
  \begin{split}
  \|\Pi_H(S)-S\|& \leq CH\|S\|_{H^{1}}, \\
\|\Pi_H(S) \| &\leq C \|S\|, 
  \end{split}  
\end{equation}
which can naturally be extended to $L^2(\Omega)$. The dual operator is given by $\Pi_{H}^{*}:L^{2}(\Omega)\rightarrow H^{-1}(\Omega)$
\[
\int_{\Omega}\Pi_{H}(S)\, v=\int_{\Omega}S\, \Pi_{H}^{*}(v), \;\quad \forall v\in L^{2}(\Omega).
\]
For instance,  $\Pi_{H}$ may be considered to be the $L^{2}$ projection
operator to the piece-wise constant finite element space (for more details see \cite{JonesTiti}), namely, 
\[
\Pi_H(S)(x)=\int_{K_{i}}S, \;\quad \forall x\in K_{i}, 
\]
where $K_{i}$ is a coarse element in $K_{i}$, and in this case, we have \begin{equation}
    \Pi_{H}^{*}=\Pi_{H}.
\end{equation}
 In this case, the domain $\Omega$ is partitioned into a coarse partition $\mathcal{T}_{H}$, where the observed data  are collected. More precisely, we collect the data of the averaged water saturation  $S$ in each coarse element containing $\{x_{i}\}_{i}^{M}\subset\Omega$, where $x_i$ are the points that physical measurements are performed where $M$ is the number of measurements. For example, we can consider the $\{x_{i}\}_{i}^{M}\subset\Omega$ to be all of the center point of the coarse grid element.
\begin{re}
We remark that there are many choices of $\Pi_H$, for example, we can consider $\Pi_H$ as a standard polynomial interpolation operator .
\end{re}

\begin{definition}[\textbf{Weak solution to the data assimilation algorithm}]

Let  $(S,P)$ be the solution to the  two-phase  problem from Theorem \ref{Thm2Phase}. The continuous data assimilation equations \eqref{eq: DA2phase} has a unique  weak solutions $(\tilde{S},\tilde{P})$ that satisfies for all $T >0 $
\[
\partial_{t}\tilde{S}\in L^{2}(0,T;H^{-1}(\Omega))
\]
and
\begin{equation}\label{WeakDA}
\begin{split}
    \int_{\Omega}\kappa(\tilde{S})K\nabla\tilde{P}\,\cdot \nabla w & =\int_{\Omega}q_{t}w,  \quad \forall w\in H^{1}(\Omega)\,,\\
\int_{0}^{T}\int_{\Omega}\Big(\partial_{t}\tilde{S}\, v+K(\nabla\tilde{\theta}(\tilde{S})+\kappa_{w}(\tilde{S})\nabla\tilde{P})\cdot\nabla v & +  \mu \, \Pi_{H}^{*}(\tilde{S}-S)(v)\Big)\\
& =\int_{0}^{T}\int_{\Omega}q_{w}v, \; \quad \forall v\in L^{2}(0,T;H^{1}(\Omega)). 
\end{split} 
\end{equation}
\end{definition}

\begin{re}
Although the  well-posedness  of  \eqref{WeakDA} is not the focus of this work, we speculate that it can be proved by a usual compactness argument \cite{chen2001degenerate}, or by the fixed-point argument which has recently been suggested in \cite{CGJP21} for a similar problem. 
\end{re}

In the next theorem, we  analyze the residual error coming from the data assimilation algorithm with unknown initial condition, which is the main analytical result of this work.

\begin{thm} \label{main_theorem}  Let  $\Omega \subset \mathbb{R}^{d}$ for $d \in \{2,3\}$ be a connected Lipschitz domain.    Consider $S$ be a  solution  of the two-phase  equations with initial data $ S(0) \in L^2(\Omega)$, ensured by  Theorem \ref{Thm2Phase}, and $\Pi_H: L^2(\Omega) \rightarrow L^2 (\Omega)$  be a linear map satisfying \eqref{I_h}.  Let $\tilde{S}$ be a solution to the data assimilation algorithm given by \eqref{eq: DA2phase} with homogeneous Neumann boundary conditions.  Then for all $H >0$, if 
\begin{equation}
\mu :=  \mu(\tilde{\gamma}, \tilde{C}, C_2 ,H)=2\tilde{\gamma}\Big(\cfrac{C_{2}}{\bar{C}}\Big)^{-\frac{1}{2}}H^{-1}=O(H^{-1})\,,
\end{equation}
the following bound holds for $t<t_{0}$
\begin{equation}\label{MainBound}
    \|(I-\pi)(\tilde{S}-S)\|_{V_0^{*}}^{2}\leq e^{-\frac{\mu}{2}t}\left(\|e(0,\cdot)\|_{V_0^{*}}^{2}+2C_{5}\|\pi(\tilde{S}-S)(0,\cdot)\|_{L^{q_0}}^{q_0}\right), 
\end{equation}
where   $t_0:= t_{0}(\tilde{\gamma},\tilde{C}, C_2,H)$  is given as
\begin{equation}\label{T_zero}
\begin{split}
    t_{0}  = \max\Big\{ 0\leq \zeta <T,  \hspace{0.1cm}\text{s.t. }    \hspace{0.1cm}  c^{*}\, H^{2+\tau}  \leq \, 
      \| \tilde{S}(t)-S(t)\|_{L^{2+\tau}}^{2\tau+\tau^2},\;\forall \, 0\leq t\leq \zeta \Big\}. 
    \end{split}
\end{equation}
Herein, $ \tilde{C}, \,  C_2, \, C_5 $ are constants, defined in the proof of this theorem, depending only on $\Omega$, and  $ c^* := c^* (\tilde{\gamma}, \tilde{C}, C_2)$ is a  constant appearing in \eqref{cStar}, whereas $\tilde{\gamma}=O(1)$ with respect to $H$. 

\end{thm}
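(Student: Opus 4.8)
The plan is to run the dual-norm energy argument behind Lemma~\ref{lem:eq2_bound} and Theorem~\ref{thm:3.11}, but to keep the feedback term so that it turns the Gronwall growth $e^{Ct}$ into genuine decay. Set $e=(I-\pi)(\tilde S-S)$ and test the difference of the saturation equations in \eqref{WeakDA} and \eqref{Weak2phase} against the Green's potential $G(e)\in V_0$; since $\tilde S$ and $S$ carry the \emph{same} sources $q_t,q_w$, the source contributions cancel. Using $a(G(e),\cdot)=(e,\cdot)$, $\int_\Omega G(e)=0$, and the identity
\begin{equation*}
\mu\big(\Pi_H^{*}(\tilde S-S),G(e)\big)=\mu\|e\|_{V_0^{*}}^{2}-\mu\big(\tilde S-S,(I-\Pi_H)G(e)\big),
\end{equation*}
which exposes the damping, I obtain
\begin{equation*}
\tfrac12\tfrac{d}{dt}\|e\|_{V_0^{*}}^{2}+\big(\theta(\tilde S)-\theta(S),\tilde S-S\big)+\mu\|e\|_{V_0^{*}}^{2}=R_{\mathrm{mean}}+R_{\mathrm{pres}}+R_{\mathrm{nud}},
\end{equation*}
with $R_{\mathrm{mean}}=\pi(\tilde S-S)\int_\Omega(\theta(\tilde S)-\theta(S))$, $R_{\mathrm{pres}}=-(K(\kappa_w(\tilde S)\nabla\tilde P-\kappa_w(S)\nabla P),\nabla G(e))$, and the genuinely new feedback defect $R_{\mathrm{nud}}=\mu(\tilde S-S,(I-\Pi_H)G(e))$.

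First I would bound the dissipation below by $\tilde C\|\tilde S-S\|_{L^{2+\tau}}^{2+\tau}$ using Lemma~\ref{lem:3.3}, keeping half of it for later absorption. The term $R_{\mathrm{pres}}$ is handled exactly as in Lemma~\ref{lem:eq2_bound}: splitting $\kappa_w(\tilde S)\nabla\tilde P-\kappa_w(S)\nabla P$ into $\kappa_w(\tilde S)\nabla(\tilde P-P)+(\kappa_w(\tilde S)-\kappa_w(S))\nabla P$, controlling $\|\nabla(\tilde P-P)\|$ by $\|\kappa(\tilde S)-\kappa(S)\|$ through Lemma~\ref{lem:eq1_bound} (equal $q_t$), using A8 and $\|\nabla G(e)\|^{2}\le\bar C\|e\|_{V_0^{*}}^{2}$, and converting $\|\kappa(\tilde S)-\kappa(S)\|^{2}$ into $C(\theta(\tilde S)-\theta(S),\tilde S-S)$ via A7; a Young split then returns a small multiple of the dissipation to the left and a multiple of $\|e\|_{V_0^{*}}^{2}$ to the right. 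Crucially, this residual together with the Gronwall term $\tfrac{C}{2\delta}\|\nabla G(e)\|^{2}\le\tfrac{C\bar C}{2\delta}\|e\|_{V_0^{*}}^{2}$ is dominated by the feedback damping $\mu\|e\|_{V_0^{*}}^{2}$ once $\mu$ is a fixed multiple of $\bar C C/\delta$; this is the calibration $\mu=2\tilde\gamma(C_2/\bar C)^{-1/2}H^{-1}$, $\tilde\gamma=O(1)$, and it is precisely what replaces the factor $e^{Ct}$ of Theorem~\ref{thm:3.11} by a net damping of size at least $\tfrac{\mu}{2}$.

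The crux is $R_{\mathrm{nud}}$. The interpolation bound \eqref{I_h} gives $\|(I-\Pi_H)G(e)\|\le c_0H\|G(e)\|_{H^{1}}\le CH\|e\|_{V_0^{*}}$, whence
\begin{equation*}
|R_{\mathrm{nud}}|\le\tfrac{\mu}{4}\|e\|_{V_0^{*}}^{2}+C\mu H^{2}\|\tilde S-S\|_{L^{2}}^{2}.
\end{equation*}
The first piece is swallowed by the damping, but the second exposes the central difficulty: the dissipation controls only the \emph{degenerate} norm $\|\tilde S-S\|_{L^{2+\tau}}$, not $L^{2}$. Since $\mu H^{2}=O(H)$ and $\|\tilde S-S\|_{L^{2}}^{2}\le C_2\|\tilde S-S\|_{L^{2+\tau}}^{2}$ on the bounded domain, the bad term is $O(H)\|\tilde S-S\|_{L^{2+\tau}}^{2}$, which is absorbed into $\tfrac{\tilde C}{2}\|\tilde S-S\|_{L^{2+\tau}}^{2+\tau}$ exactly when $\|\tilde S-S\|_{L^{2+\tau}}^{\tau}$ exceeds a fixed multiple of $H$. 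Raising this to the power $\tau(2+\tau)$ reproduces the defining inequality $c^{*}H^{2+\tau}\le\|\tilde S-S\|_{L^{2+\tau}}^{2\tau+\tau^{2}}$ of the window $t<t_0$ in \eqref{T_zero} and fixes $c^{*}$ in \eqref{cStar}; thus on $[0,t_0)$ the nonlinear dissipation beats the feedback defect and both are removed from the estimate.

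It remains to close the mean. Integrating the saturation difference equation over $\Omega$ under the homogeneous Neumann condition and the normalization $\Pi_H1=1$ shared by the interpolants of interest yields the scalar relation $\tfrac{d}{dt}\pi(\tilde S-S)+\mu\,\pi(\tilde S-S)=0$, so $\tfrac{d}{dt}\|\pi(\tilde S-S)\|_{L^{q_0}}^{q_0}=-\mu q_0\|\pi(\tilde S-S)\|_{L^{q_0}}^{q_0}$ with $q_0=\tfrac{2+\tau}{1+\tau}$. Bounding $R_{\mathrm{mean}}$ by $|\theta(\tilde S)-\theta(S)|\le C_0|\tilde S-S|$ and the conjugate Young pair $(q_0,2+\tau)$ leaves an absorbable multiple of the dissipation plus a term proportional to $\|\pi(\tilde S-S)\|_{L^{q_0}}^{q_0}$. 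I would then combine the two estimates in the Lyapunov functional $\Phi=\|e\|_{V_0^{*}}^{2}+2C_5\|\pi(\tilde S-S)\|_{L^{q_0}}^{q_0}$ and choose $C_5$ so large that this leftover is dominated by $C_5\mu q_0\|\pi(\tilde S-S)\|_{L^{q_0}}^{q_0}$, giving $\tfrac{d}{dt}\Phi\le-\tfrac{\mu}{2}\Phi$ on $[0,t_0)$, after which Gronwall produces exactly \eqref{MainBound}. The principal obstacle throughout is the power mismatch forced by the degeneracy of the capillary diffusion (the exponent $\tau>0$ of Lemma~\ref{lem:3.3}): the feedback damps the $L^{2}$-type quantity $\|e\|_{V_0^{*}}$ while coercivity is available only in $L^{2+\tau}$, so once $\|\tilde S-S\|_{L^{2+\tau}}$ falls below the $O(H^{1/\tau})$ resolution threshold the absorption in the previous paragraph fails and synchronization saturates, which is the quantitative content of $t_0$.
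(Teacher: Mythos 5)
Your proposal is correct and follows essentially the same route as the paper's proof: testing the saturation error equation with the Green's potential $G(e)$, extracting the damping $\mu\|e\|_{V_0^*}^{2}$ from the feedback term up to an interpolation defect of size $\mu H^{2}\|\tilde S-S\|_{L^{2}}^{2}$, invoking the degenerate coercivity of Lemma \ref{lem:3.3} to absorb that defect precisely on the window where $\|\tilde S-S\|_{L^{2+\tau}}^{\tau}\gtrsim H$ (which is exactly \eqref{T_zero} and \eqref{cStar}), and treating the mean mode by its own exponentially decaying ODE. The only differences are cosmetic bookkeeping: you absorb the pressure coupling into the dissipation via A7 and package the conclusion as a Lyapunov differential inequality $\tfrac{d}{dt}\Phi\le-\tfrac{\mu}{2}\Phi$, whereas the paper pays a $1/\mu$ Young weight on that term and works with the time-integrated inequality, but the mechanism and the resulting bound \eqref{MainBound} are identical.
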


\begin{re}
The above theorem showed that if we choose  $\mu=O(H^{-1})$, the error of the solution will decay exponentially until the error is reduced to a certain level.
\end{re}

\begin{proof}

Subtracting \eqref{Weak2phase} and \eqref{WeakDA}, the difference  satisfies the following error equations
\begin{equation}
    \int_{\Omega}\kappa(\tilde{S})K\nabla(\tilde{P}-P) \cdot \nabla w=\int_{\Omega}\Big(\kappa(S)-\kappa(\tilde{S})\Big)K\nabla P\cdot\nabla w, \;\quad \forall w\in H^{1}(\Omega), 
\end{equation}

\begin{equation}\label{eq:thm12_p}
  \begin{split}
      \int_{0}^{T}\int_{\Omega}\Big(\partial_{t}(\tilde{S}-S)v & +K\nabla(\tilde{\theta}-\theta)\cdot\nabla v  +\kappa_{w}(\tilde{S})K\nabla(\tilde{P}-P)\cdot\nabla v+\mu(\tilde{S}-S)\Pi_{H}(v)\Big)\\
= & \int_{0}^{T}\int_{\Omega}\Big(\kappa_{w}(S)-\kappa_{w}(\tilde{S})\Big)K\nabla P\cdot\nabla v, \;\quad \forall v\in L^{2}(0,T;H^{1}(\Omega)).
  \end{split}  
\end{equation}
Denote $e=(I-\pi)(\tilde{S}-S)$ and set $v=G(e)$ in \eqref{eq:thm12_p}. With that, we obtain 
\begin{align}
 & \int_{0}^{T}\int_{\Omega}\Big(\partial_{t}(\tilde{S}-S)G(e)+K\nabla(\tilde{\theta}-\theta)\cdot\nabla G(e)+\mu(\tilde{S}-S)\Pi_{H}(G(e))\Big)\nonumber \\
& =  \int_{0}^{T}\int_{\Omega}\Big(\Big(\kappa_{w}(S)-\kappa_{w}(\tilde{S})\Big)K\nabla P\cdot\nabla G(e)-\kappa_{w}(\tilde{S})K\nabla(\tilde{P}-P)\cdot\nabla G(e)\Big).\label{eq:thm12_p-1}
\end{align}
With a similar argument in the proof of Lemma \ref{lem:eq2_bound}, we obtain 
\begin{align}
\int_{\Omega}\partial_{t}(\tilde{S}-S)G(e) & =\cfrac{1}{2}\partial_{t}\|G(e)\|_{V}^{2}=\cfrac{1}{2}\partial_{t}\|e\|_{V^{*}}^{2}, \label{eq:dt_bound}
\end{align}
and 
\begin{align}
\int_{\Omega}K\nabla(\tilde{\theta}-\theta)\cdot\nabla G(e) & =(\tilde{\theta}-\theta,\tilde{S}-S)-(\tilde{\theta}-\theta,\pi(\tilde{S}-S)).\label{eq:theta_bound}
\end{align}
Then choose a test function $v=p(\pi(\tilde{S}-S))^{p-1}$ in (\ref{eq:thm12_p})
to get
\begin{align*}
\|\pi(\tilde{S}-S)(t,\cdot)\|_{L^{p}(\Omega)}^{p}+p\mu\int_{0}^{t}\|\pi(\tilde{S}-S)\|_{L^{p}(\Omega)}^{p} & =\|\pi(\tilde{S}-S)(0,\cdot)\|_{L^{p}(\Omega)}^{p}, 
\end{align*}
and 
\[
\|\pi(\tilde{S}-S)(t,\cdot)\|_{L^{p}(\Omega)}^{p}=e^{-p\mu t}\|\pi(\tilde{S}-S)(0,\cdot)\|_{L^{p}(\Omega)}^{p}, 
\]
for any $p>1$. For any $u\in V^{*}$, we have

\begin{align*}
\int_{\Omega}K\nabla G(u)\cdot\nabla G(u)  =\int_{\Omega}G(u)(u)  & =\int_{\Omega}(I-\Pi_{H})G(u)(u)+\int_{\Omega}\Pi_{H}G(u)(u)\\
 & \leq\bar{C}^{\frac{1}{2}}H\|K^{\frac{1}{2}}\nabla G(u)\|_{L^{2}}\|u\|_{L^{2}}+\int_{\Omega}\Pi_{H}G(u)(u) \,.
\end{align*}
We hence obtain
\begin{equation}
\|u\|_{V_0^{*}}^{2}\leq\bar{C}H^{2}\|u\|_{L^{2}}^{2}+2\int_{\Omega}\Pi_{H}G(u)(u).\label{eq:Pi_bound}
\end{equation}
Combining (\ref{eq:dt_bound}), (\ref{eq:theta_bound}) and (\ref{eq:Pi_bound})
with (\ref{eq:thm12_p-1}), we have 
\begin{align*}
 & \int_{0}^{t}\left(\cfrac{1}{2}\partial_{t}\|e\|_{V_0^{*}}^{2}+(\tilde{\theta}-\theta,\tilde{S}-S)+\cfrac{\mu}{2}\|K^{\frac{1}{2}}\nabla G(e)\|_{L^{2}}^{2}\right) \\
 \leq  &  \int_{0}^{t}\left(\cfrac{1}{2}\partial_{t}\|e\|_{V_0^{*}}^{2}+(\tilde{\theta}-\theta,\tilde{S}-S)+\mu\int_{\Omega}\Pi_{H}G(e)(e)+\cfrac{\mu\bar{C}}{2}H^{2}\|e\|_{L^{2}}^{2}\right)\\
 = &   \int_{0}^{T}\int_{\Omega}\Big(\Big(\kappa_{w}(S)-\kappa_{w}(\tilde{S})\Big)K\nabla P\cdot\nabla G(e)-\kappa_{w}(\tilde{S})K\nabla(\tilde{P}-P)\cdot\nabla G(e)\\
 & \quad +(\tilde{\theta}-\theta,\pi(\tilde{S}-S))+\cfrac{\mu\bar{C}}{2}H^{2}\|e\|_{L^{2}}^{2}\Big)\\
\leq &   C_{1}\int_{0}^{t}\Big(\Big(\|S-\tilde{S}\|_{L^{p}}+\|K^{\frac{1}{p}}\nabla(\tilde{P}-P)\|_{L^{p}(\Omega)}\Big)\|K^{\frac{1}{q}}\nabla G(e)\|_{L^{q}} \\
& \quad +\|\tilde{\theta}-\theta\|_{L^{\tilde{q}}}\|\pi(\tilde{S}-S)\|_{L^{\tilde{P}}}+\cfrac{\mu\bar{C}}{2}H^{2}\|e\|_{L^{2}}^{2}\Big), 
\end{align*}
for any $p>1$ with $\cfrac{1}{p}+\cfrac{1}{q}=1$ and an arbitrary $\mu >0$ which will be determined later. With an argument similar to the
proof of Lemma \ref{lem:eq1_bound}, we can also obtain 
\[
\|K^{\frac{1}{p}}\nabla(\tilde{P}-P)\|_{L^{p}}^{p}\leq\hat{C}_{p}\|\tilde{S}-S\|_{L^{p}}^{p}. 
\]
hence, with  $p=2=q$,  we have

\begin{equation}
\begin{split}
& \int_{0}^{t}\left(\cfrac{1}{2}\partial_{t}\|e\|_{V_0^{*}}^{2}+(\tilde{\theta}-\theta,\tilde{S}-S)+\cfrac{\mu}{2}\|K^{\frac{1}{2}}\nabla G(e)\|_{L^{2}}^{2}\right)  \\
& \leq  \, C_{1}\int_{0}^{t}\Bigg(\big(\|S-\tilde{S}\|_{L^{2}}+\|K^{\frac{1}{2}}\nabla(\tilde{P}-P)\|_{L^{2}(\Omega)}\big)\|K^{\frac{1}{2}}\nabla G(e)\|_{L^{2}}\\
& \quad \quad +\|\tilde{\theta}-\theta\|_{L^{2+\tau}}\|\pi(\tilde{S}-S)\|_{L^{q_0}}+\cfrac{\mu\bar{C}}{2}H^{2}\|e\|_{L^{2}}^{2}\Bigg), 
    \end{split}
    \end{equation}
where $ q_0 = \frac{2+\tau}{1+\tau} $, and 
\[
\int_{0}^{t}\Big(\|S-\tilde{S}\|_{L^{2}}+\|K^{\frac{1}{2}}\nabla(\tilde{P}-P)\|_{L^{2}(\Omega)}\Big)\|K^{\frac{1}{2}}\nabla G(e)\|_{L^{2}}\leq\cfrac{\mu}{4C_{1}}\|K^{\frac{1}{2}}\nabla G(e)\|_{L^{2}}^{2}+\cfrac{2C_{2}}{\mu}\|S-\tilde{S}\|_{L^{2}}^{2}, 
\]
where $C_{2}:=4C_{1}(1+\hat{C}_{2})$. We next estimate the term $\|\tilde{\theta}-\theta\|_{L^{2+\tau}}\|\pi(\tilde{S}-S)\|_{L^{q_0}}$
as 
\begin{align*}
\|\tilde{\theta}-\theta\|_{L^{2+\tau}}\|\pi(\tilde{S}-S)\|_{L^{q_0}} & \leq\cfrac{\tilde{C}}{2C_{1}C_{0}}\|\tilde{\theta}-\theta\|_{L^{2+\tau}}^{2+\tau}+\cfrac{2C_{1}C_{0}(1+\tau)}{\tilde{C}(2+\tau)^{2}}\|\pi(\tilde{S}-S)\|_{L^{q_0}}^{q_0}\\
 & \leq\cfrac{\tilde{C}}{2C_{1}}\|\tilde{S}-S\|_{L^{2+\tau}}^{2+\tau}+\cfrac{C_{3}}{C_{1}}\|\pi(\tilde{S}-S)\|_{L^{q_0}}^{q_0}, 
\end{align*}
where $C_{3}=\cfrac{2C_{1}^{2}C_{0}(1+\tau)}{\tilde{C}(2+\tau)^{2}}.$  Now, from $\int_{0}^{t}(\tilde{\theta}-\theta,\tilde{S}-S)\geq\tilde{C}\int_{0}^{t}\int_{\Omega}|\tilde{S}-S|^{2+\tau}$,
we get that 
\begin{align*}
 & \int_{0}^{t}\Big(\cfrac{1}{2}\partial_{t}\|e\|_{V_0^{*}}^{2}+\cfrac{\tilde{C}}{2}\int_{0}^{t}\int_{\Omega}|\tilde{S}-S|^{2+\tau}+\cfrac{\mu}{4}\|K^{\frac{1}{2}}\nabla G(e)\|_{L^{2}}^{2}\Big)\\
 & \leq  \int_{0}^{t}\Big(\cfrac{C_{2}}{\mu}\|S-\tilde{S}\|_{L^{2}}^{2}+C_{3}\|\pi(\tilde{S}-S)\|_{L^{q_0}}^{q_0}+\cfrac{\mu\bar{C}}{2}H^{2}\|e\|_{L^{2}}^{2},
\end{align*}
Then with the help of  $\|e\|_{L^{2}}^{2}\leq\|\tilde{S}-S\|_{L^{2}}^{2}$ and $\|\pi(\tilde{S}-S)\|_{L^{q_0}}^{q_0}\leq e^{-2(q_0)t}\|\pi(\tilde{S}-S)(0,\cdot)\|_{L^{q_0}}^{q_0}$,
we obtain
\begin{align*}
 & \int_{0}^{t}\Big(\cfrac{1}{2}\partial_{t}\|e\|_{V_0^{*}}^{2}+\cfrac{\tilde{C}}{2}\int_{0}^{t}\int_{\Omega}|\tilde{S}-S|^{2+\tau}+\cfrac{\mu}{4}\|K^{\frac{1}{2}}\nabla G(e)\|_{L^{2}}^{2}\Big)\\
\leq & \Big(\cfrac{2C_{2}}{\mu}+\cfrac{\mu\bar{C}}{2}H^{2}\Big)\|S-\tilde{S}\|_{L^{2}}^{2}+C_{3}\int_{0}^{t}e^{-2(q_0)t}\|\pi(\tilde{S}-S)(0,\cdot)\|_{L^{q_0}}^{q_0}.
\end{align*}
We can now take $\mu$ to be the form $\mu=2\tilde{\gamma}\Big(\cfrac{C_{2}}{\bar{C}}\Big)^{-\frac{1}{2}}H^{-1}$ (where $\tilde{\gamma}$ is a chosen constant) to get
\[
\Big(\cfrac{2C_{2}}{\mu}+\cfrac{\mu\bar{C}}{2}H^{2}\Big)\|S-\tilde{S}\|_{L^{2}}^{2}=H\Big(\cfrac{C_{2}}{\bar{C}}\Big)^{\frac{1}{2}}(\cfrac{1}{\tilde{\gamma}}+\tilde{\gamma}).
\]
With $\int_{0}^{t}\int_{\Omega}|\tilde{S}-S|^{2}\leq\tilde{C}_{\tau}\int_{0}^{t}\Big(\int_{\Omega}|\tilde{S}-S|^{2+\tau}\Big)^{\frac{2}{2+\tau}}$,
we obtain 
\begin{align*}
 & \int_{0}^{t}\Big(\cfrac{1}{2}\partial_{t}\|e\|_{V_0^{*}}^{2}+\cfrac{\tilde{C}}{2}\int_{0}^{t}\int_{\Omega}|\tilde{S}-S|^{2+\tau}+\cfrac{\mu}{4}\|K^{\frac{1}{2}}\nabla G(e)\|_{L^{2}}^{2}\Big)\\
\leq & H\Big(\cfrac{C_{2}}{\bar{C}}\Big)^{\frac{1}{2}}(\cfrac{1}{\tilde{\gamma}}+\tilde{\gamma})\int_{0}^{t}\Big(\int_{\Omega}|\tilde{S}-S|^{2+\tau}\Big)^{\frac{2}{2+\tau}}+C_{3}\int_{0}^{t}e^{-2(q_0)t}\|\pi(\tilde{S}-S)(0,\cdot)\|_{L^{q_0}}^{q_0}.
\end{align*}

Now consider 
$$I:= \left\{0<t<T,  \quad  \text{such that} \quad \;c^*\, H^{2+\tau}\leq\Big(\int_{\Omega}|\tilde{S}(t,\cdot)-s(t,\cdot)|^{2+\tau}\Big)^{\tau}\right\},$$
where 
\begin{equation}\label{cStar}
    c^{*} = \Big(\Big(\cfrac{C_{2}}{\bar{C}}\Big)^{\frac{1}{2}}(\cfrac{1}{\tilde{\gamma}}+\tilde{\gamma})\Big)^{2+\tau}\Big(\cfrac{\tilde{C}}{4}\Big)^{-2-\tau}. 
\end{equation}
Now for all $t\in I$, we have $$\Big(\cfrac{C_{2}}{\bar{C}}\Big)^{\frac{2+\tau}{2}}(\cfrac{1}{\tilde{\gamma}}+\tilde{\gamma})^{2+\tau}\Big(\cfrac{\tilde{C}}{4}\Big)^{-2-\tau}H^{2+\tau}\leq\Big(\int_{\Omega}|\tilde{S}-S|^{2+\tau}\Big)^{\tau}, $$
and 
\begin{align*}
& H\Big(\cfrac{C_{2}}{\bar{C}}\Big)^{\frac{1}{2}}(\cfrac{1}{\tilde{\gamma}}+\tilde{\gamma})\int_{0}^{t}\Big(\int_{\Omega}|\tilde{S}-S|^{2+\tau}\Big)^{\frac{2}{2+\tau}}\\
& =\Big(\Big(\cfrac{C_{2}}{\bar{C}}\Big)^{\frac{2+\tau}{2}}(\cfrac{1}{\tilde{\gamma}}+\tilde{\gamma})^{2+\tau}H^{2+\tau}\Big(\int_{\Omega}|\tilde{S}-S|^{2+\tau}\Big)^{-\tau}\Big)\Big)^{\frac{1}{2+\tau}}\int_{0}^{t}\Big(\int_{\Omega}|\tilde{S}-S|^{2+\tau}\Big)\\
 & \leq \, \cfrac{\tilde{C}}{4}\int_{0}^{t}\int_{\Omega}|\tilde{S}-S|^{2+\tau}.
\end{align*}
Hence, we have 
\begin{align*}
  \int_{0}^{t}\Big(\cfrac{1}{2}\partial_{t}\|e\|_{V_0^{*}}^{2}+\cfrac{\tilde{C}}{4}\int_{0}^{t}\int_{\Omega}|\tilde{S}-S|^{2+\tau}+\cfrac{\mu}{4}\|e\|_{V_0^{*}}^{2}\Big)
& \leq  C_{3}\, \int_{0}^{t}e^{-2(q_0)t}\|\pi(\tilde{S}-S)(0,\cdot)\|_{L^{q_0}}^{q_0}\\
& \leq  C_{5} \, \|\pi(\tilde{S}-S)(0,\cdot)\|_{L^{q_0}}^{q_0}, 
\end{align*}
which proves the theorem as
\[
\|e(t,\cdot)\|_{V_0^{*}}^{2}\leq e^{-\frac{\mu}{2}t}\Big(\|e(0,\cdot)\|_{V_0^{*}}^{2}+2C_{5}\|\pi(\tilde{S}-S)(0,\cdot)\|_{L^{q_0}}^{q_0}\Big).
\]
\end{proof}
\begin{re}
We remark that our method can be likewise extended to a general multi-phase model with no obstruction; however, for the sake of simplicity, we will postpone that to a future work.
\end{re}

\section{Computational study} \label{sec5}

We now present results of two numerical tests that illustrate the theory given in the last section.  We consider Algorithm \eqref{eq: DA2phase} for two scenarios with two different permeability profiles.   In both tests,  the domain is $\Omega = (0,100)^2$.   In all of our numerical tests, we consider Neumann boundary conditions for $P$ and $S$. The reference and the approximate solutions are calculated in a fine square mesh with fine mesh size $h=\sqrt{2}/100$ with time step size $dt = 0.05$. For the numerical method, we follow the method in \cites{huber1999multiphase}. The pressure equation is solved by standard mixed finite element method with $RT_0$ finite element space and the saturation equation is solved by explicit upwinding finite volume method. 

The data is obtained in a coarse square mesh with coarse mesh size $H=\sqrt{2}/10$, while the nudge parameter $\mu = 200$.

Since we do not have access to  true (reference) solutions for these  problems, we instead use a computed solution. The reference solutions were  evolved from a zero initial value, and is run to $t = 325$ using the above setting. For the DA computation, we start from zero initial conditions   use the same spatial and temporal discretization parameters as the reference solution, and start assimilation with the reference solution at $t = 25$  (i.e., time $0$ for DA corresponds to $t = 25$ for the reference).

\subsection{Computational study \RN{1}}\label{Comp-Ex1}
In our first experiment,
we consider the relative permeability $k_{r\alpha}$ defined as
\[
k_{ro}=(1-S_{w})^{2},\hspace{0.5cm}k_{rw}=(S_{w})^{2}\, , 
\]
with the injection and production located at the top-left and  bottom-right corner of the domain respectively.  We consider the source terms $q_t,q_w$ are defined as
\begin{equation*}
q_t(t,x) = \begin{cases}
\cfrac{10}{h^{2}} & x\in \text{ the top-left corner finite grid finite element}\\
-\cfrac{10}{h^{2}} & x\in \text{the bottom-right corner finite grid element}
\end{cases},
\end{equation*}
and
\begin{equation*}
q_w(t,x) = \begin{cases}
\cfrac{10}{h^{2}} & x\in \text{ the top-left corner finite grid finite element}\\
-S(t,w)\cfrac{10}{h^{2}} & x\in \text{the bottom-right corner finite grid element}
\end{cases}.
\end{equation*}


In this experiment, we carried out three tests.  The first one is with data taken on full domain $\Omega$, and the other two tests are carried out with data only taken on one of the following square sub-domains at the top - left corner:
$$\Omega_1 =  [0,50]^2 \hspace{0.1 cm} , \hspace{1 cm}  \Omega_2 =  [0,25]^2 .$$

The plots in Figure \ref{fig:error_case1} shows the saturation error in $L^2$ vs. time, where the observational
data are collected from different fractions of the domain. The solution without data assimilation  $\mu = 0$  has only negligible drop (in blue) in its residual error. This underscores the significance of a nudged solution synchronizing  with the reference solution. Full nudging (in red) indicates  synchronization with  the reference solution  roughly at an exponential rate.  We then  continue  by testing the effect of the size of the sub-domain. Machine precision is reached for data collected over the whole domain.  By then, in the case of $\Omega_1$ and $\Omega_2$, the error is within $10^{-10}$ and $10^{-5}$ respectively.  Particularly in the case of subdomains $\Omega_1$ and $\Omega_2$, we see from the
snapshot plots in Figures \ref{Sub1}, \ref{Sub2}, \ref{Sub3}, and  \ref{Sub4} that the main spatial features over the full domain  $\Omega$ are nevertheless captured as time evolves. 

The convergence of the DA solution with deceasing $H$ is shown in table \ref{tab:error_case1_vary_H}. The convergence of the DA solution
to the true solution in time can also be seen in the snapshot  plots of the  solutions  in Figure \ref{fig:error_case1-1-3}. Here at $t = 0.1$,  there is of course a major difference, since the DA simulation starts at $t=0$. The accuracy of DA is
seen to increase by $t=1$ and further by $t=10$. Finally by $t=100$ there is no visual difference between DA and reference solution, which we expect since the $L^2$ difference between the solutions at $t=100$ is seen in Figure \ref{fig:error_case1} to be near $10^{-15}$.  Moreover, the snapshot plots of the true solution starting from zero initial value (bottom plots  in Figure \ref{fig:error_case1-1-3}) indicate the sensitivity of the solution to the initial conditions.


\begin{figure}[!h]
\centering\includegraphics[width=5.5cm,height=3.5cm]{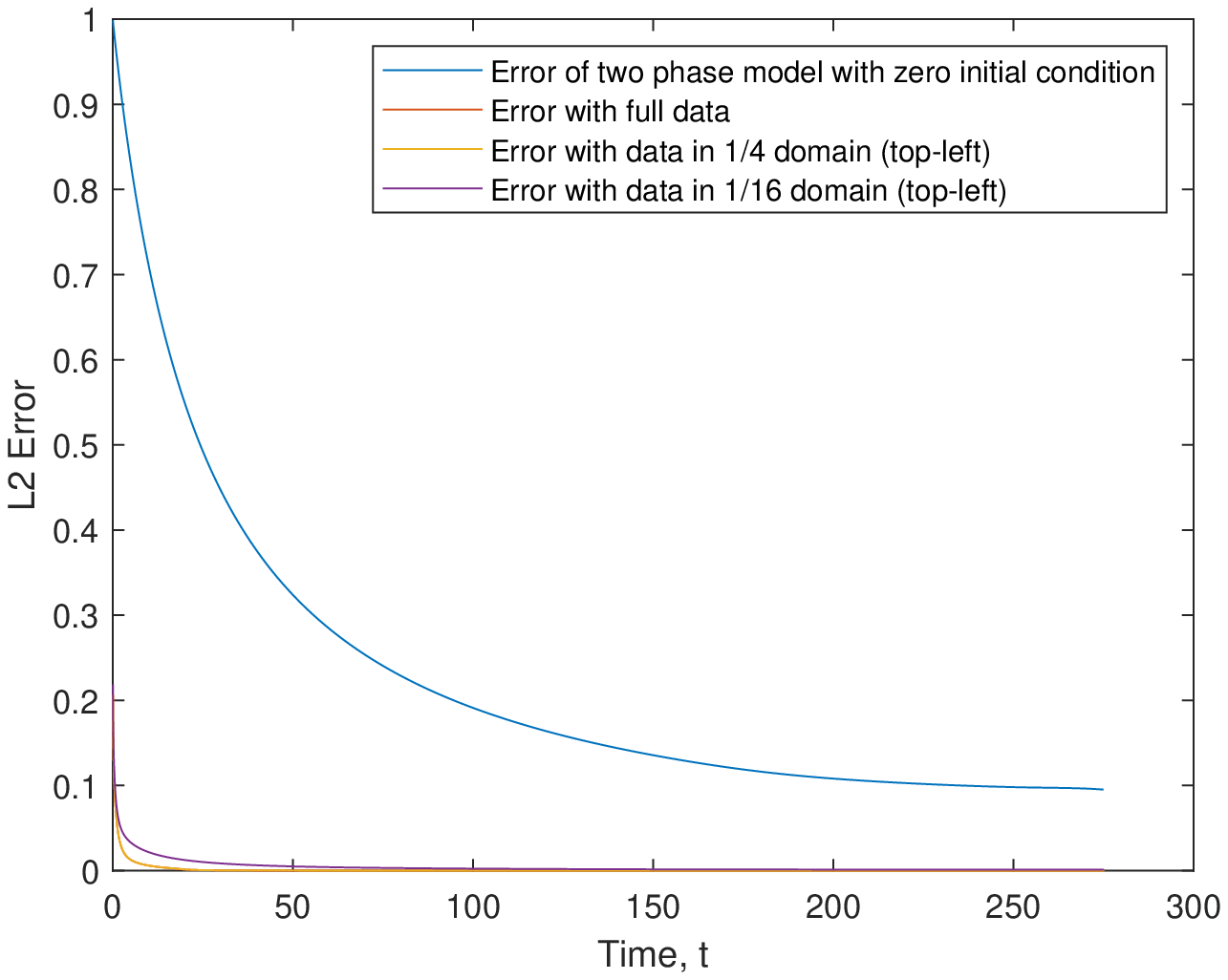} \includegraphics[width=5.5cm,height=3.5cm]{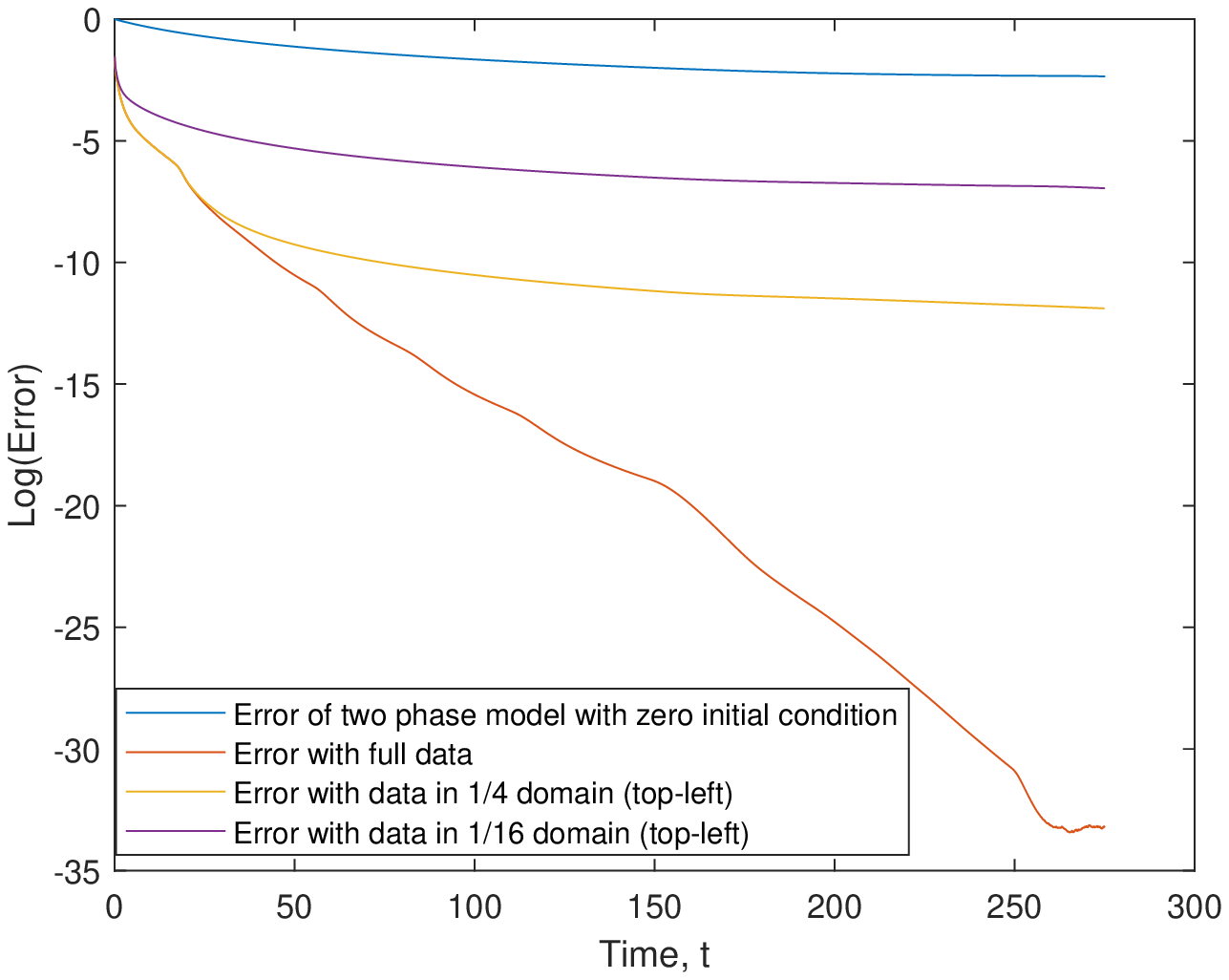}

\caption{Error comparison $\|S(t) - \tilde{S}(t)\|_{L^2}$. Left: $L_2$ error. Right: $L_2$ error in log-scale.}
\label{fig:error_case1}
\end{figure}

\begin{table}
\begin{tabular}{|c|c|c|c|}
\hline 
$(H,\mu)$ & $(1/5,100)$ & $(1/10,200)$ & $(1/20,400)$\tabularnewline
\hline 
$L^{2}$-error at $T=10$ & $0.2499$ & $0.0619$ & $0.0080$\tabularnewline
\hline 
\end{tabular}
\caption{Error convergence with varying $H$}
\label{tab:error_case1_vary_H}
\end{table}

\begin{figure}[!h]
\begin{minipage}[t]{0.2\textwidth}
         \centering
         \includegraphics[width=3cm,height=2cm]{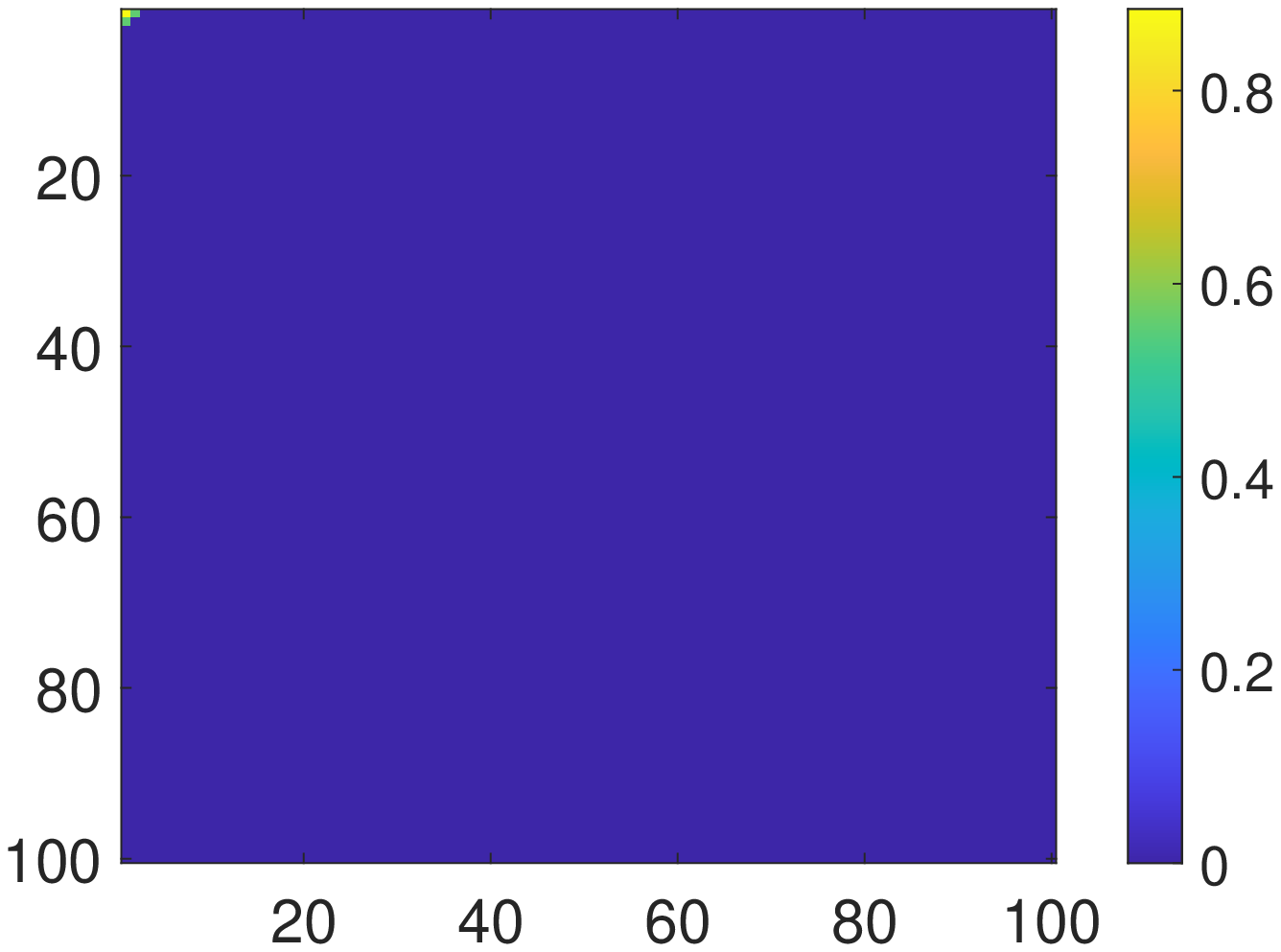}

        \includegraphics[width=3cm,height=2cm]{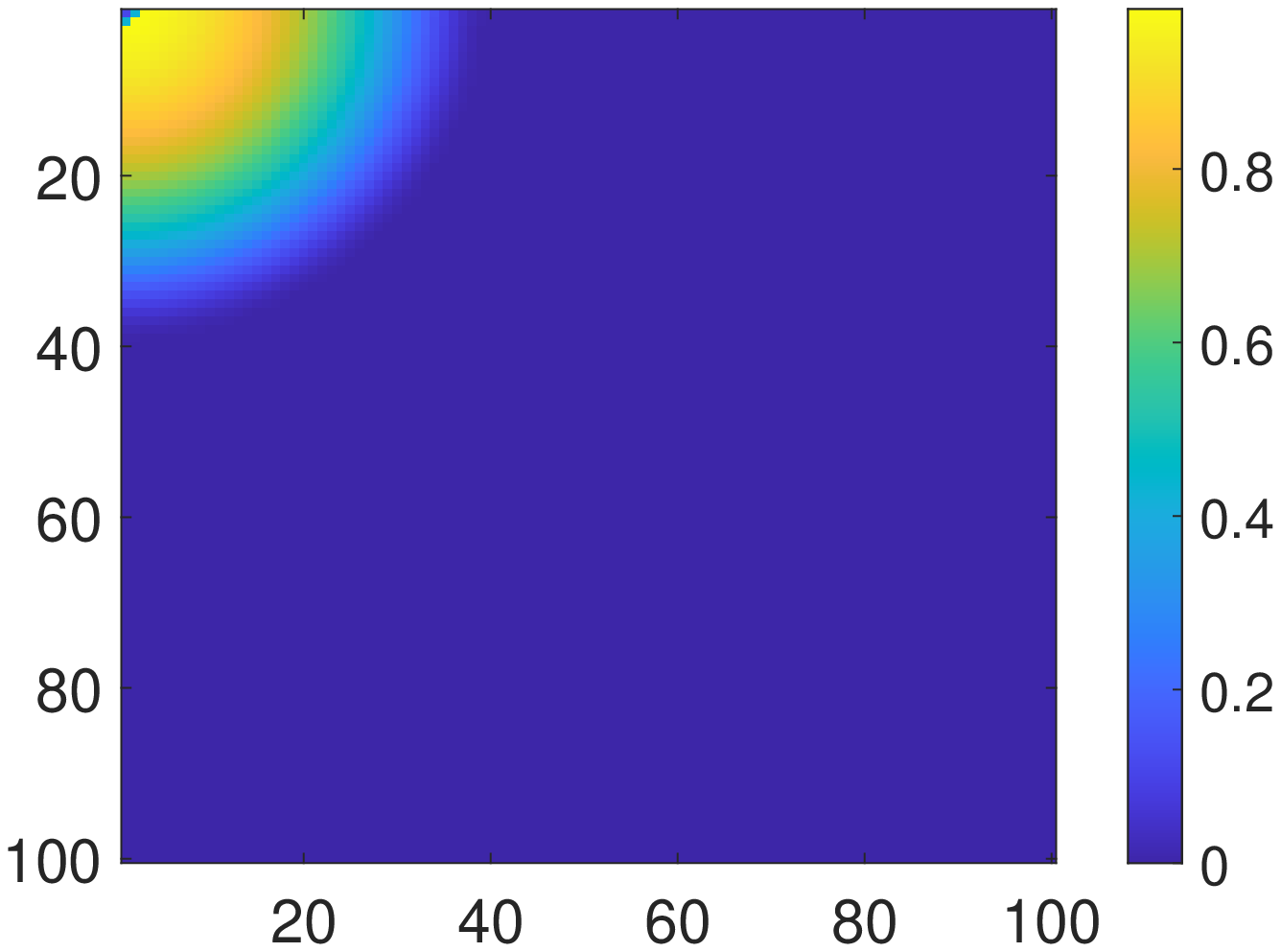}
\end{minipage}
\begin{minipage}[t]{0.2\textwidth}
         \centering
         \includegraphics[width=3cm,height=2cm]{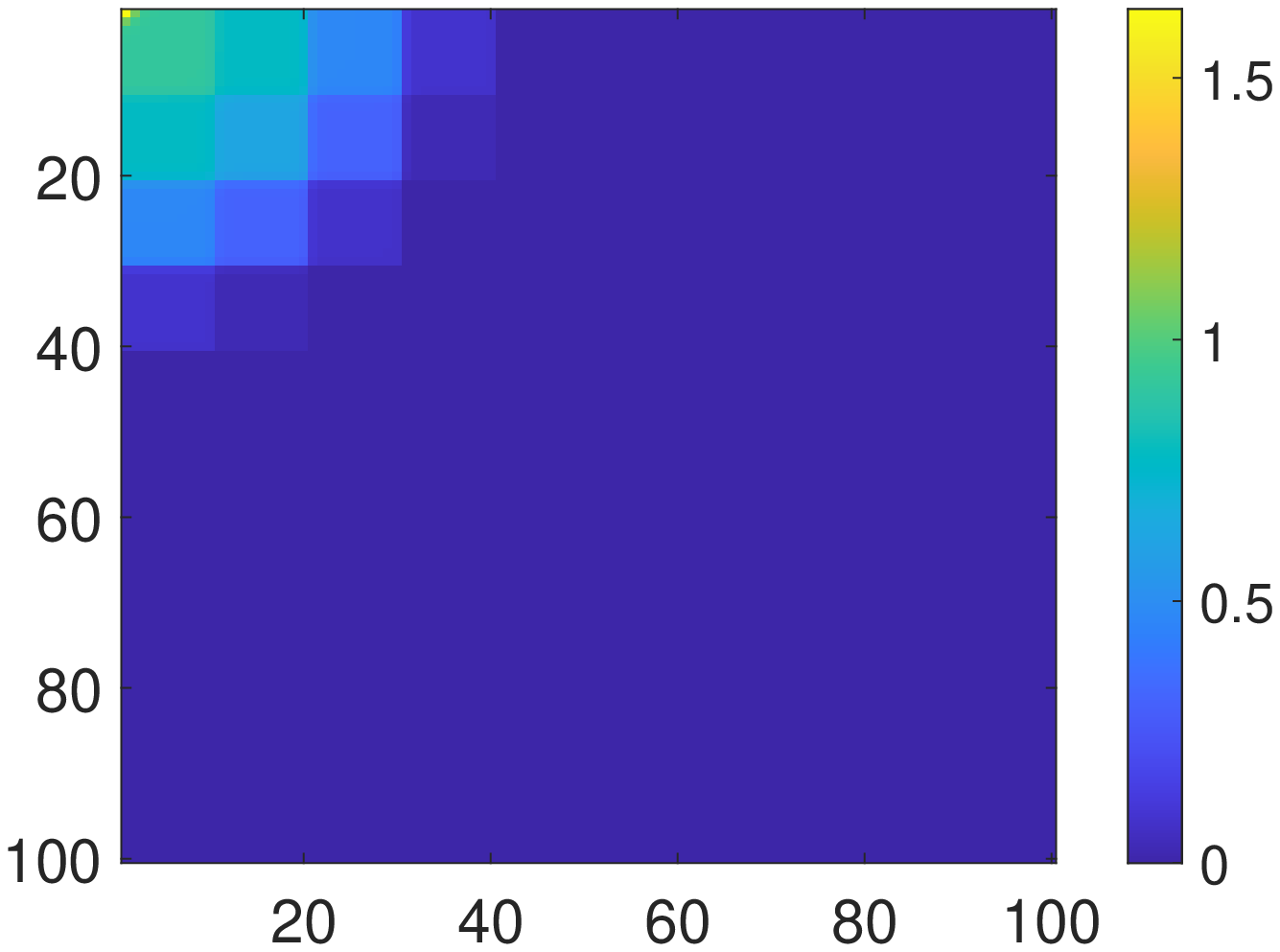}

         \includegraphics[width=3cm,height=2cm]{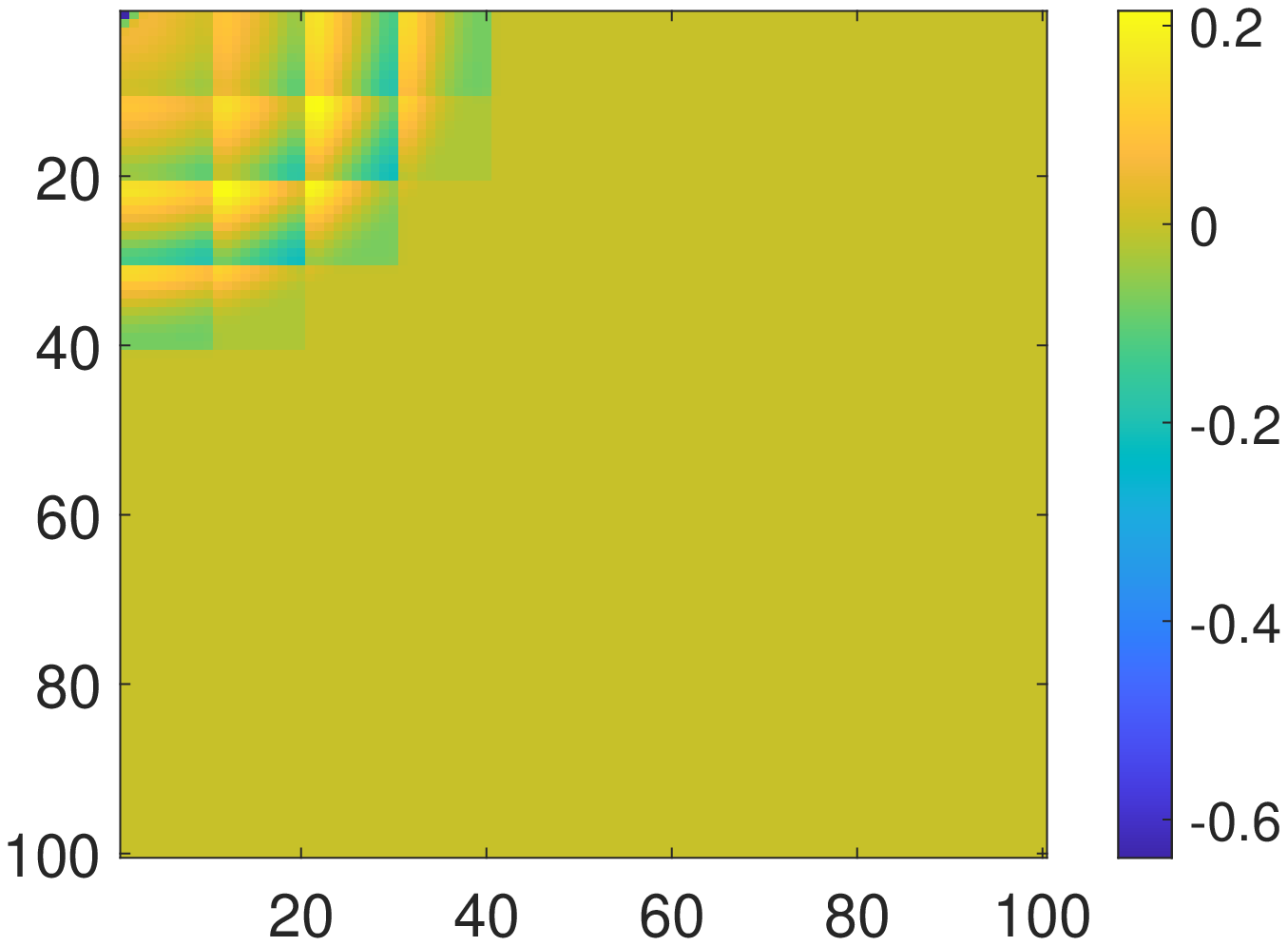}
\end{minipage}
\begin{minipage}[t]{0.2\textwidth}
         \centering
         \includegraphics[width=3cm,height=2cm]{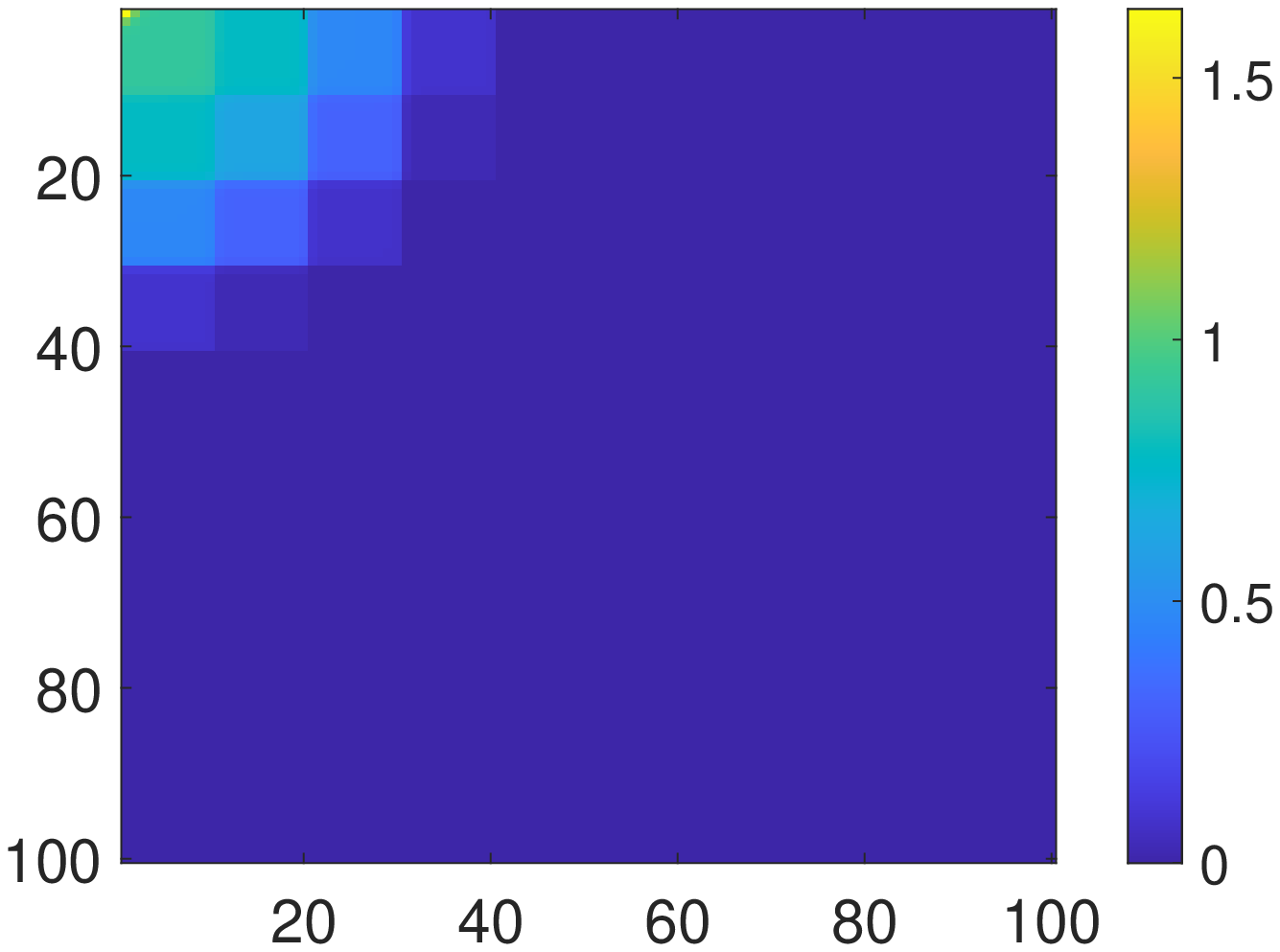}
         
         \includegraphics[width=3cm,height=2cm]{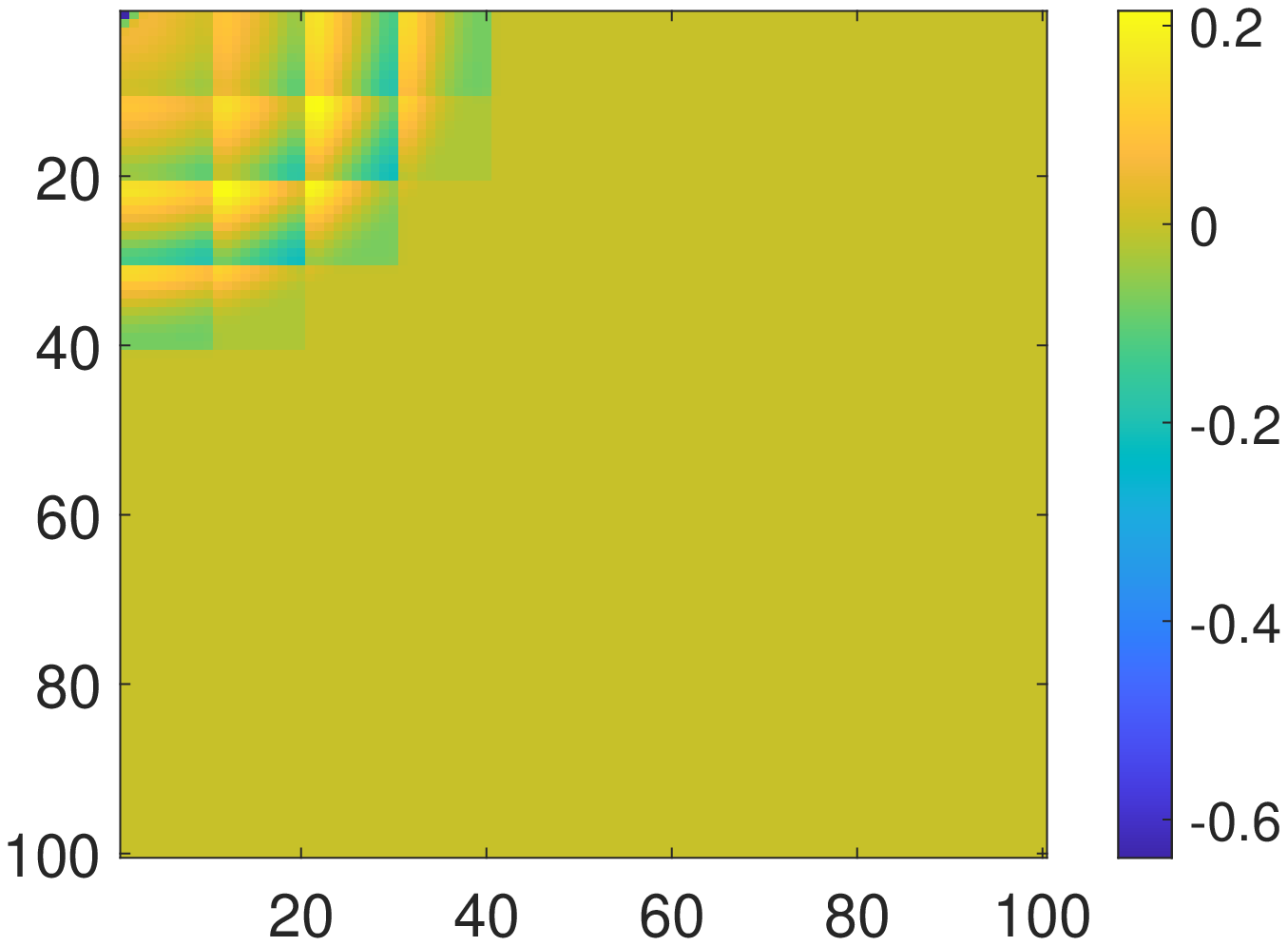}
\end{minipage}
\begin{minipage}[t]{0.2\textwidth}
         \centering
         \includegraphics[width=3cm,height=2cm]{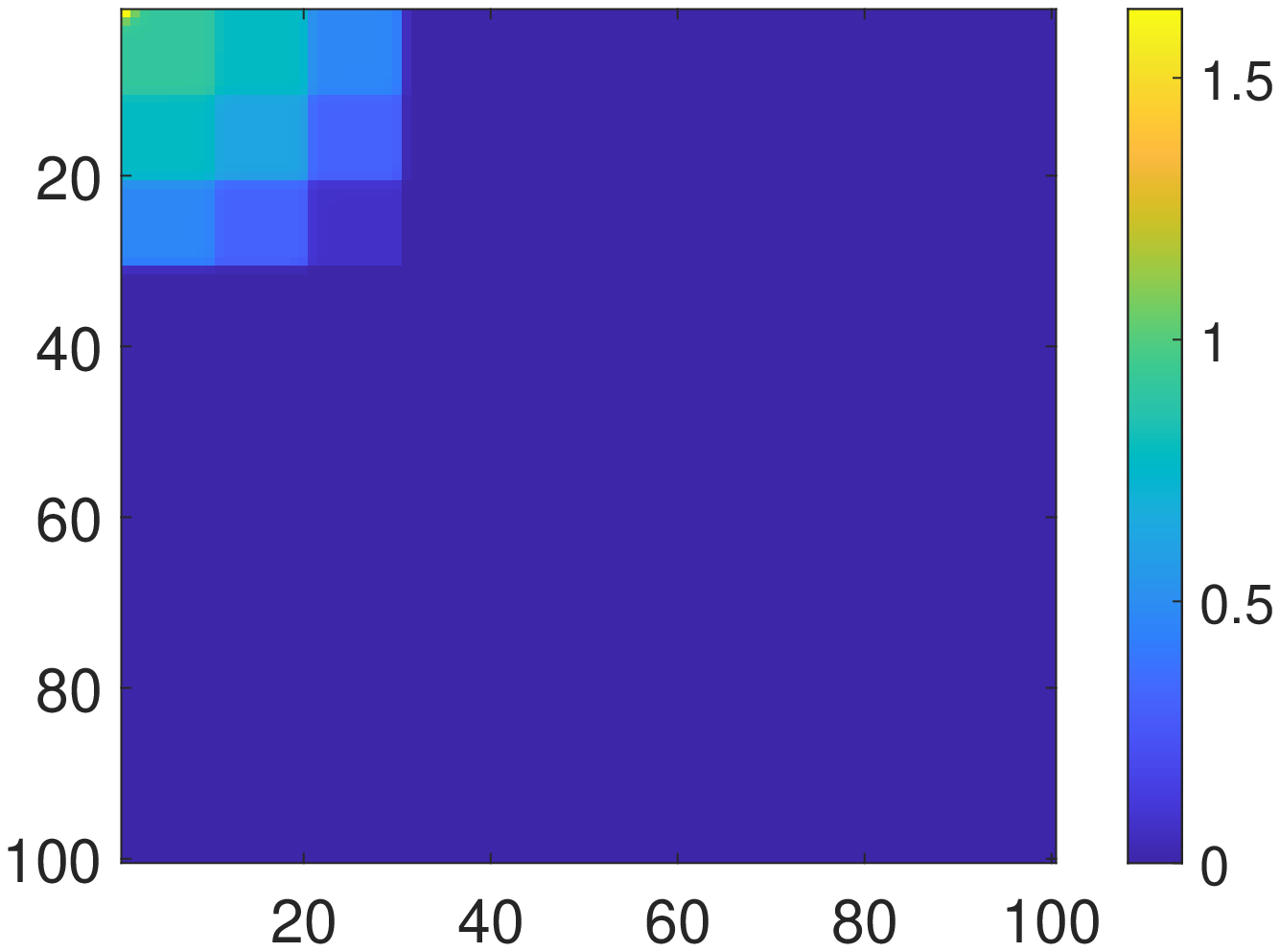}
         
         \includegraphics[width=3cm,height=2cm]{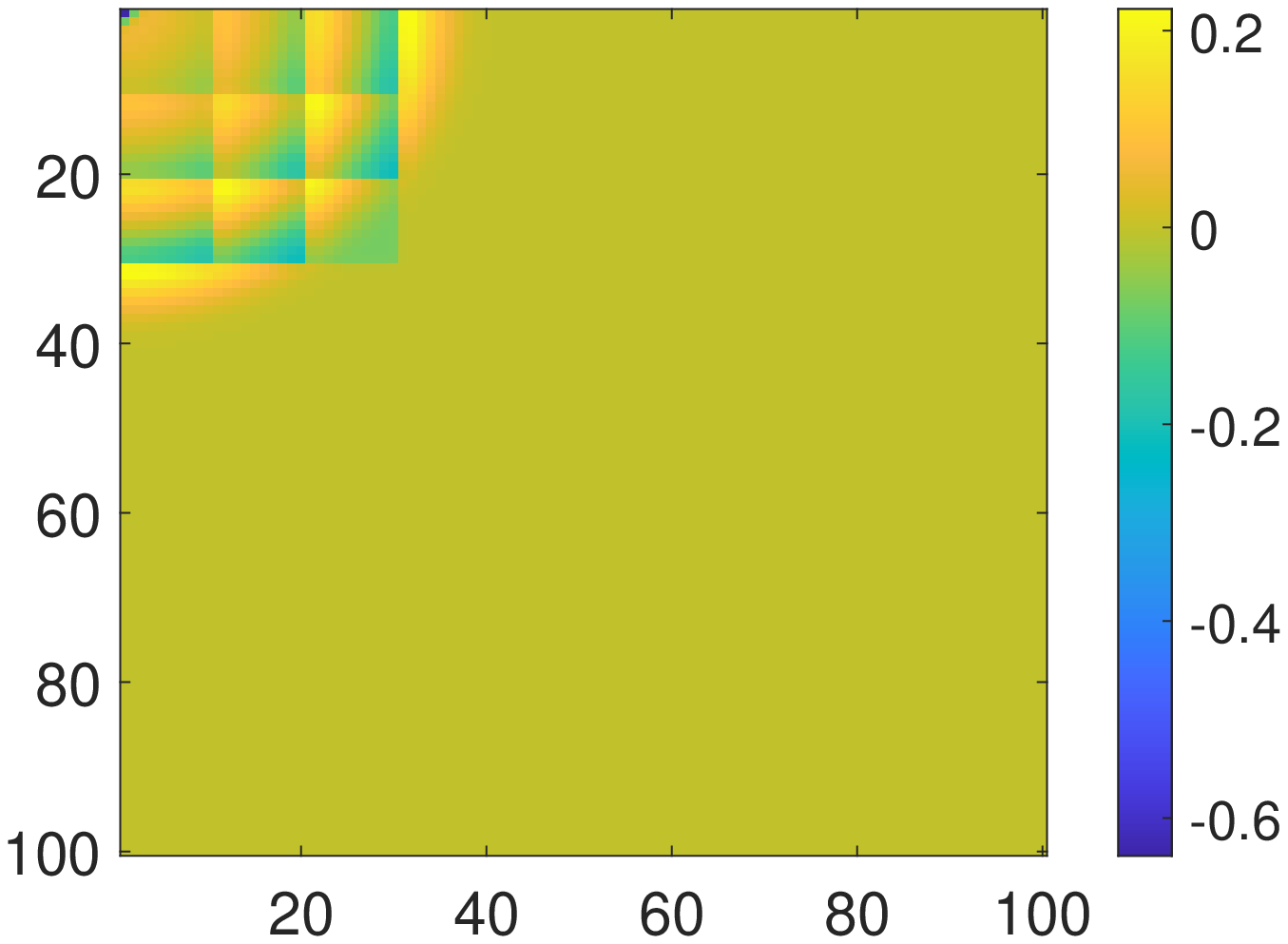}
\end{minipage}
\caption{Snapshots of $S$ and comparison of the solutions with different data at $T=0.1$.  Top: snapshot  of $S$. Bottom: pointwise error of the solution. Left: two-phase  solution. Middle-Left:
full data. Middle-Right: $\frac{1}{4}$ data.
 Right: $\frac{1}{16}$ data}
 \label{Sub1}
\end{figure}

\begin{figure}[!h]
\begin{minipage}[t]{0.2\textwidth}
         \centering
         \includegraphics[width=3cm,height=2cm]{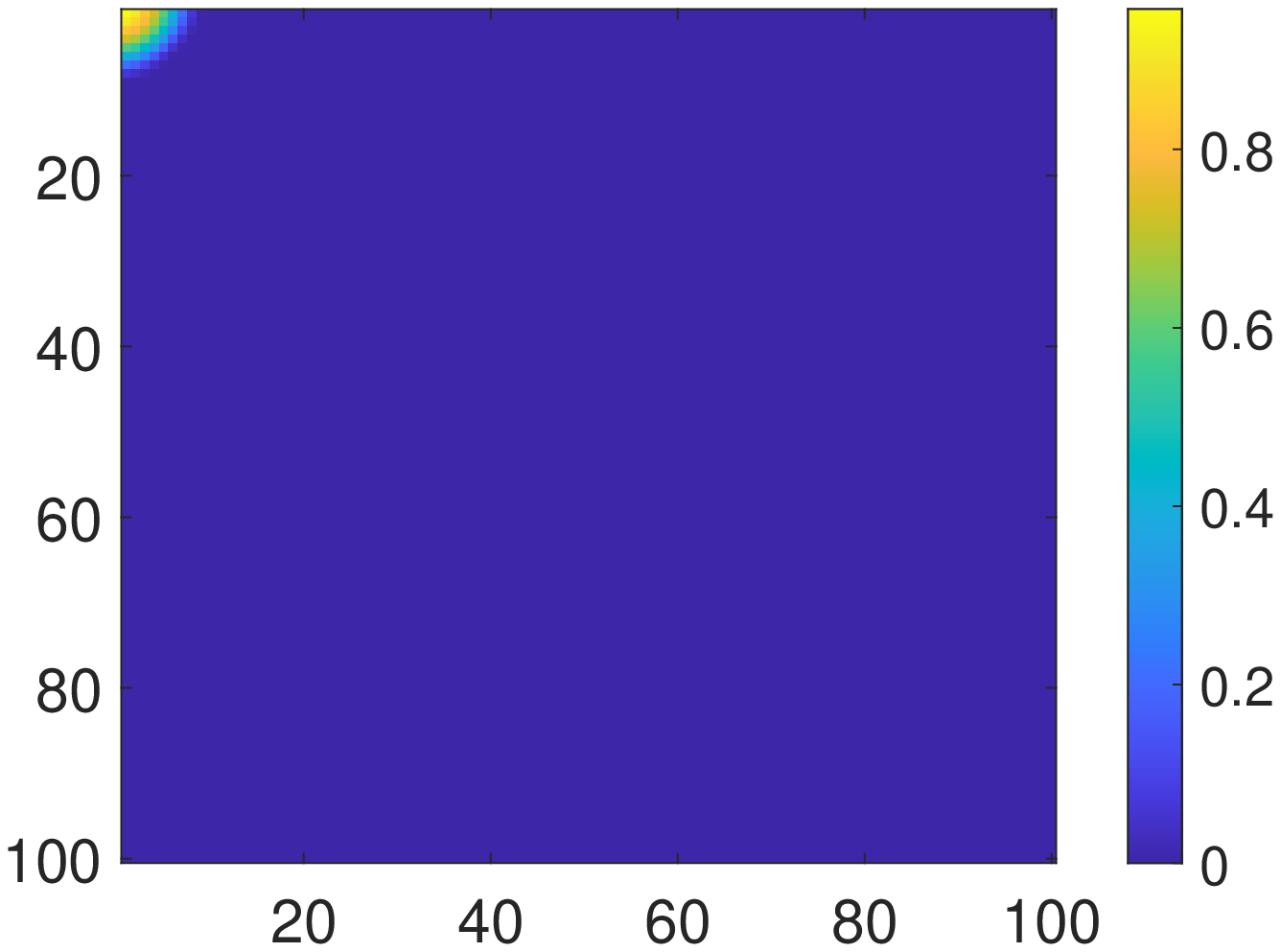}

        \includegraphics[width=3cm,height=2cm]{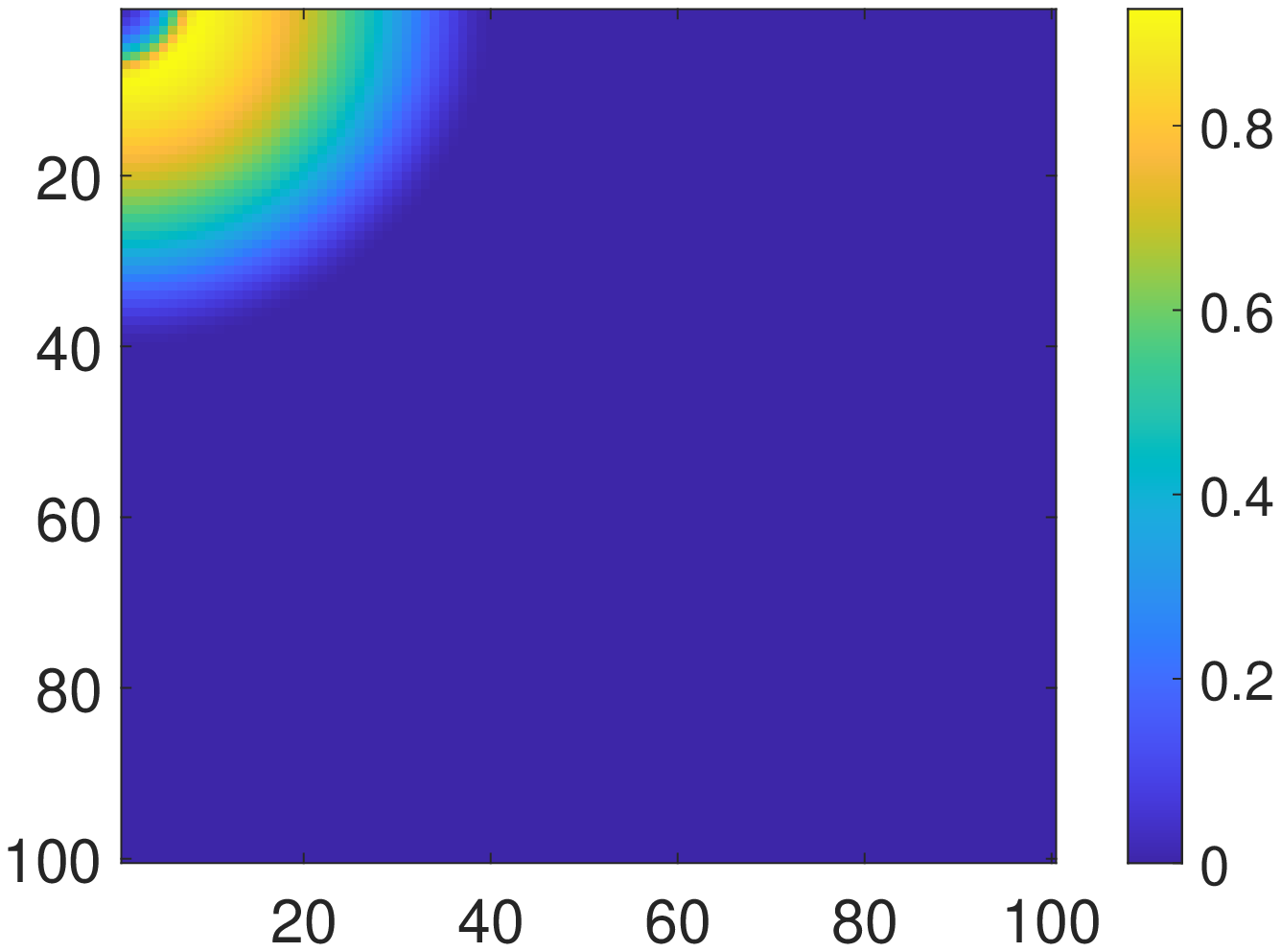}
\end{minipage}
\begin{minipage}[t]{0.2\textwidth}
         \centering
         \includegraphics[width=3cm,height=2cm]{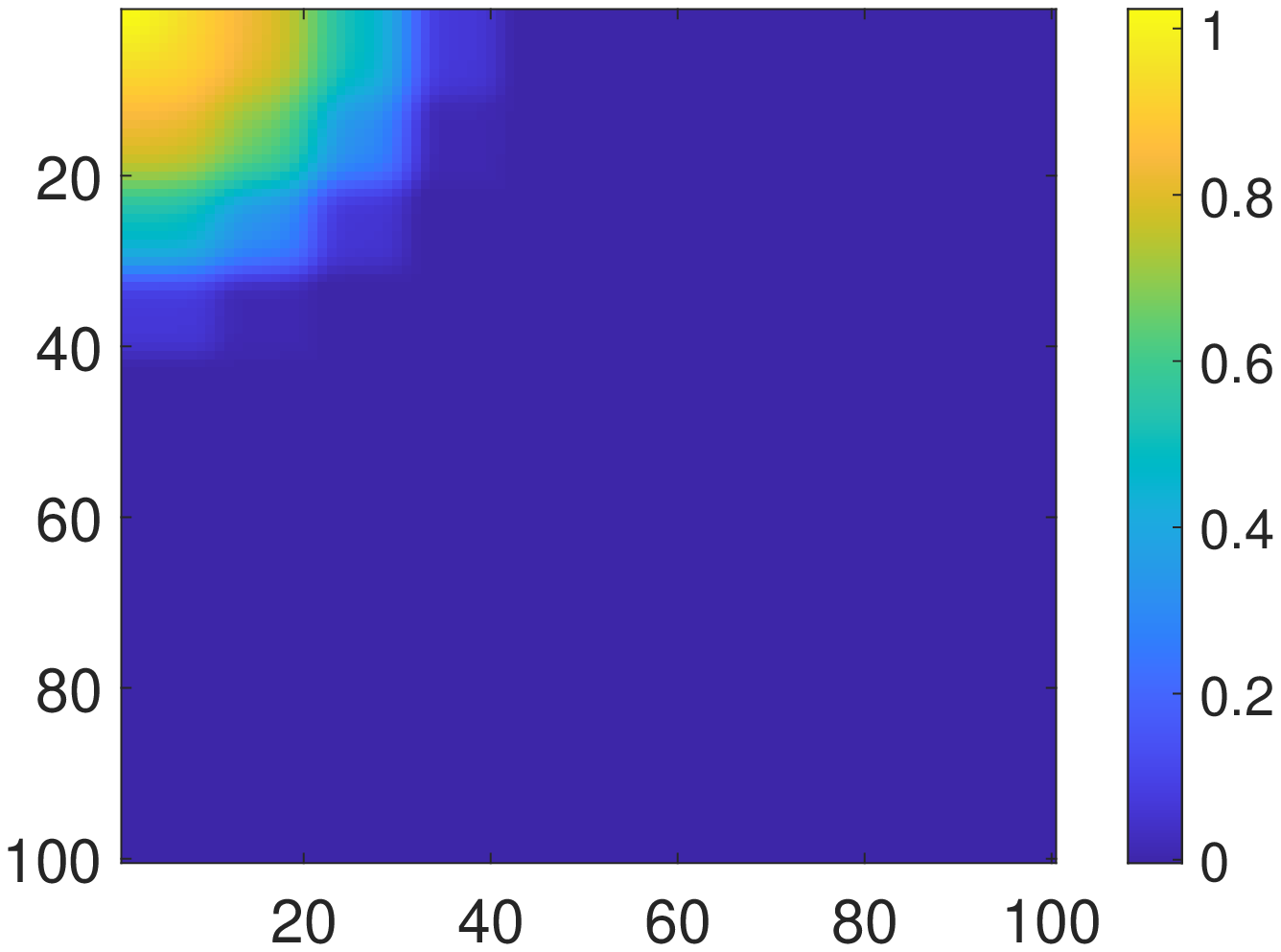}

         \includegraphics[width=3cm,height=2cm]{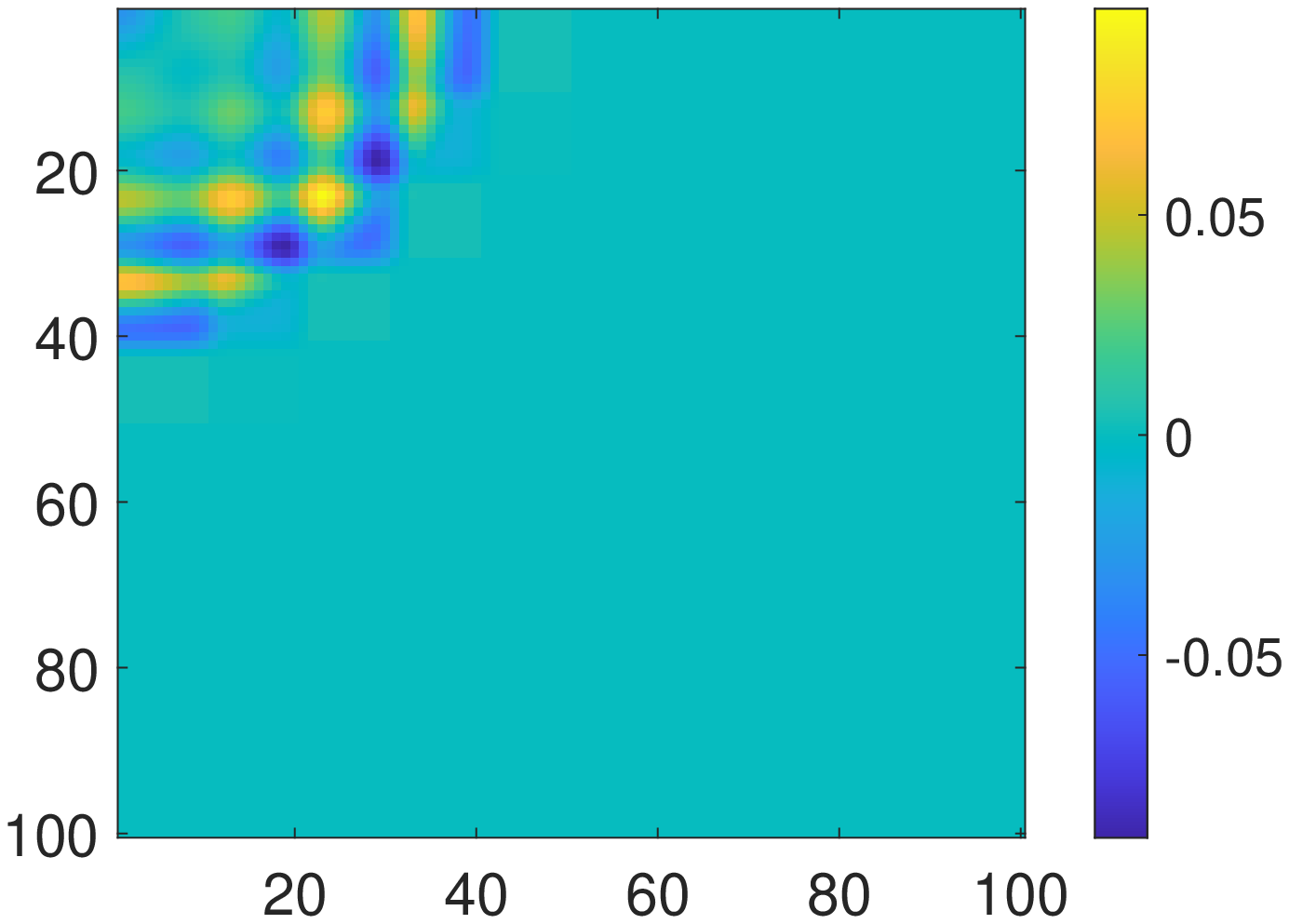}
\end{minipage}
\begin{minipage}[t]{0.2\textwidth}
         \centering
         \includegraphics[width=3cm,height=2cm]{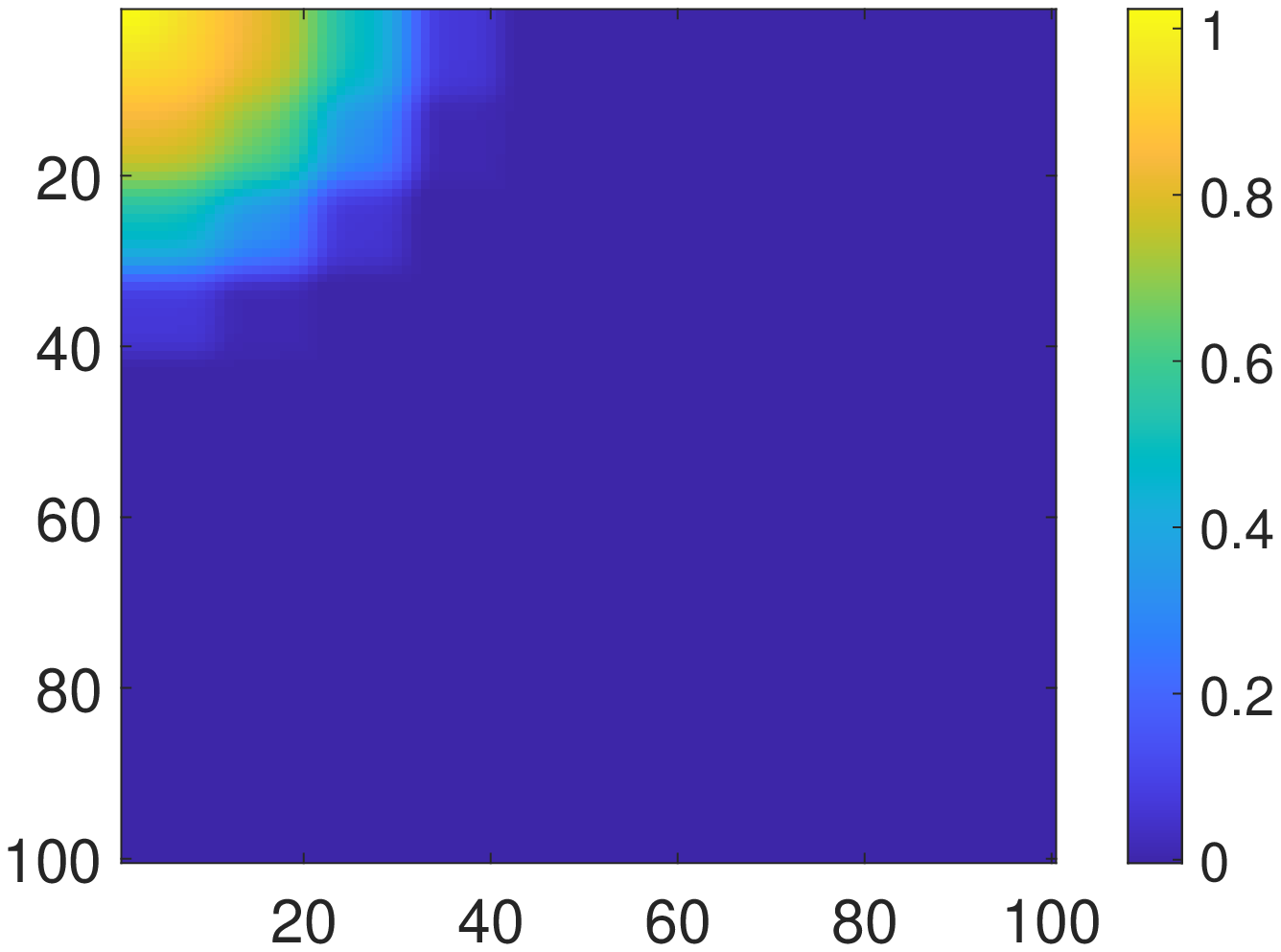}
         
         \includegraphics[width=3cm,height=2cm]{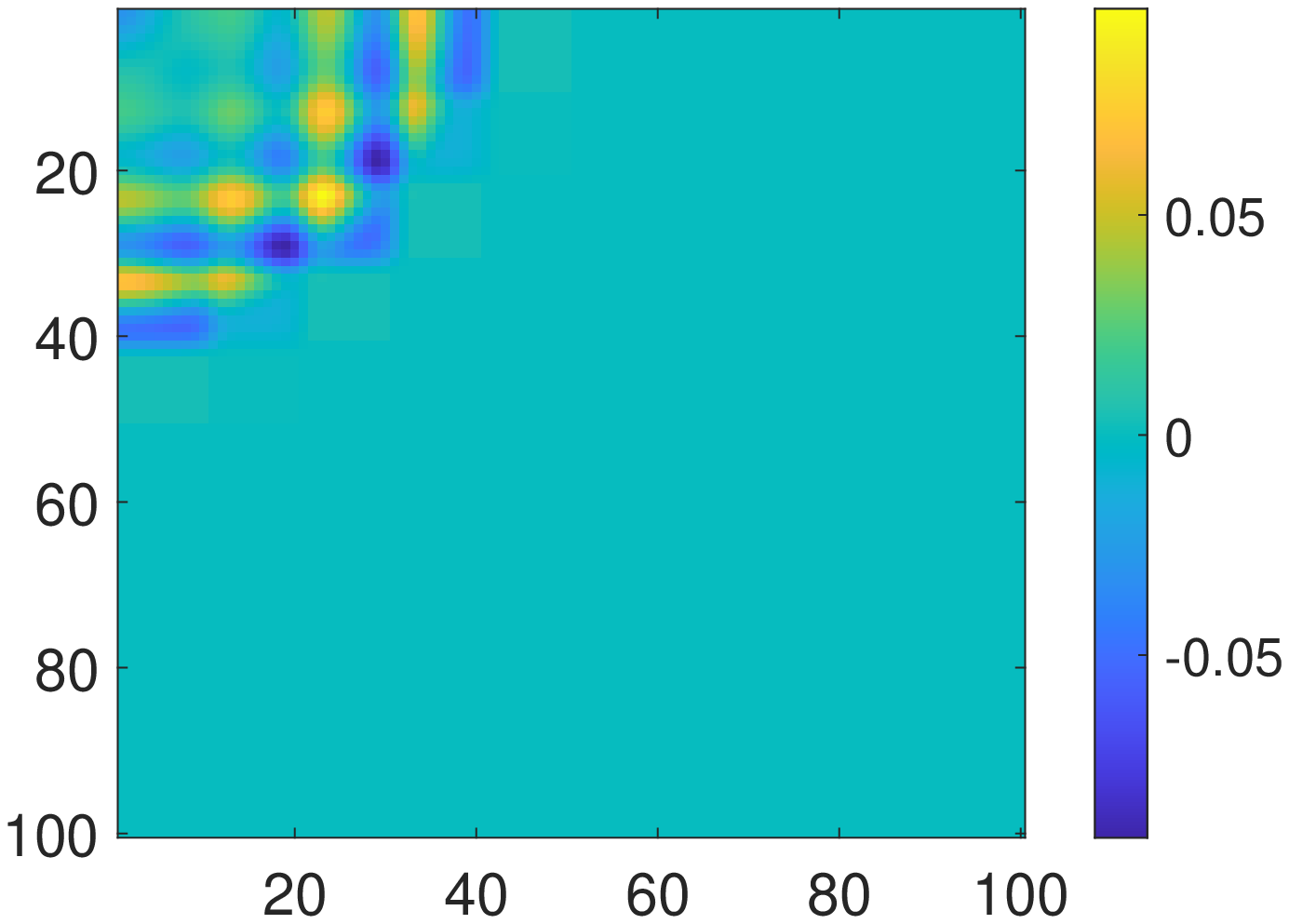}
\end{minipage}
\begin{minipage}[t]{0.2\textwidth}
         \centering
         \includegraphics[width=3cm,height=2cm]{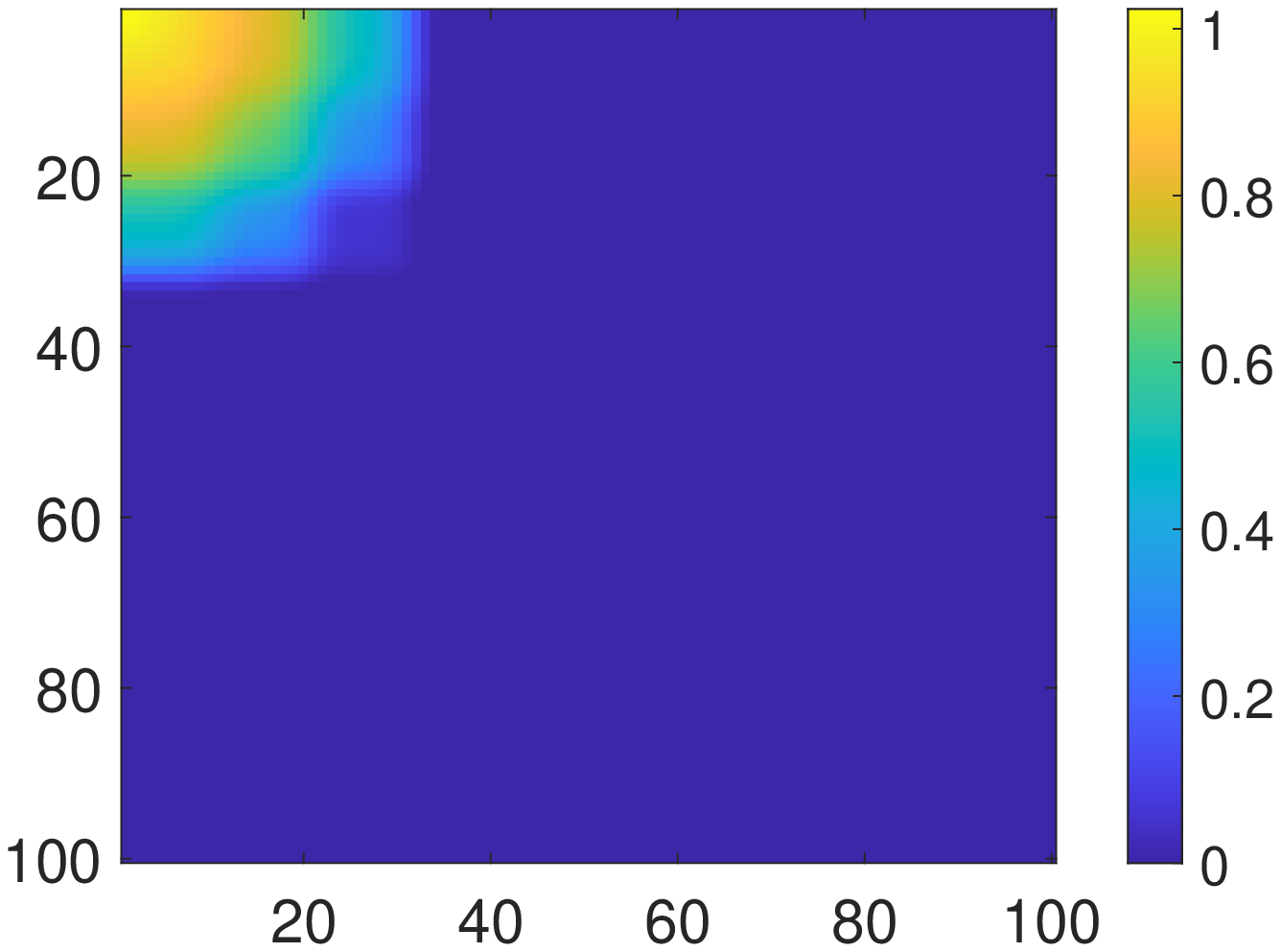}
         
         \includegraphics[width=3cm,height=2cm]{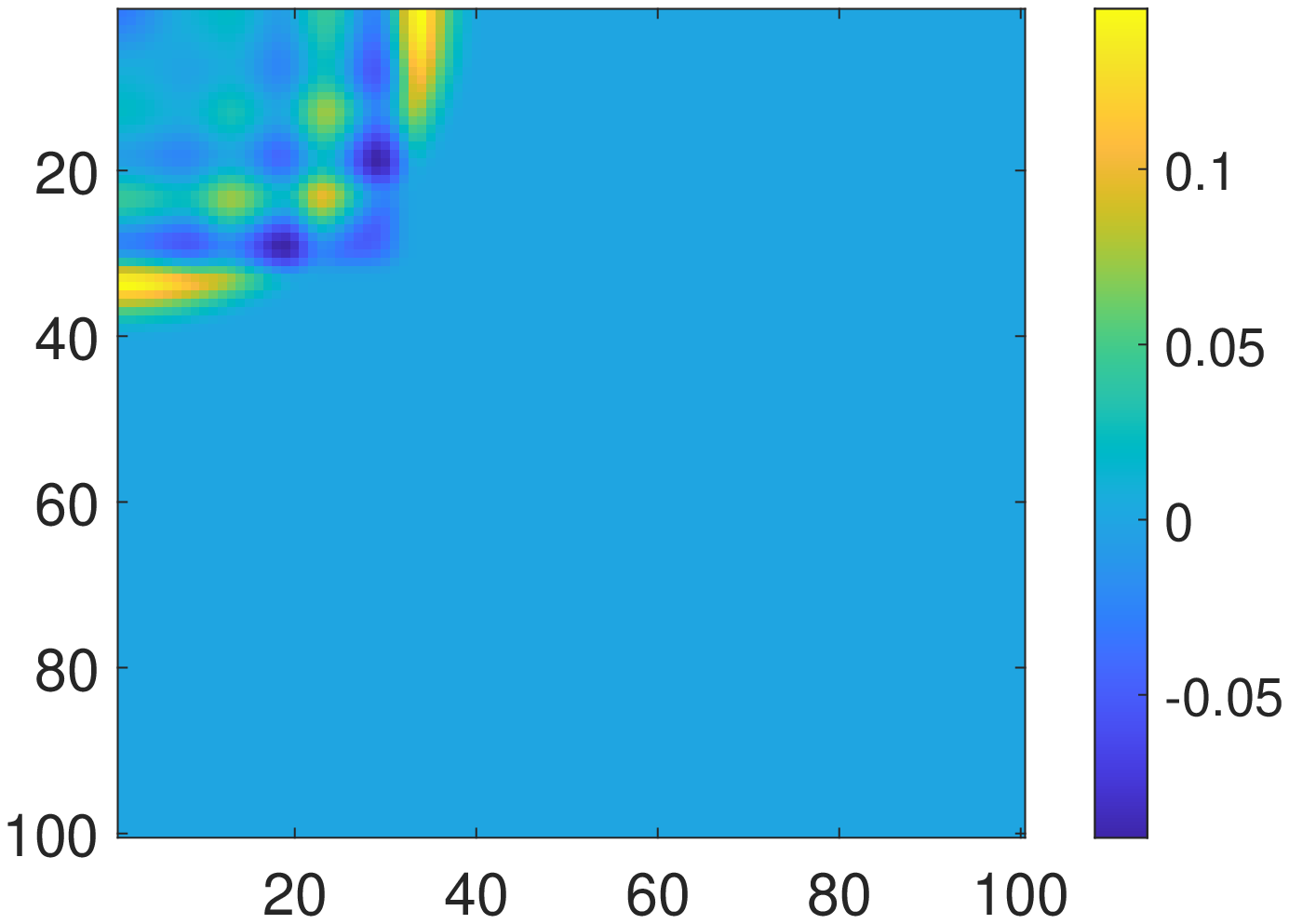}
\end{minipage}
\caption{Snapshots  of $S$ and comparison of the solutions with different data at $T=1$. Top: snapshot  of $S$ . Bottom: pointwise error of the solution. Left: two-phase  solution. Middle-Left:
full data. Middle-Right: $\frac{1}{4}$ data.
 Right: $\frac{1}{16}$ data}
 \label{Sub2}
\end{figure}

\begin{figure}[!h]
\begin{minipage}[t]{0.2\textwidth}
         \centering
         \includegraphics[width=3cm,height=2cm]{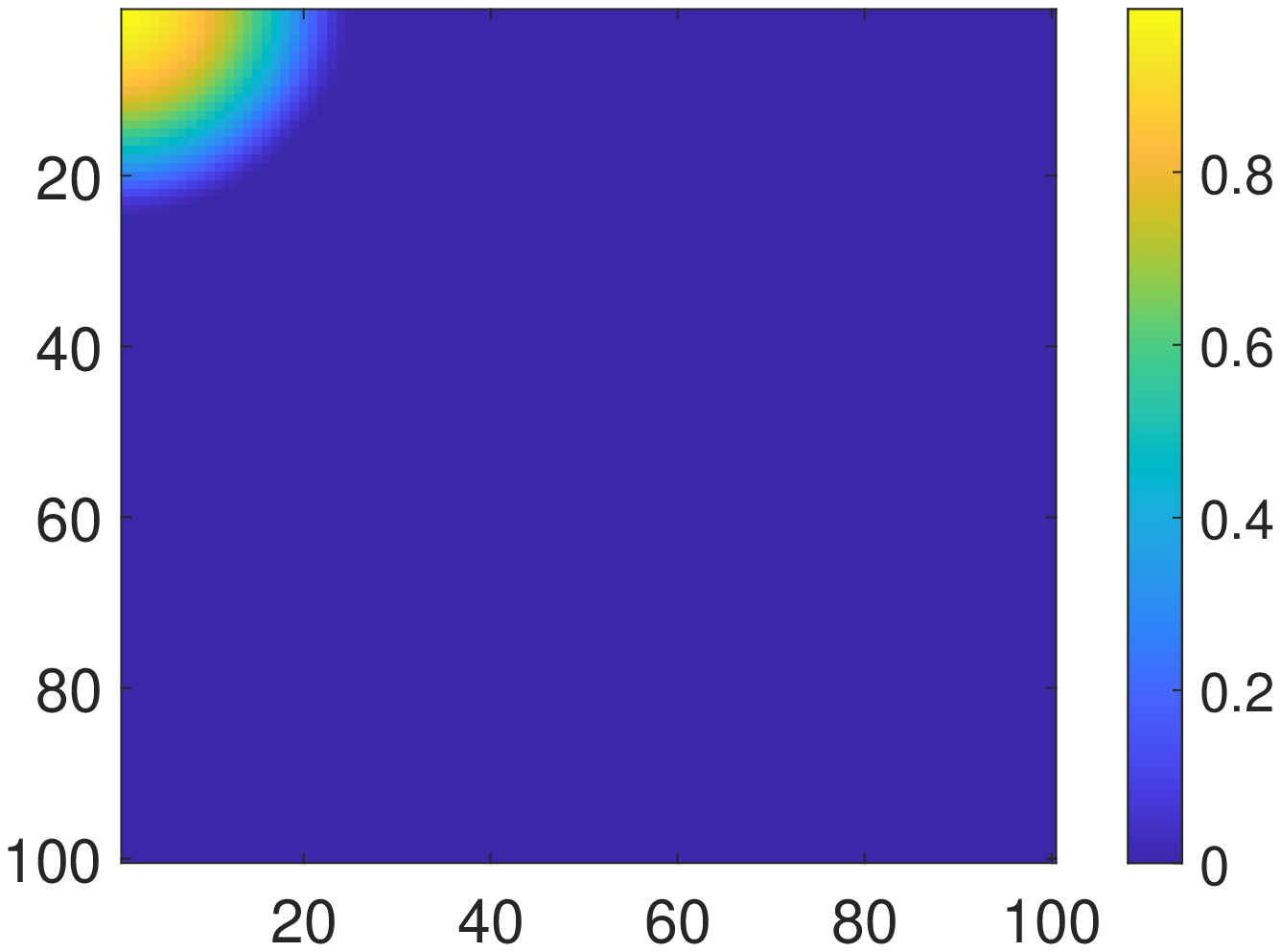}

        \includegraphics[width=3cm,height=2cm]{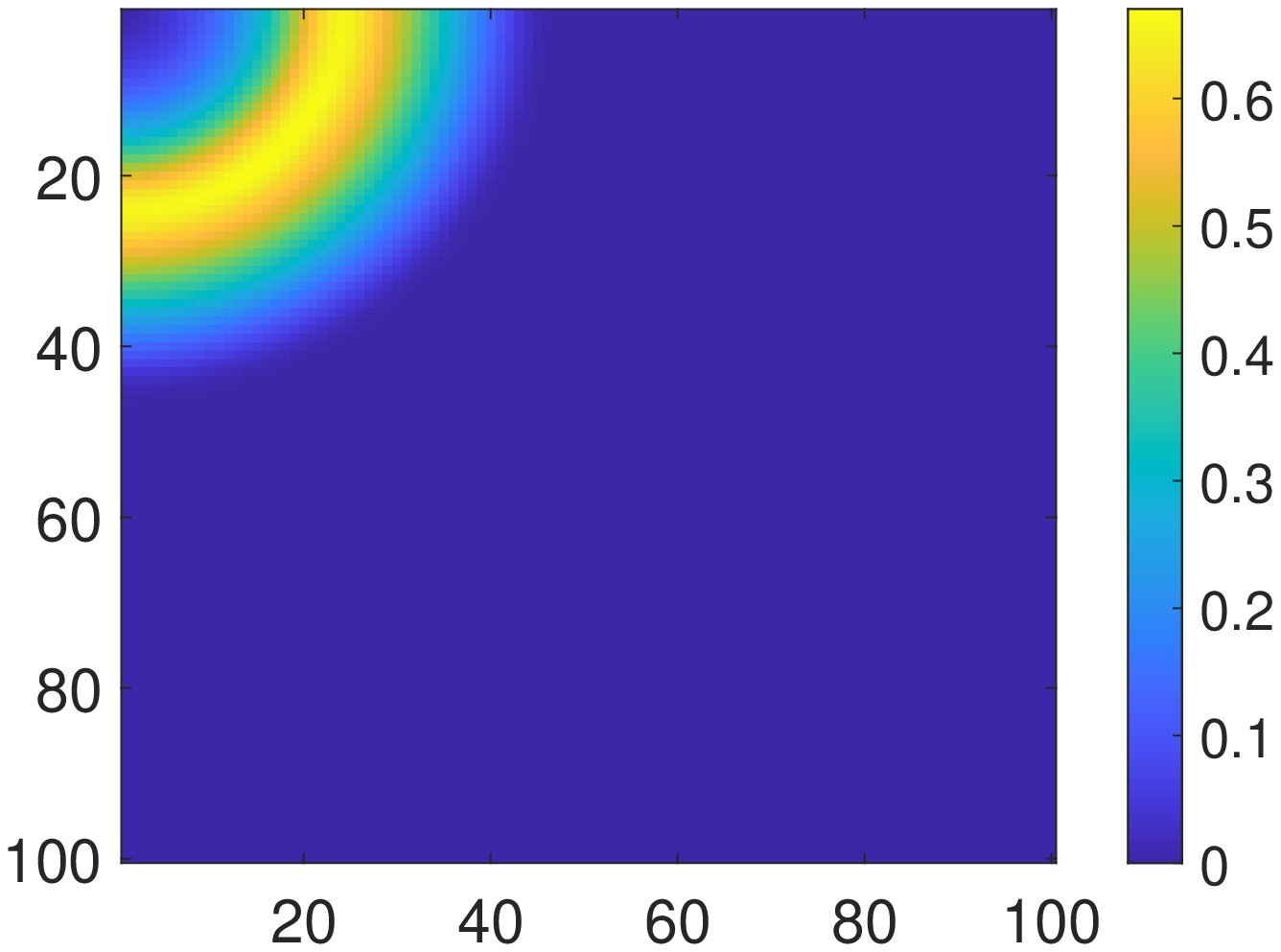}
\end{minipage}
\begin{minipage}[t]{0.2\textwidth}
         \centering
         \includegraphics[width=3cm,height=2cm]{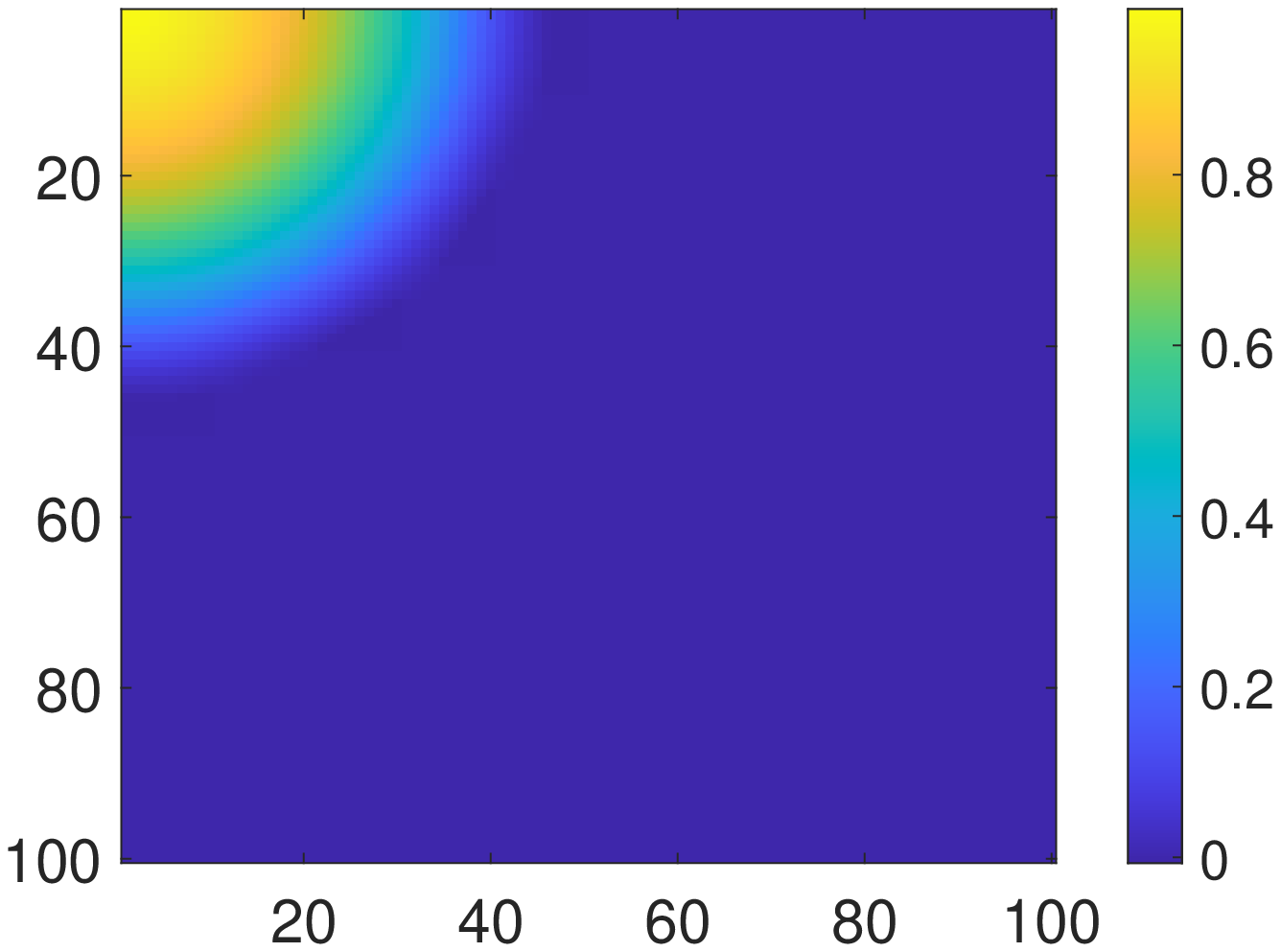}

         \includegraphics[width=3cm,height=2cm]{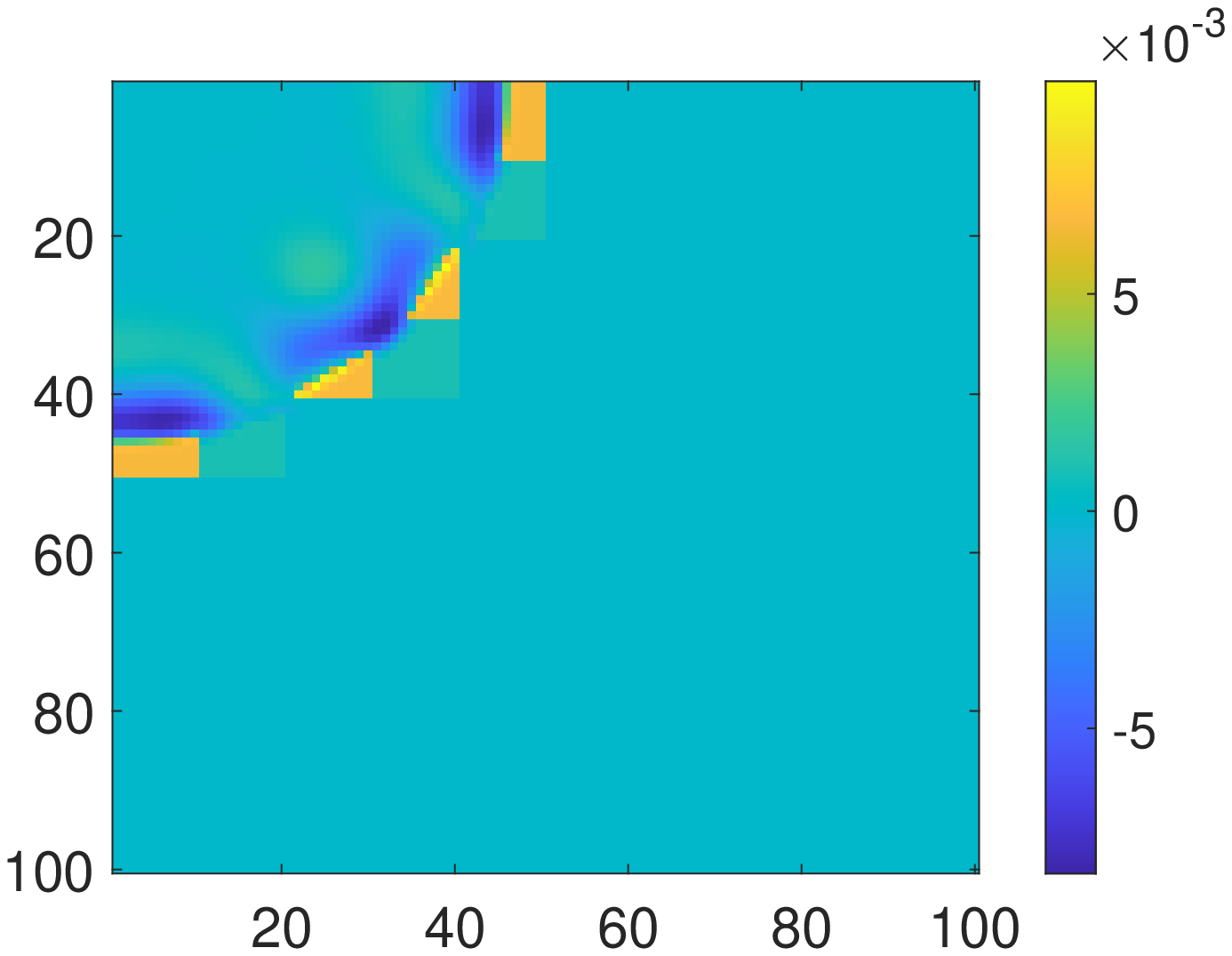}
\end{minipage}
\begin{minipage}[t]{0.2\textwidth}
         \centering
         \includegraphics[width=3cm,height=2cm]{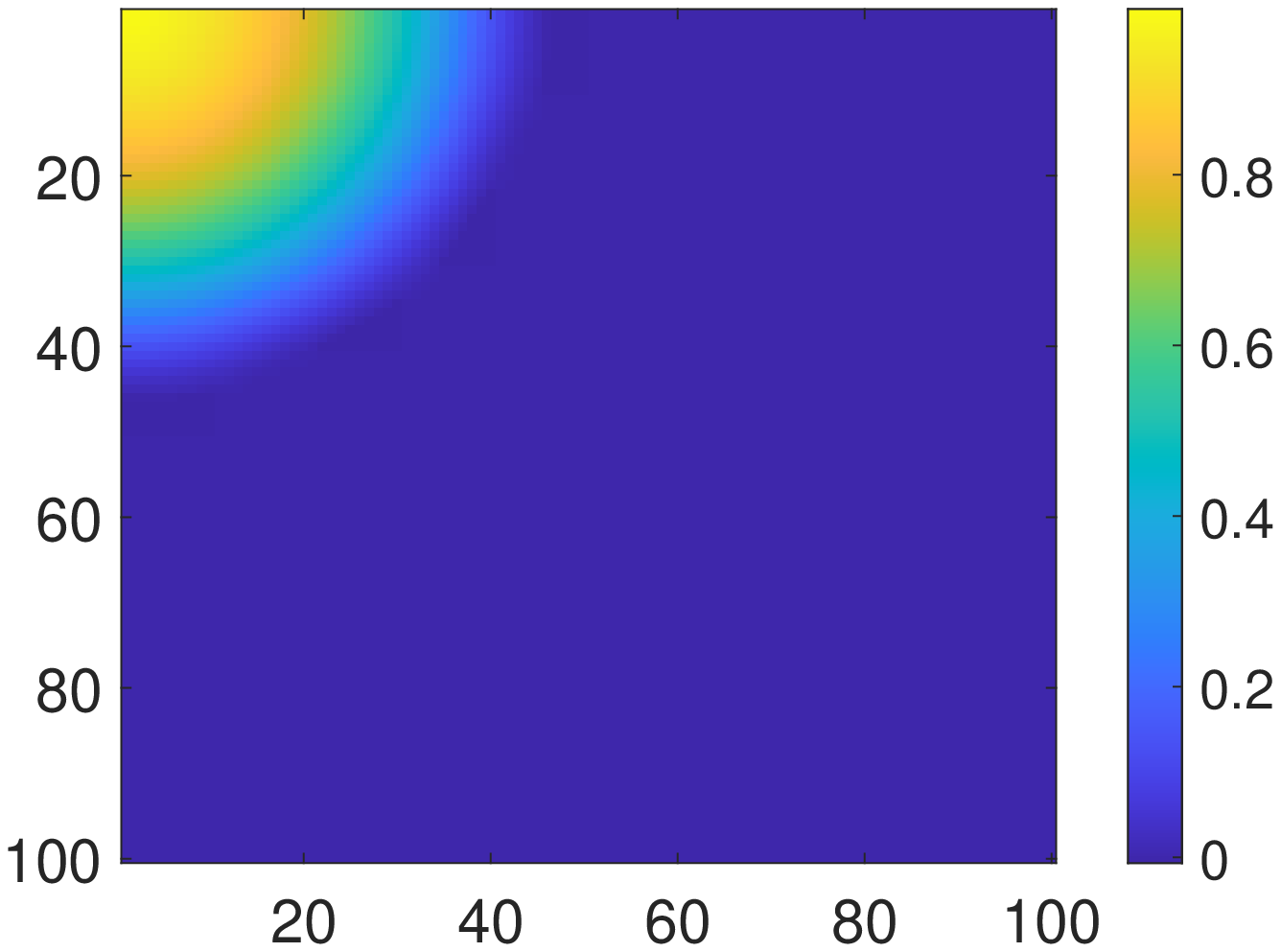}
         
         \includegraphics[width=3cm,height=2cm]{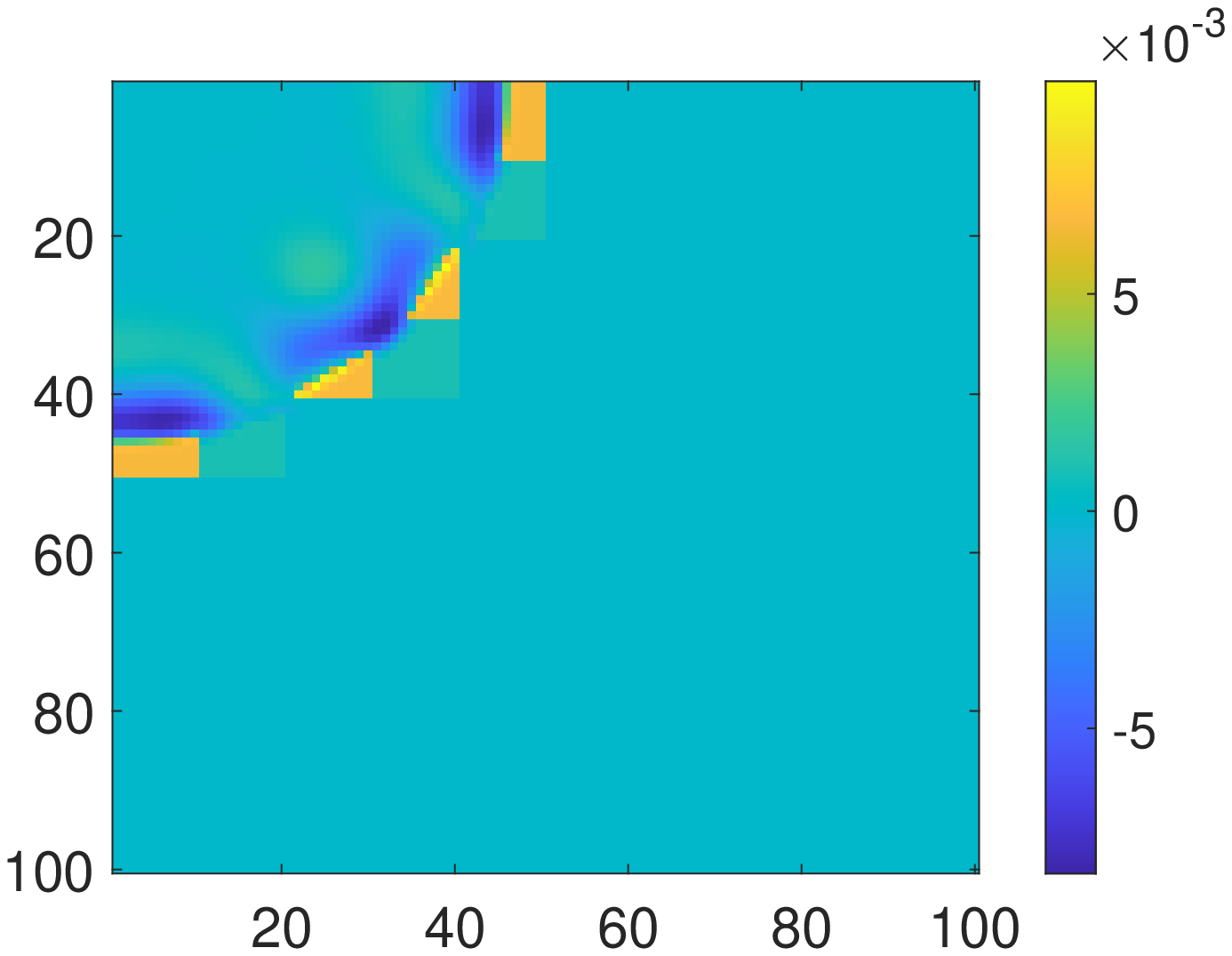}
\end{minipage}
\begin{minipage}[t]{0.2\textwidth}
         \centering
         \includegraphics[width=3cm,height=2cm]{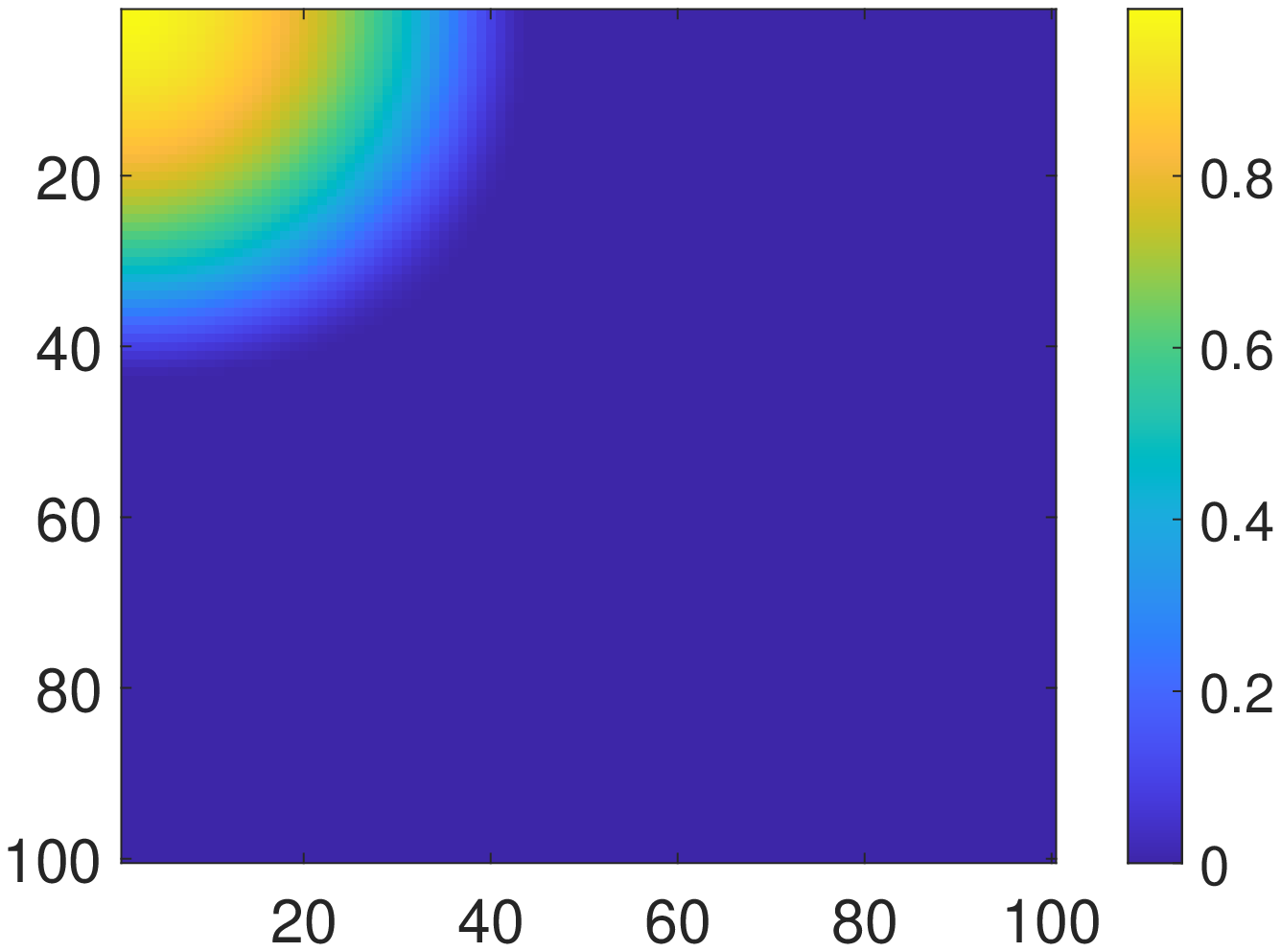}
         
         \includegraphics[width=3cm,height=2cm]{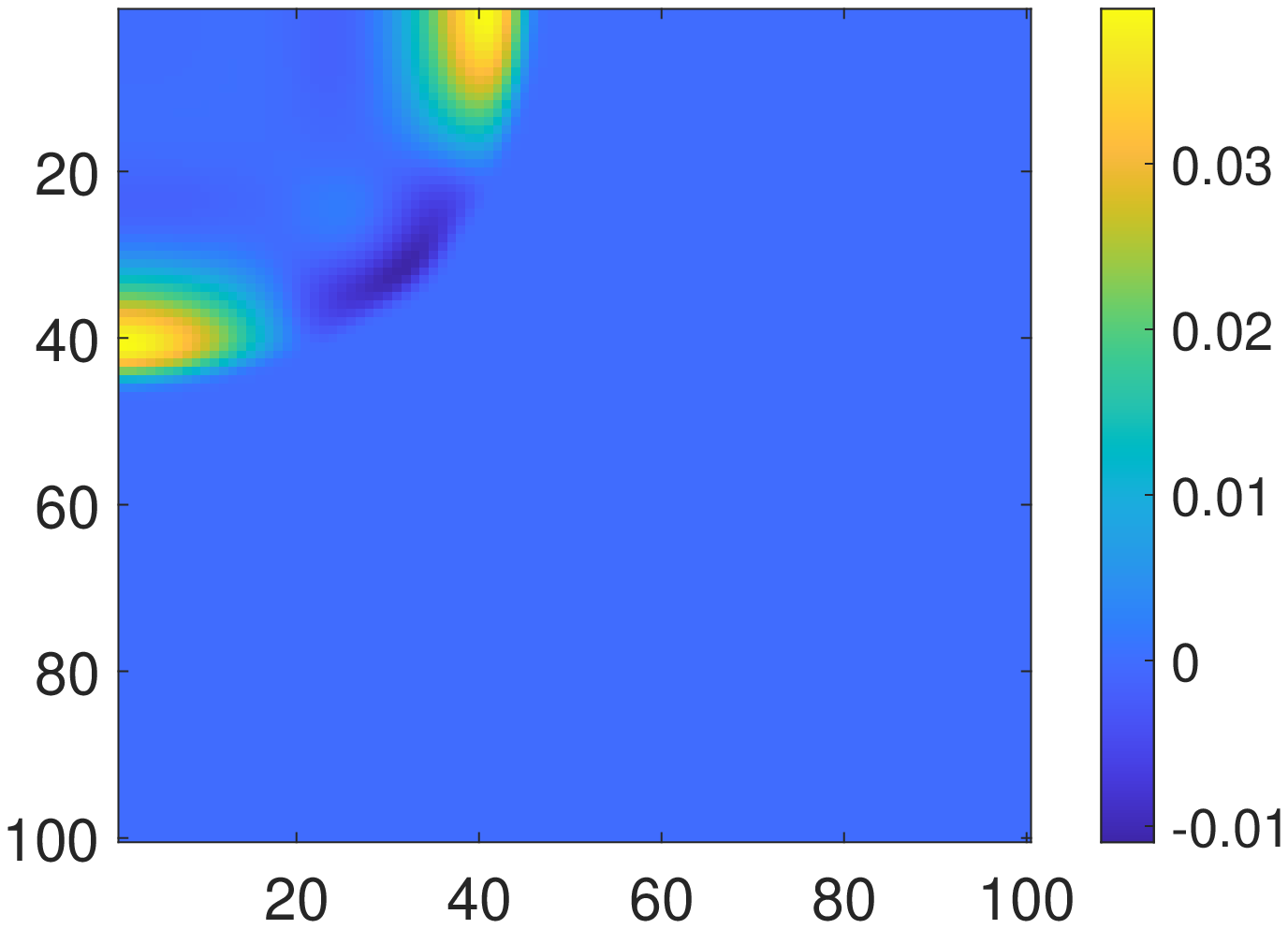}
\end{minipage}
\caption{Snapshots  of $S$ and comparison of the solutions with different data at $T=10$. Top: snapshot  of $S$ . Bottom: pointwise error of the solution.  Left: two-phase  solution. Middle-Left:
full data. Middle-Right: $\frac{1}{4}$ data.
 Right: $\frac{1}{16}$ data}
 \label{Sub3}
\end{figure}

\begin{figure}[!h]
\begin{minipage}[t]{0.2\textwidth}
         \centering
         \includegraphics[width=3cm,height=2cm]{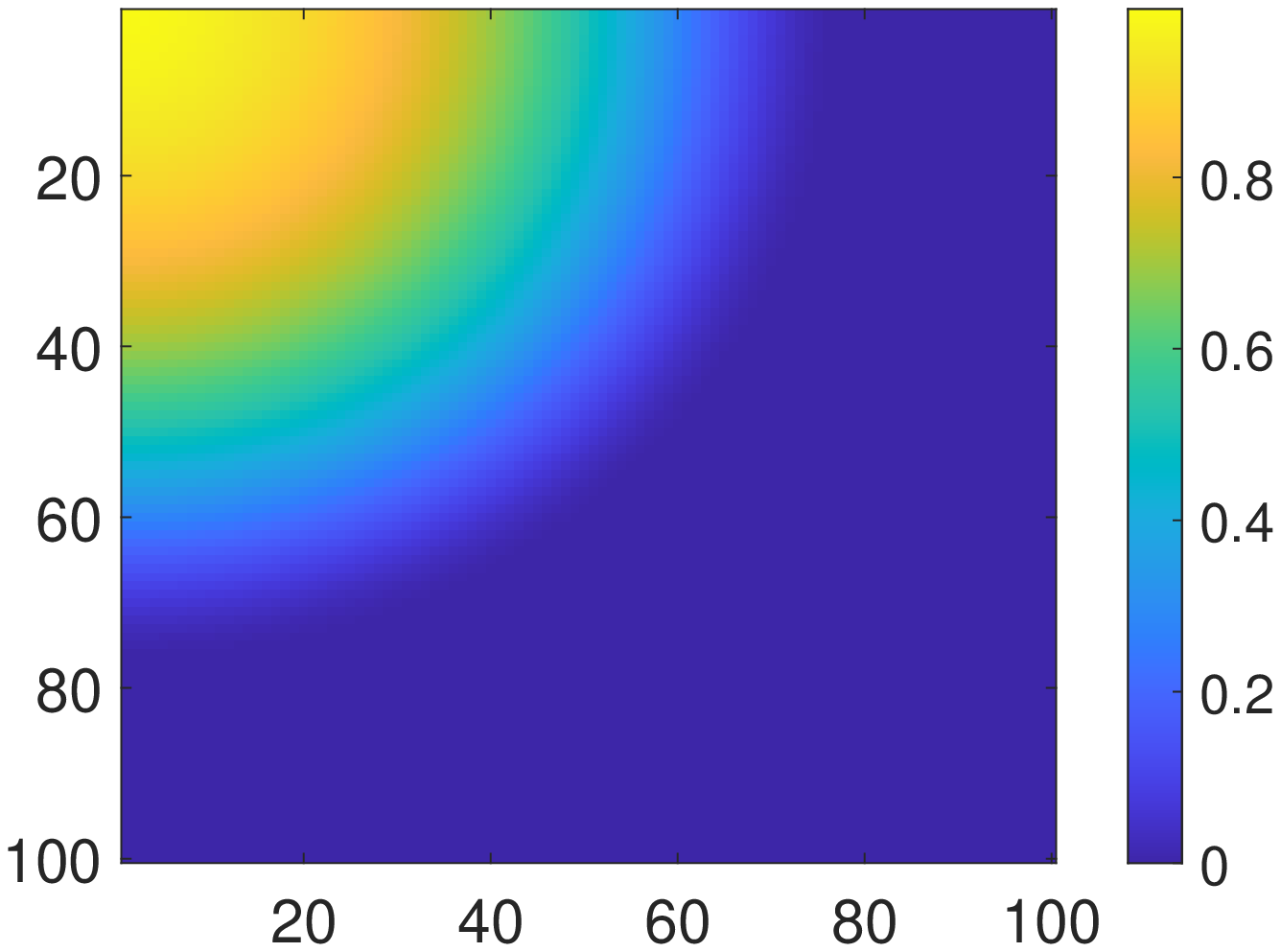}

        \includegraphics[width=3cm,height=2cm]{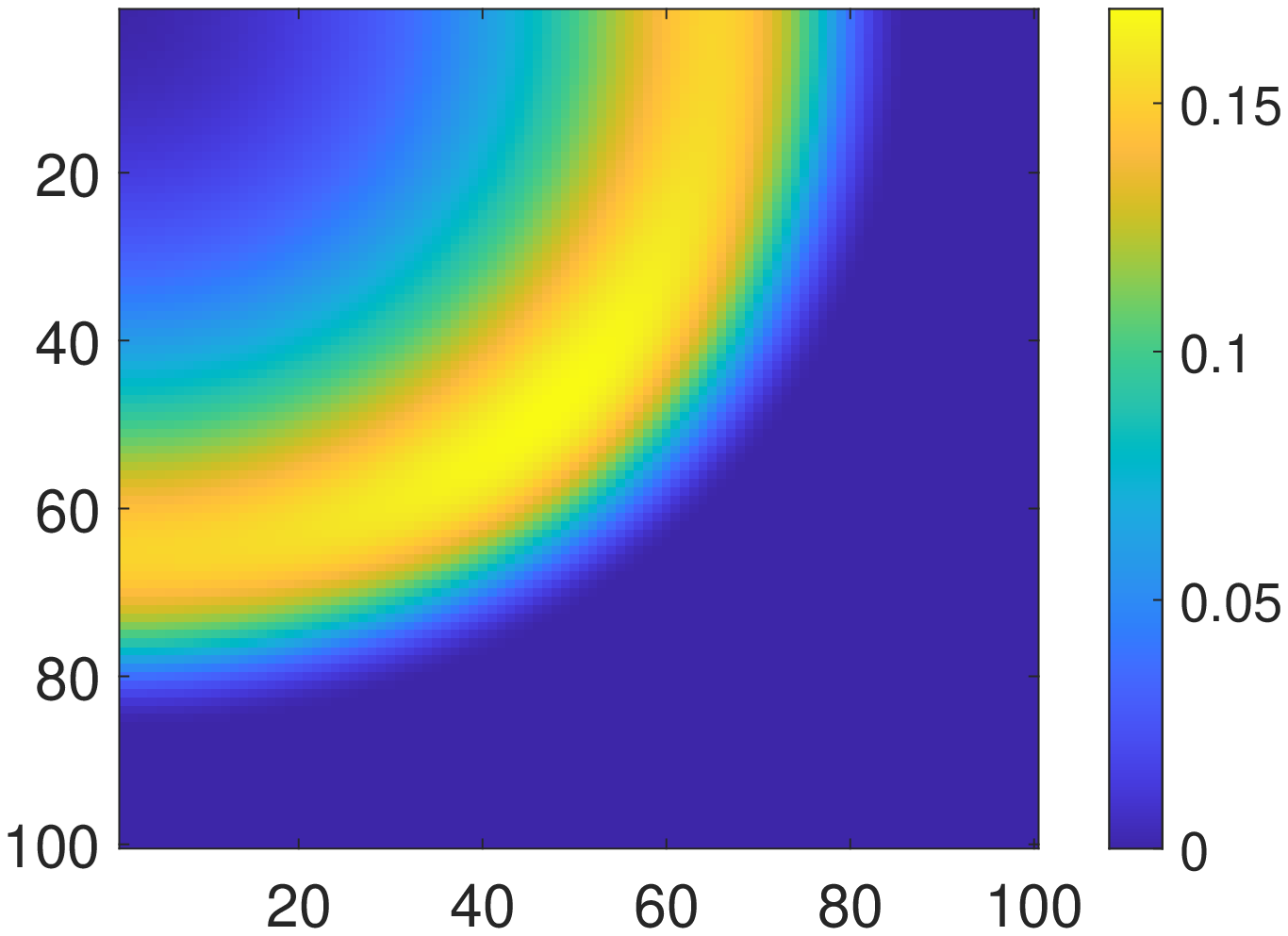}
\end{minipage}
\begin{minipage}[t]{0.2\textwidth}
         \centering
         \includegraphics[width=3cm,height=2cm]{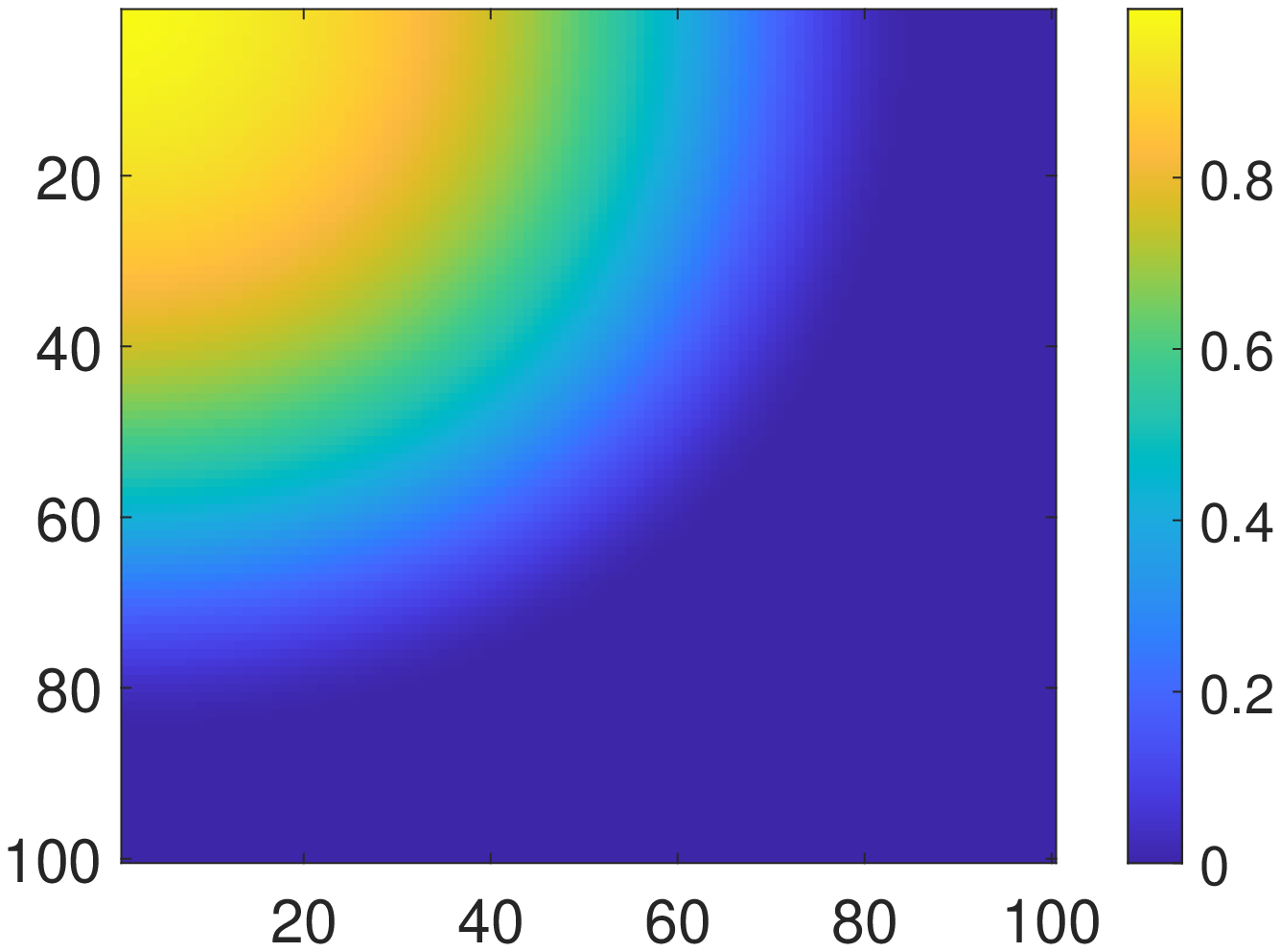}

         \includegraphics[width=3cm,height=2cm]{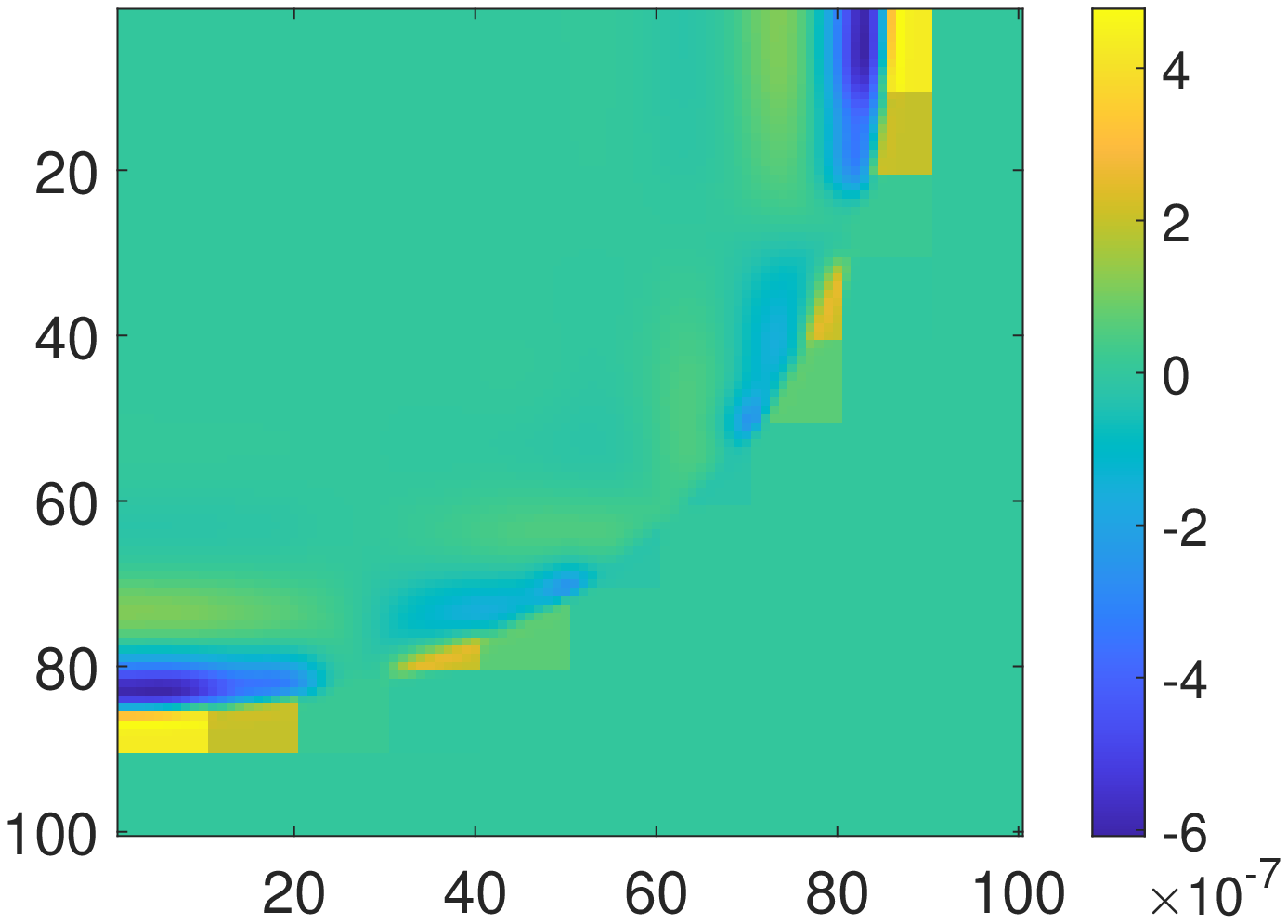}
\end{minipage}
\begin{minipage}[t]{0.2\textwidth}
         \centering
         \includegraphics[width=3cm,height=2cm]{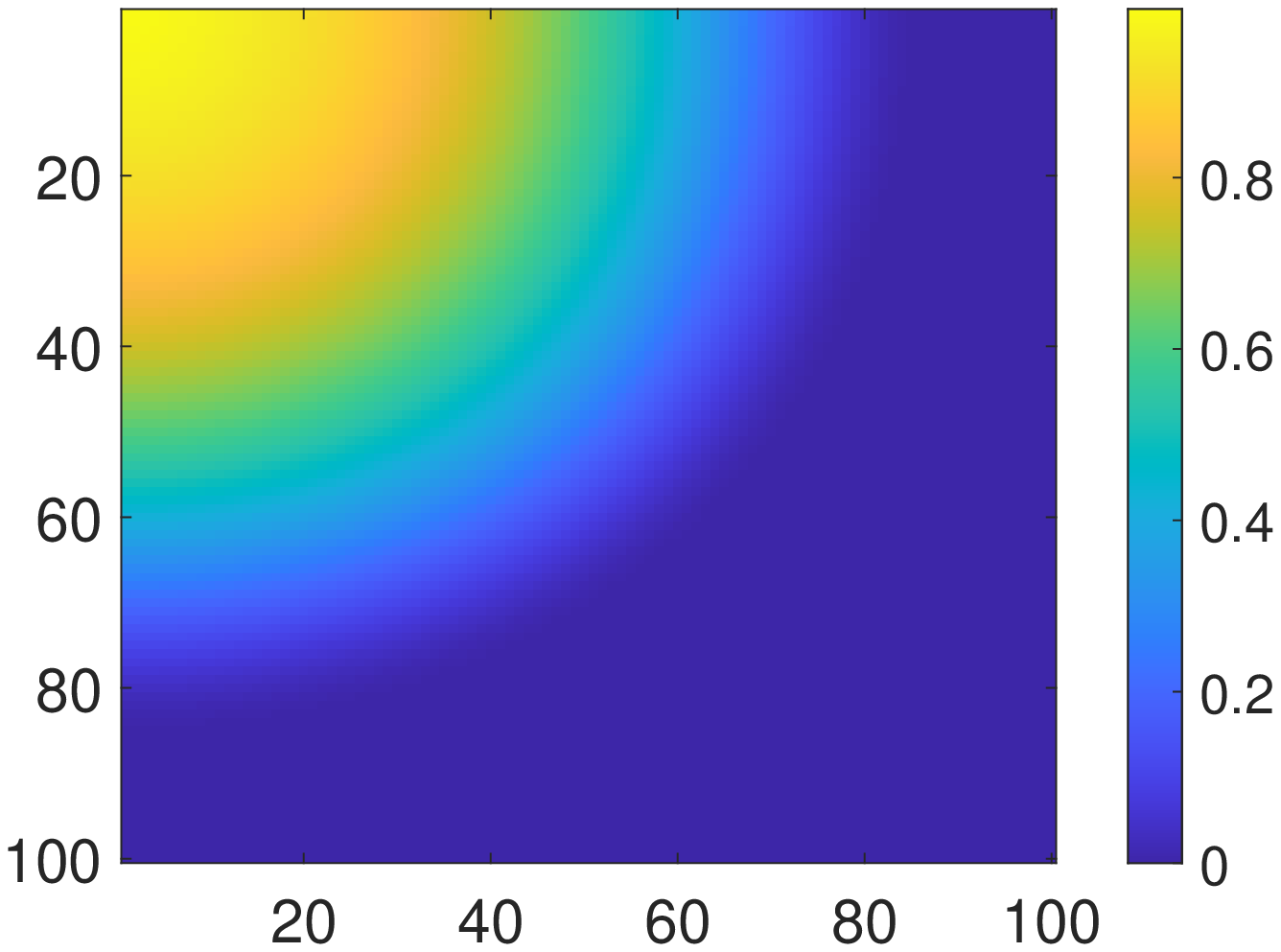}
         
         \includegraphics[width=3cm,height=2cm]{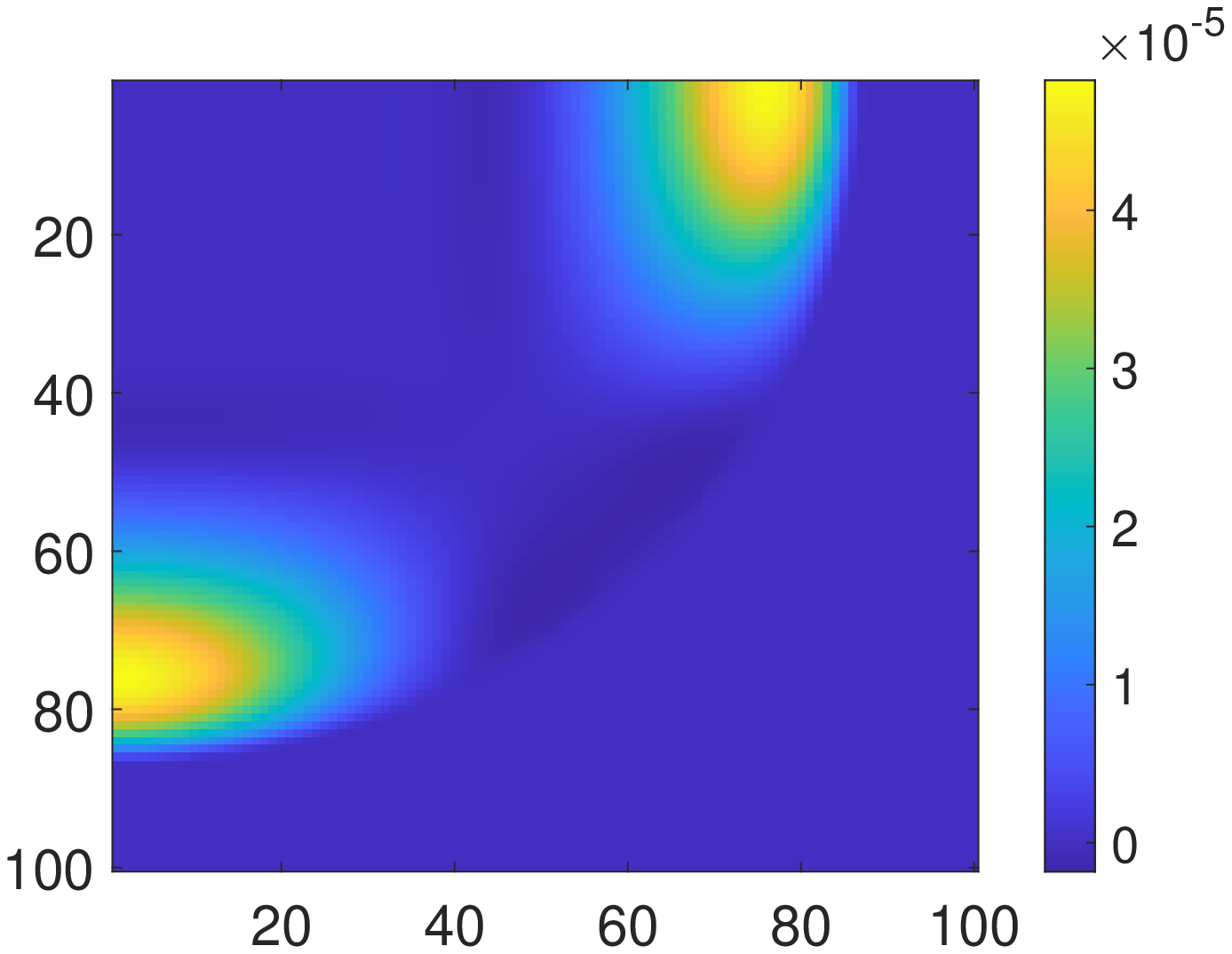}
\end{minipage}
\begin{minipage}[t]{0.2\textwidth}
         \centering
         \includegraphics[width=3cm,height=2cm]{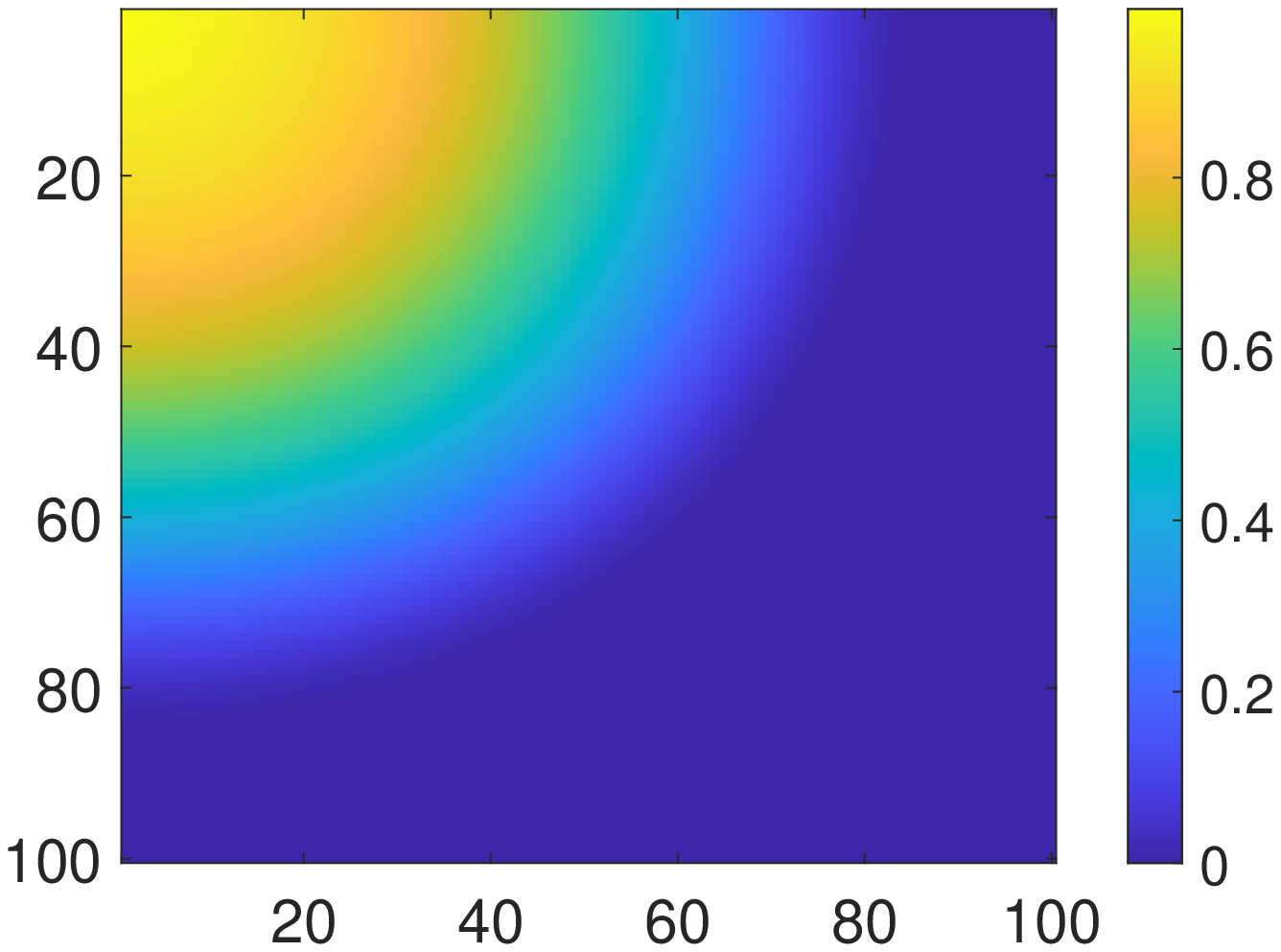}
         
         \includegraphics[width=3cm,height=2cm]{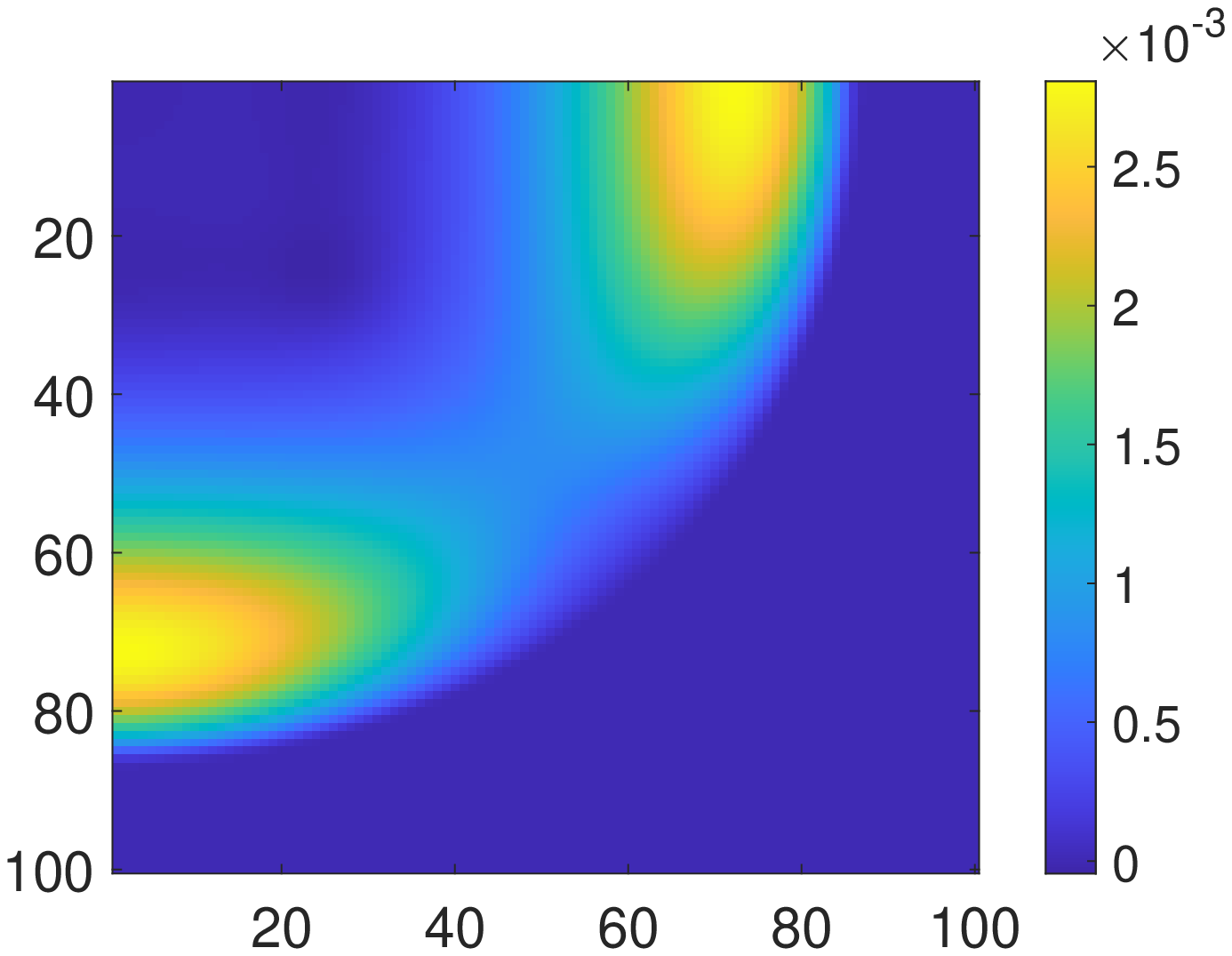}
\end{minipage}
\caption{Snapshots  of $S$ and comparison of the solutions with different data at $T=100$. Top: snapshot  of $S$ . Bottom: pointwise error of the solution. Left: two-phase  solution. Middle-Left:
full data. Middle-Right: $\frac{1}{4}$ data.
 Right: $\frac{1}{16}$ data}
 \label{Sub4}
\end{figure}
\clearpage{}

\begin{figure}[!h]
\centering

     \begin{subfigure}[b]{0.4\textwidth}
         \centering
         \includegraphics[width=3cm,height=2cm]{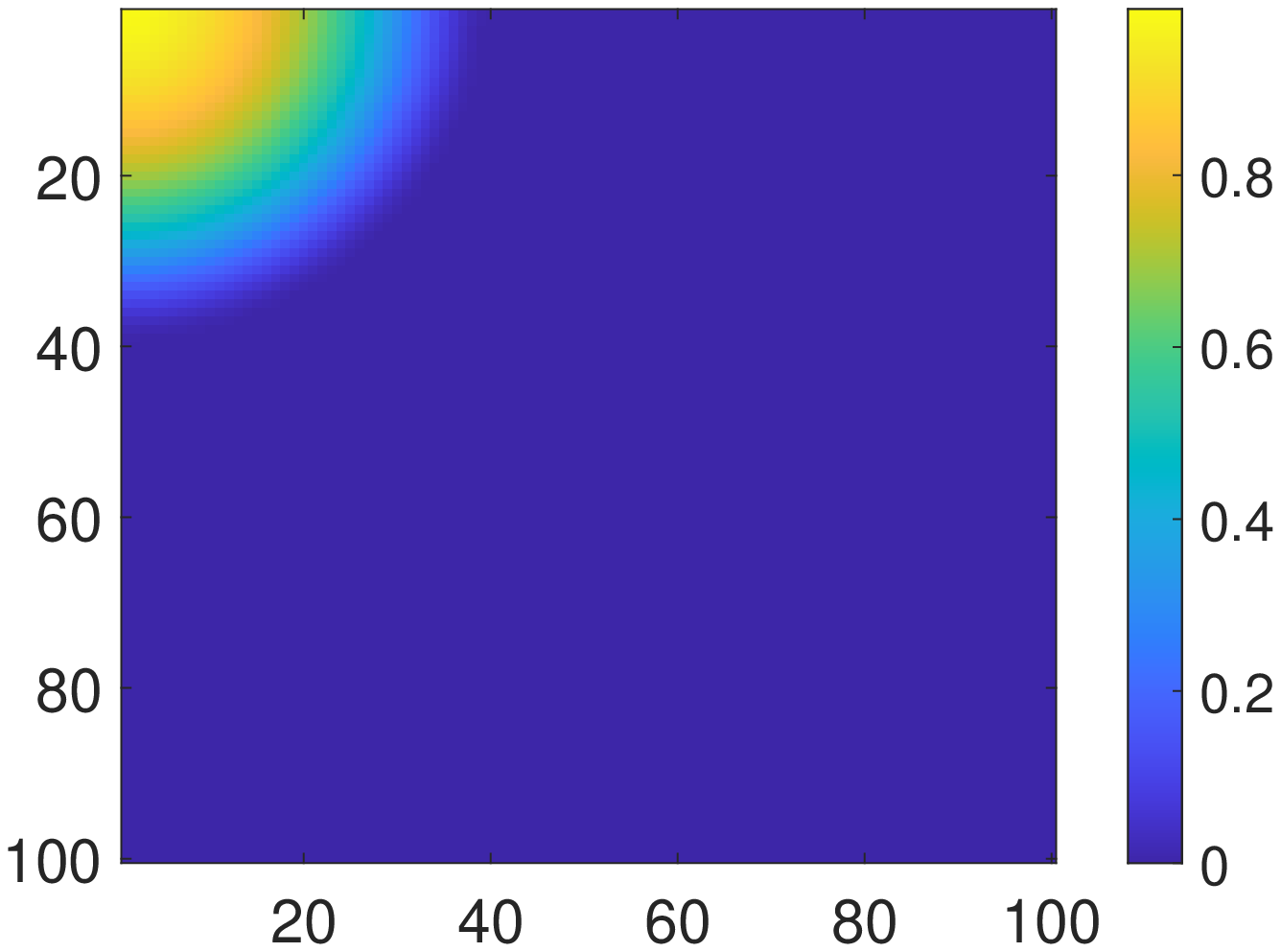}
         
         \includegraphics[width=3cm,height=2cm]{figure/data_sol_1_k2}
         \includegraphics[width=3cm,height=2cm]{figure/data_different_1_k2}
         
         \includegraphics[width=3cm,height=2cm]{figure/twophase_sol_1_k2}
         \includegraphics[width=3cm,height=2cm]{figure/twophase_different_1_k2}
         \caption{$T=0.1$}
     \end{subfigure}
     \begin{subfigure}[b]{0.4\textwidth}
         \centering
         \includegraphics[width=3cm,height=2cm]{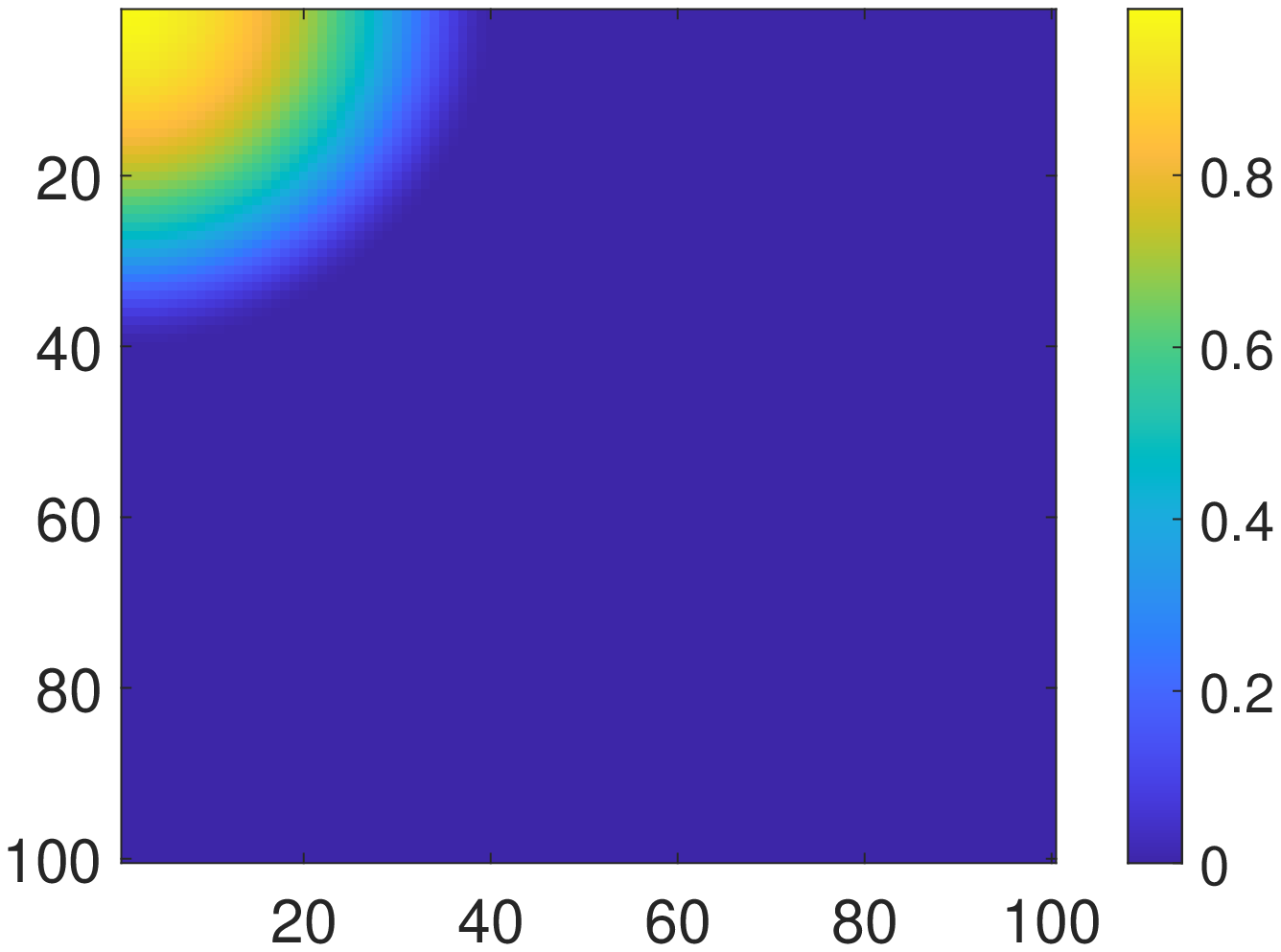}
         
         \includegraphics[width=3cm,height=2cm]{figure/data_sol_1_k20}
         \includegraphics[width=3cm,height=2cm]{figure/data_different_1_k20}
         
         \includegraphics[width=3cm,height=2cm]{figure/twophase_sol_1_k20}
         \includegraphics[width=3cm,height=2cm]{figure/twophase_different_1_k20}
         \caption{$T=1$}
     \end{subfigure}    
     \begin{subfigure}[b]{0.4\textwidth}
         \centering
         \includegraphics[width=3cm,height=2cm]{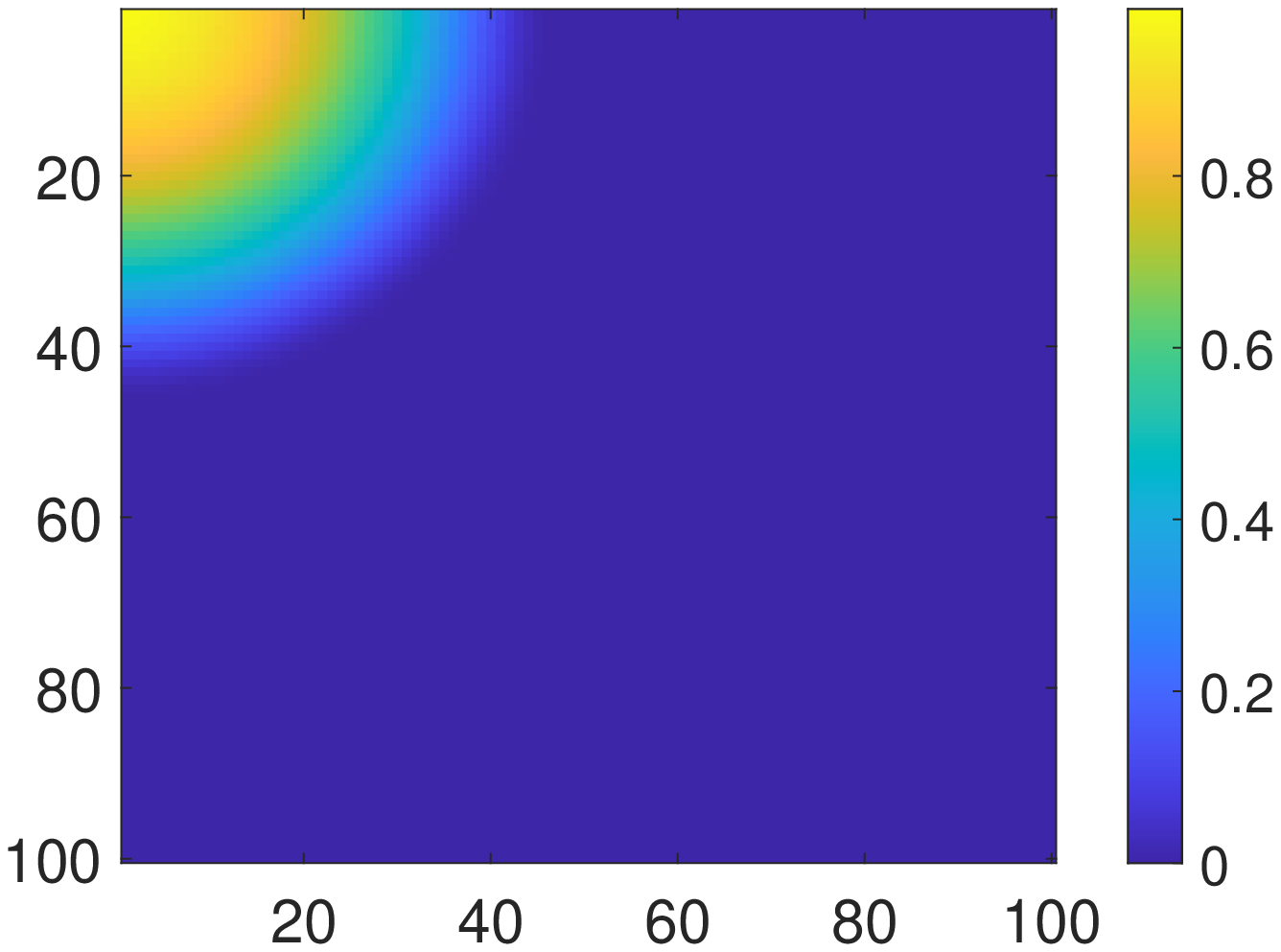}
         
         \includegraphics[width=3cm,height=2cm]{figure/data_sol_1_k200}
         \includegraphics[width=3cm,height=2cm]{figure/data_different_1_k200}
         
         \includegraphics[width=3cm,height=2cm]{figure/twophase_sol_1_k200}
         \includegraphics[width=3cm,height=2cm]{figure/twophase_different_1_k200}
         \caption{$T=10$}
     \end{subfigure}
     \begin{subfigure}[b]{0.4\textwidth}
         \centering
         \includegraphics[width=3cm,height=2cm]{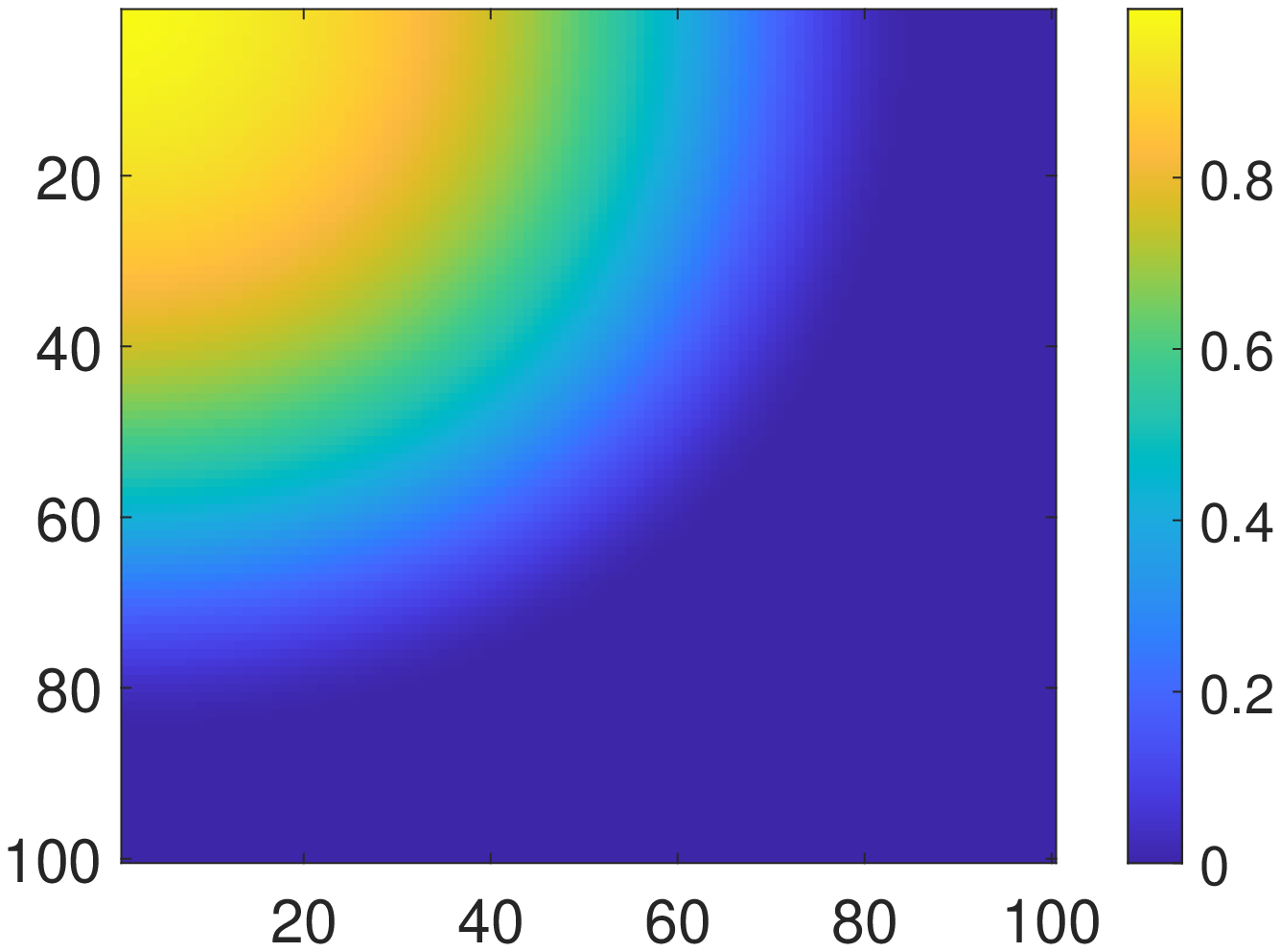}
         
         \includegraphics[width=3cm,height=2cm]{figure/data_sol_1_k2000}
         \includegraphics[width=3cm,height=2cm]{figure/data_different_1_k2000}
         
         \includegraphics[width=3cm,height=2cm]{figure/twophase_sol_1_k2000}
         \includegraphics[width=3cm,height=2cm]{figure/twophase_different_1_k2000}
         \caption{$T=100$}
     \end{subfigure}

\caption{Snapshots  of the solutions $S$ . Top: Reference solution. Middle-Left:
DA (approximate) solution. Middle-Right: difference between the approximate and reference solution. Bottom-Left: two-phase  solution starting from zero initial value. Bottom-Right:
difference between the two-phase  solutions with different initial values.}
\label{fig:error_case1-1-3}
\end{figure}

\subsection{Computational study \RN{2}}\label{Comp-Ex2}
 We present our second numerical test, with its source terms coinciding with that of the first example. On the other hand, we have a medium that have a totally different permeability profile as shown in figure \ref{fig:case_2_kappa}. 

\begin{figure}[!h]
\centering\includegraphics[width=5.5cm,height=3.5cm]{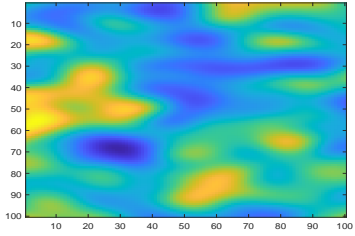}
\caption{Absolute permeability for case 2.}
\label{fig:case_2_kappa} 
\end{figure}
The same initialization process  as in the  first example is considered here, which provides us with the initial condition of the exact solution as $S(0) = \tilde{S}(25)$.
With this new medium profile, we compute an approximate solution without any prior knowledge of our initial value, and apply our data assimilation algorithm with $\mu = 200$ as in Example 1
In this experiment, we carried out two tests.  The first one is with data taken on full domain $\Omega$, and the other test is carried out with data only taken on one of the following sub-domains at the left half of the domain:
$$\Omega_3 =  [0,50]\times[0,100].$$
The convergence history of this test is given in Figure \ref{fig:error_case2},
and the snapshots of the solutions $S$ are now shown in Figures \ref{fig:error_case1-1-4}.  We  again observe an exponential decay of the residual error in Figure \ref{fig:error_case2}, with a clear linear fit in the log scale,  This has numerically validated the theoretical result that we have proved in Theorem \ref{main_theorem}, and the effectiveness of our proposed data assimilation algorithm.  We then  continue  by testing the effect of the size of the sub-domain. In the case of subdomain $\Omega_3$, we see from the
snapshot plots in Figures \ref{Sub1_2}, \ref{Sub2_2}, \ref{Sub3_2}, and  \ref{Sub4_2} that the main spatial features over the full domain  $\Omega$ are nevertheless captured as time evolves.

%
\begin{figure}[!h]
\centering\includegraphics[width=5.5cm,height=3.5cm]{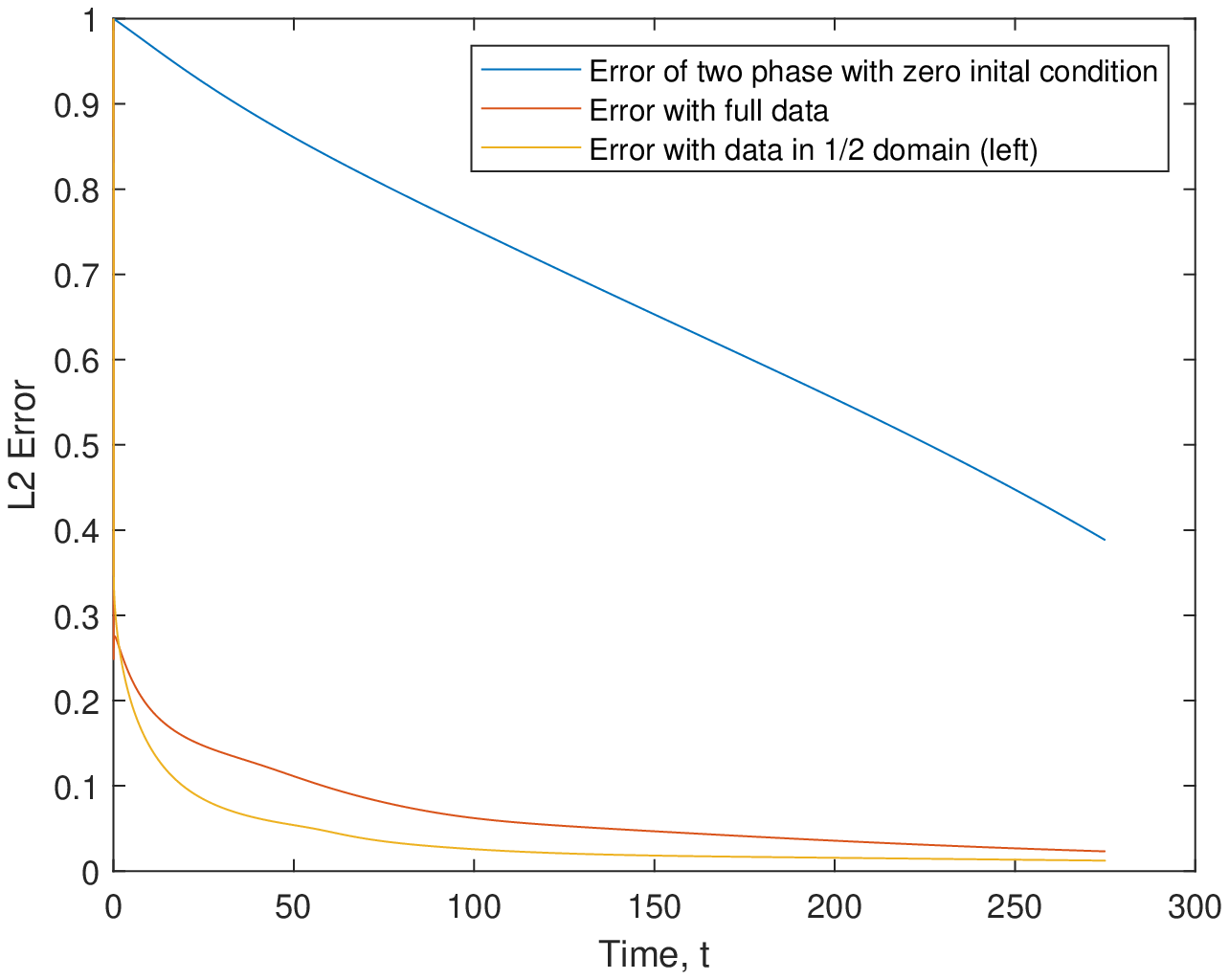} \includegraphics[width=5.5cm,height=3.5cm]{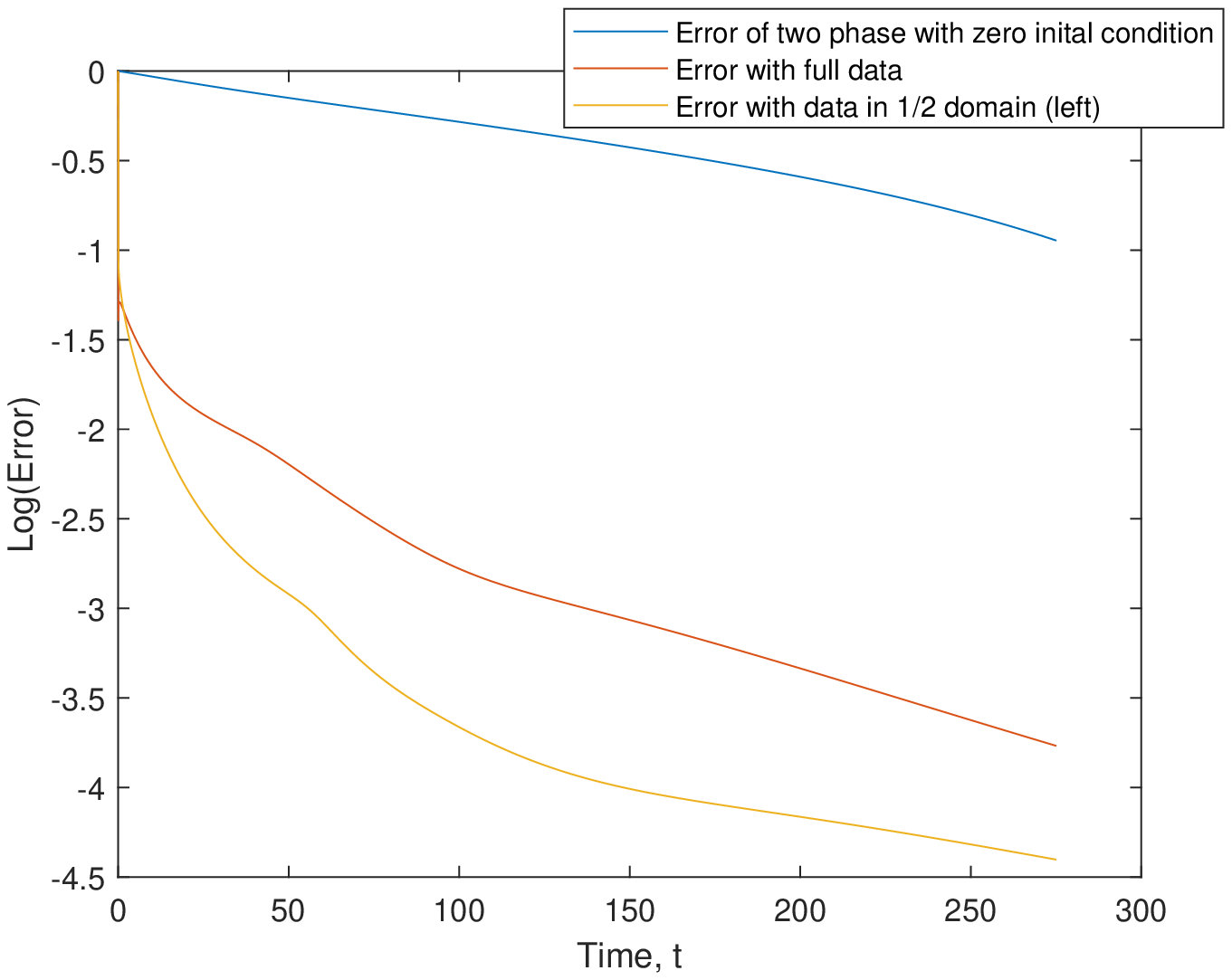}

\caption{Error comparison $\|S(t) - \tilde{S}(t)\|_{L^2}$. Left: $L_2$ error. Right: $L_2$ error in log-scale.}
\label{fig:error_case2}
\end{figure}

\begin{figure}[!h]
\begin{minipage}[t]{0.2\textwidth}
         \centering
         \includegraphics[width=3cm,height=2cm]{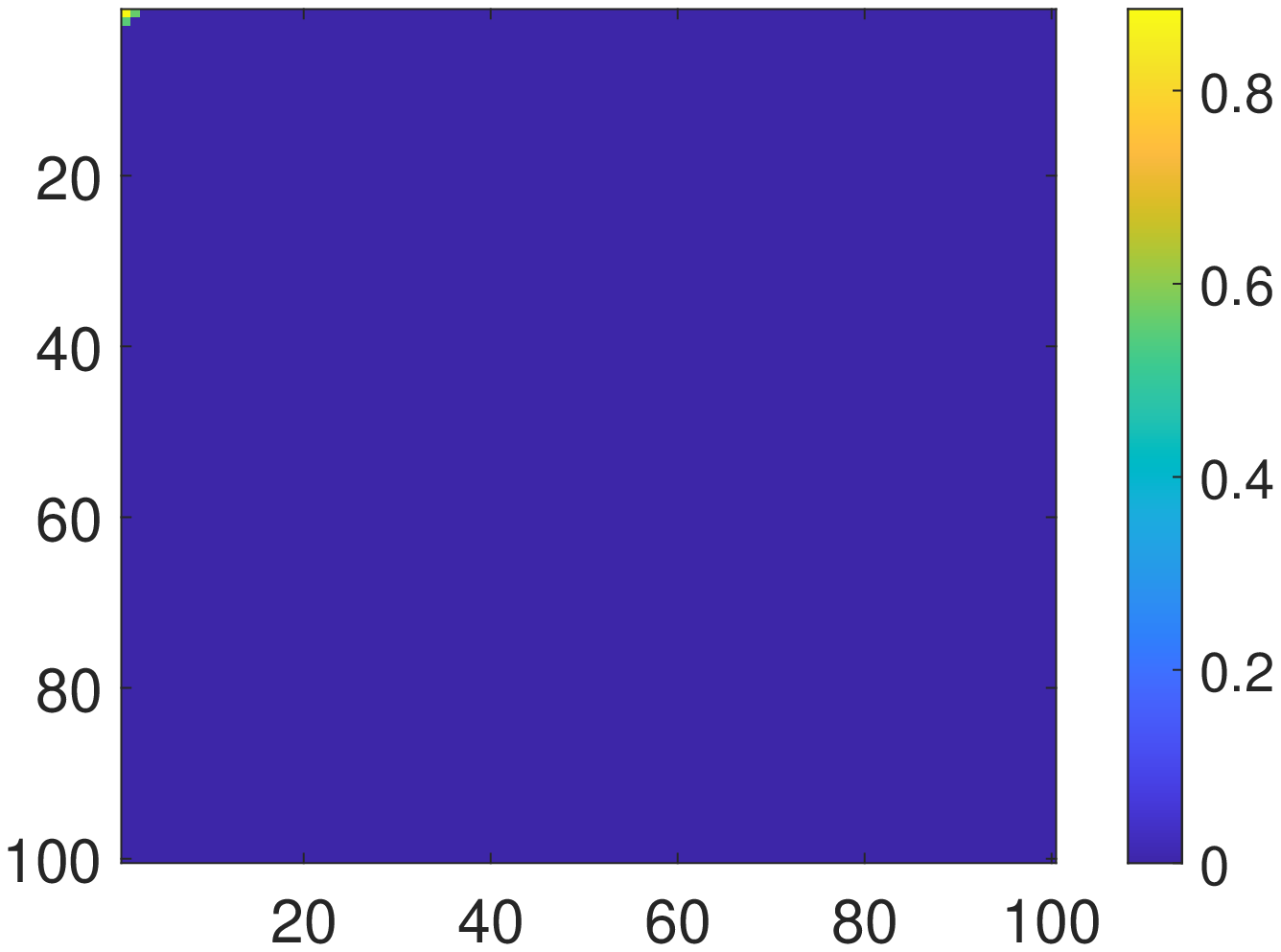}

        \includegraphics[width=3cm,height=2cm]{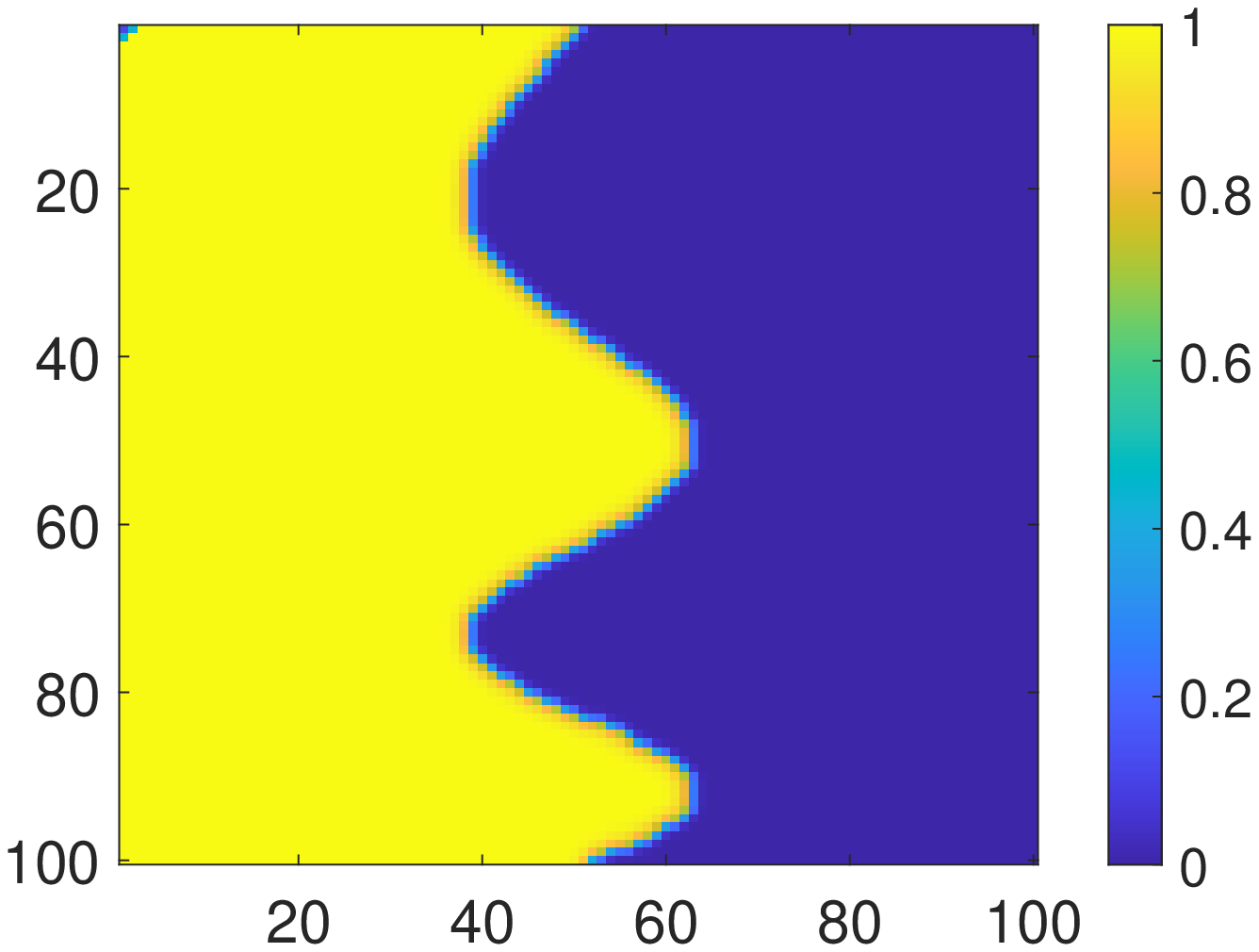}
\end{minipage}
\begin{minipage}[t]{0.2\textwidth}
         \centering
         \includegraphics[width=3cm,height=2cm]{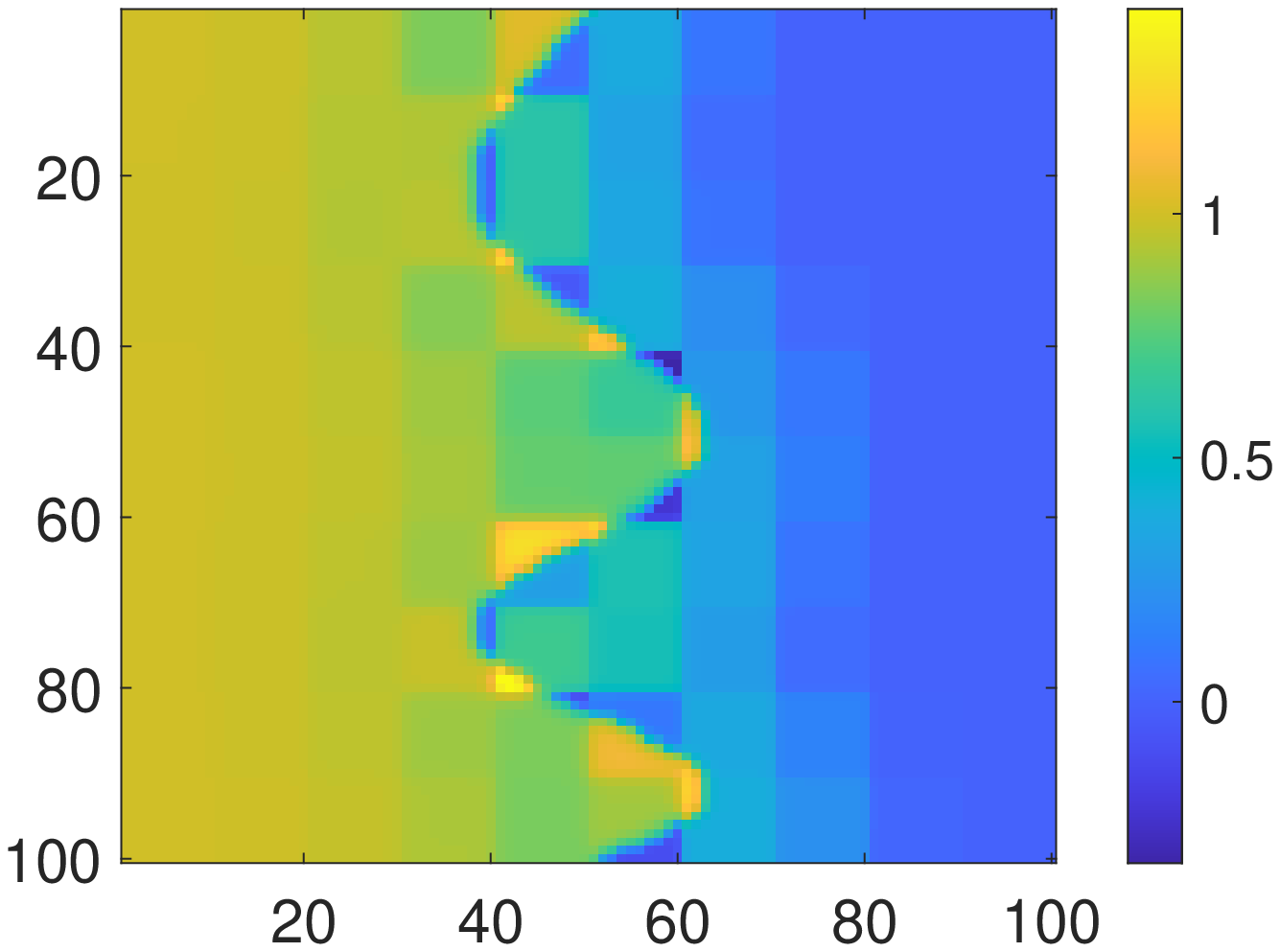}

         \includegraphics[width=3cm,height=2cm]{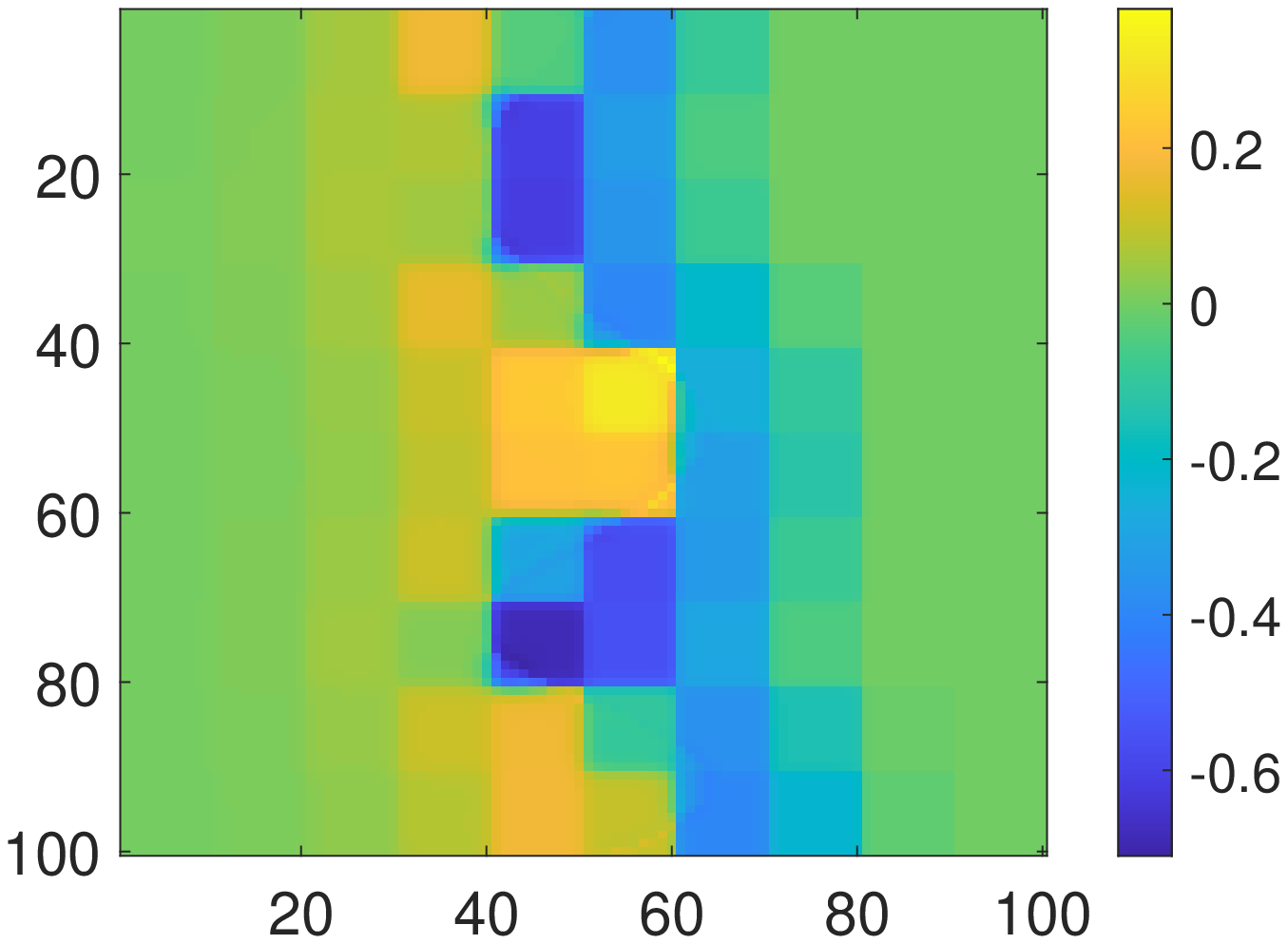}
\end{minipage}
\begin{minipage}[t]{0.2\textwidth}
         \centering
         \includegraphics[width=3cm,height=2cm]{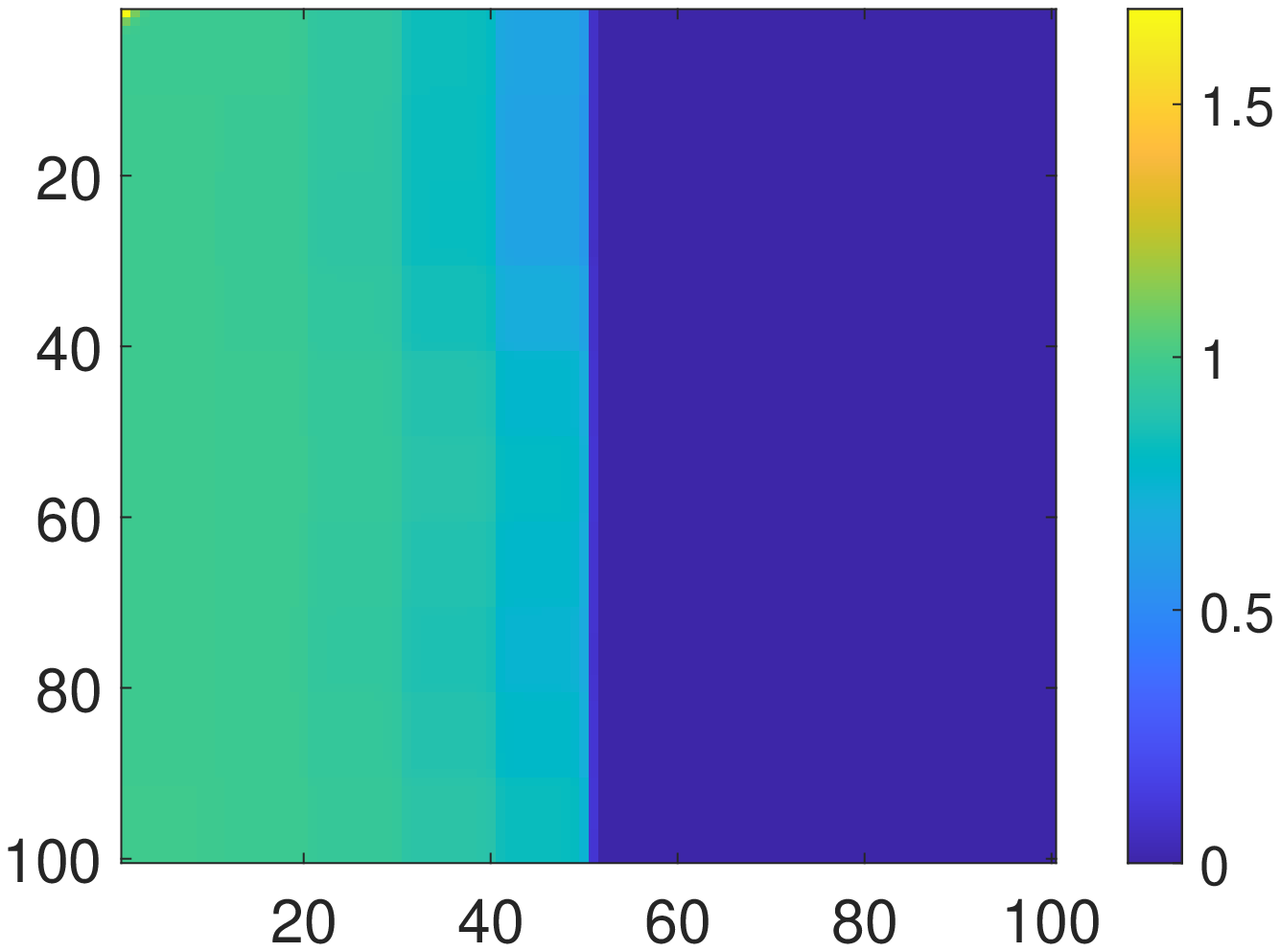}
         
         \includegraphics[width=3cm,height=2cm]{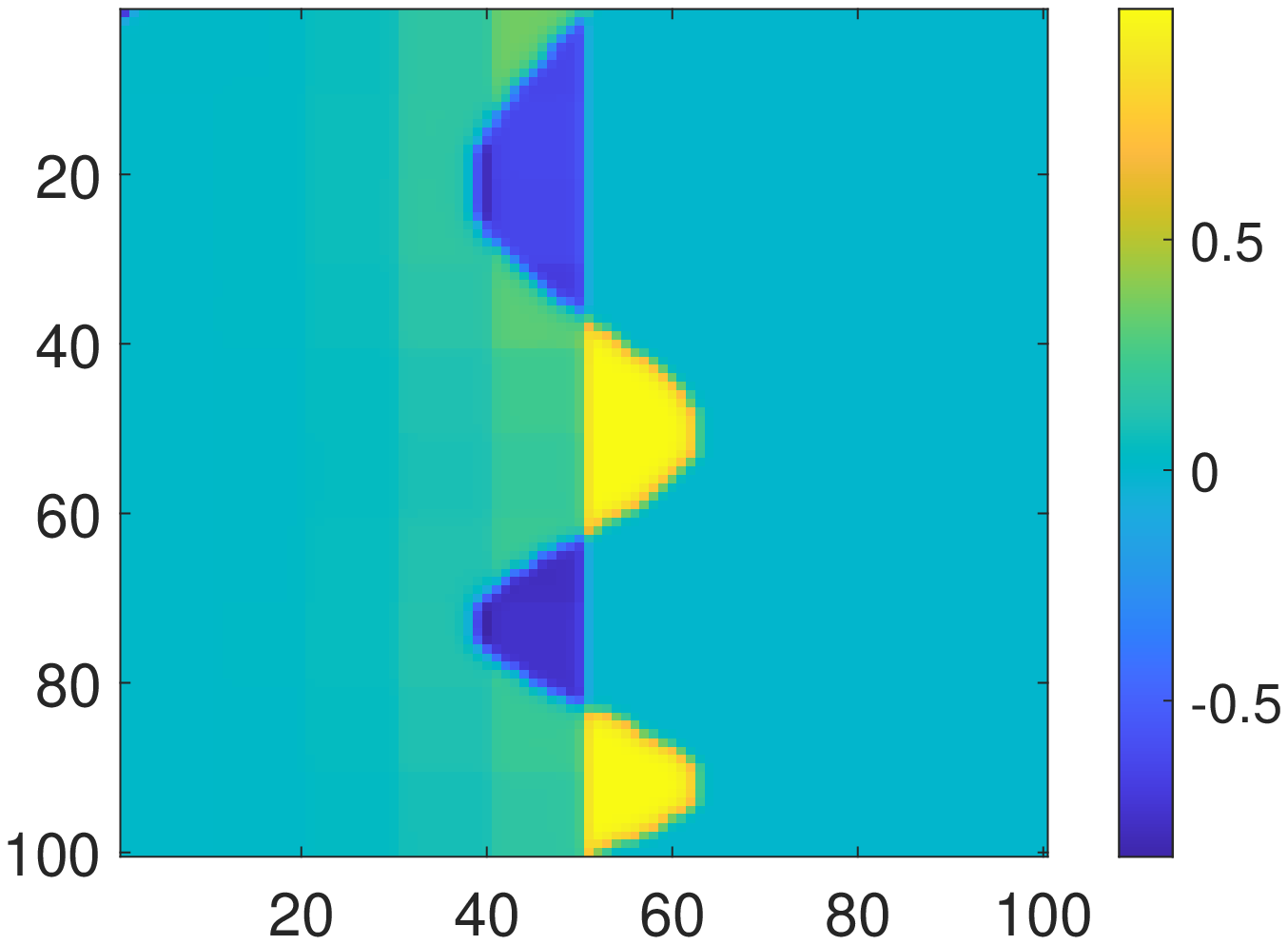}
\end{minipage}
\caption{Snapshots  of $S$ and comparison of the solutions with different data at $T=0.1$. Top: snapshot  of $S$ . Bottom: pointwise error of the solution. Left: two-phase  solution. Middle:
full data. Right: $\frac{1}{2}$ data}
 \label{Sub1_2}
\end{figure}

\begin{figure}[!h]
\begin{minipage}[t]{0.2\textwidth}
         \centering
         \includegraphics[width=3cm,height=2cm]{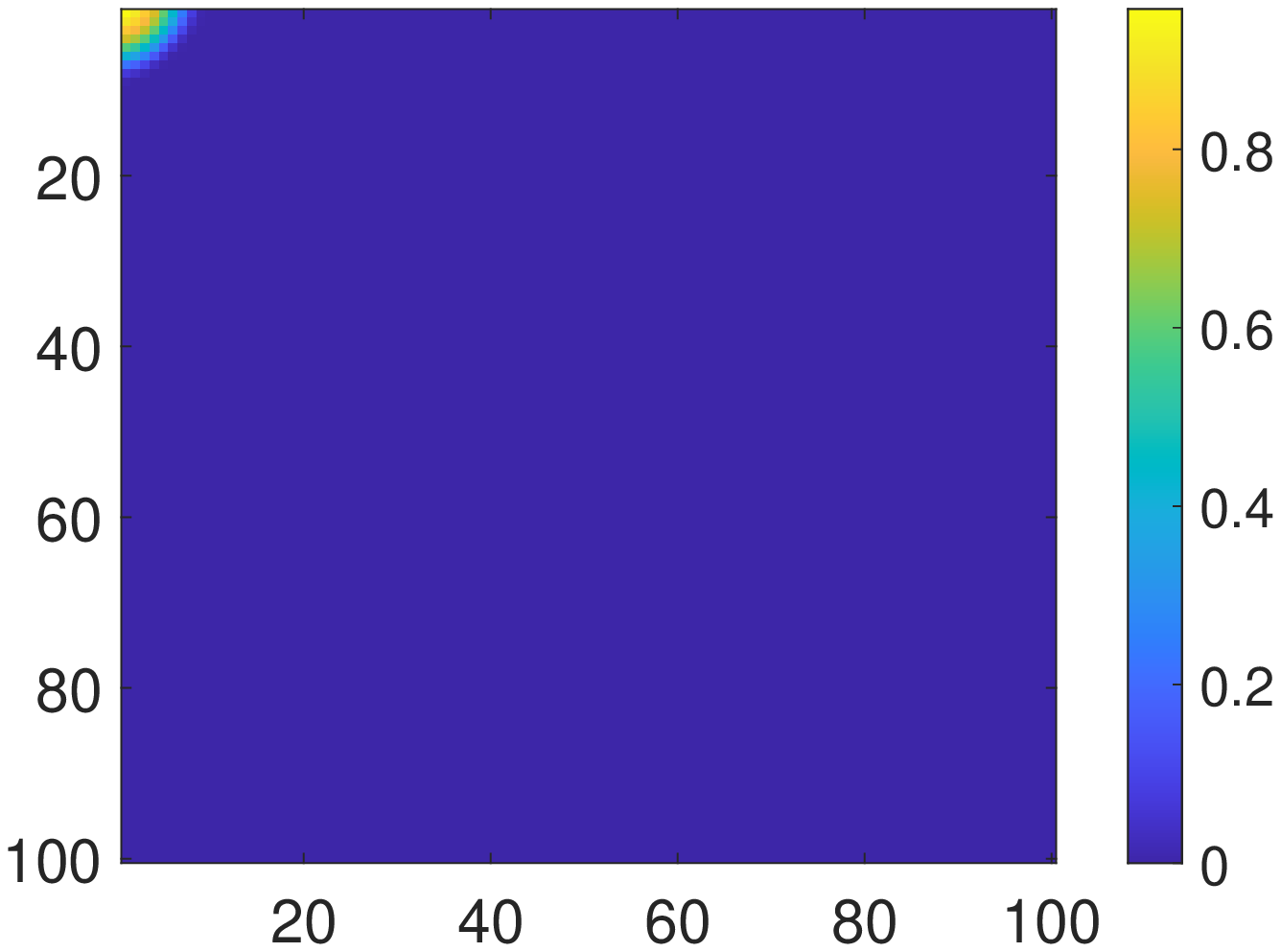}

        \includegraphics[width=3cm,height=2cm]{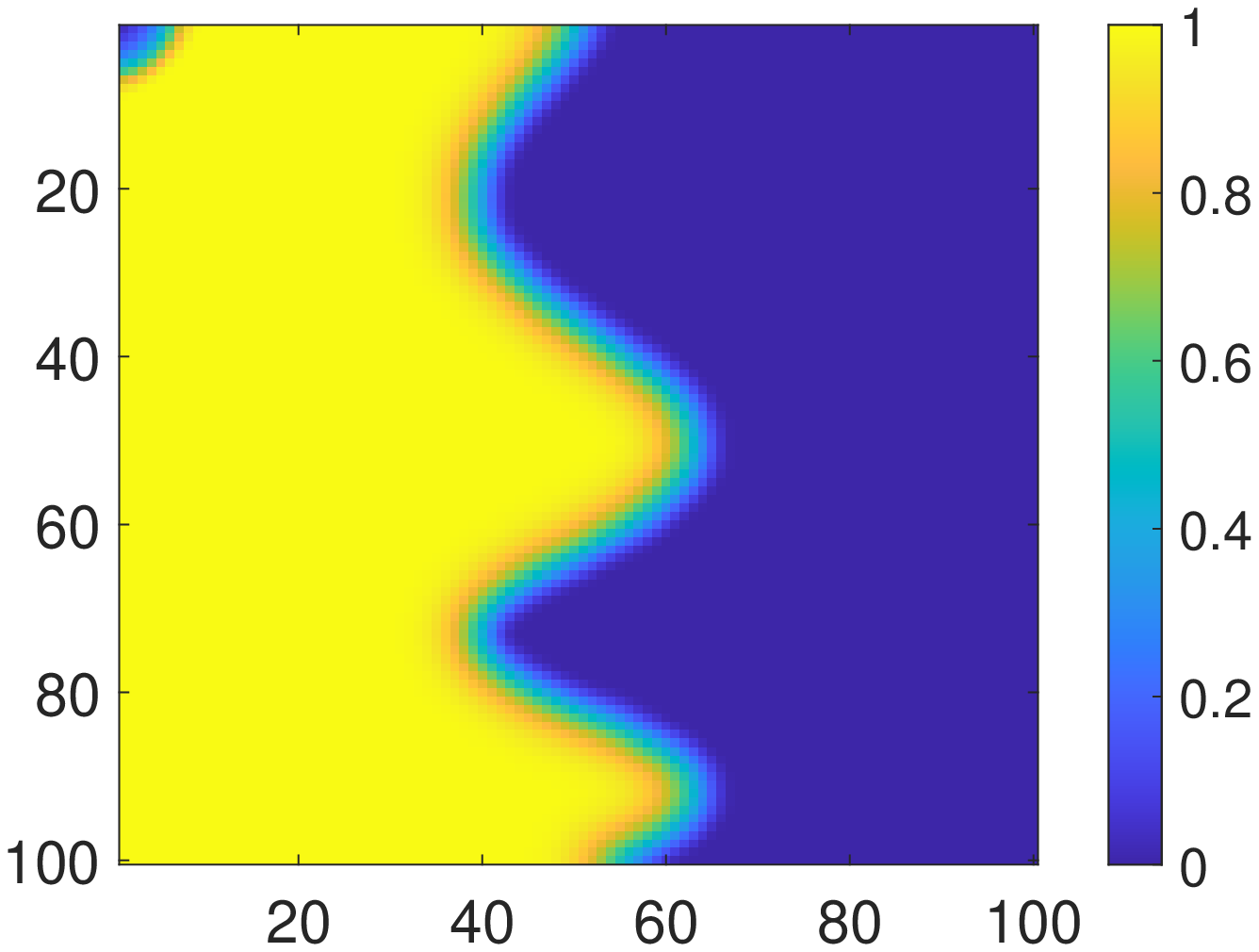}
\end{minipage}
\begin{minipage}[t]{0.2\textwidth}
         \centering
         \includegraphics[width=3cm,height=2cm]{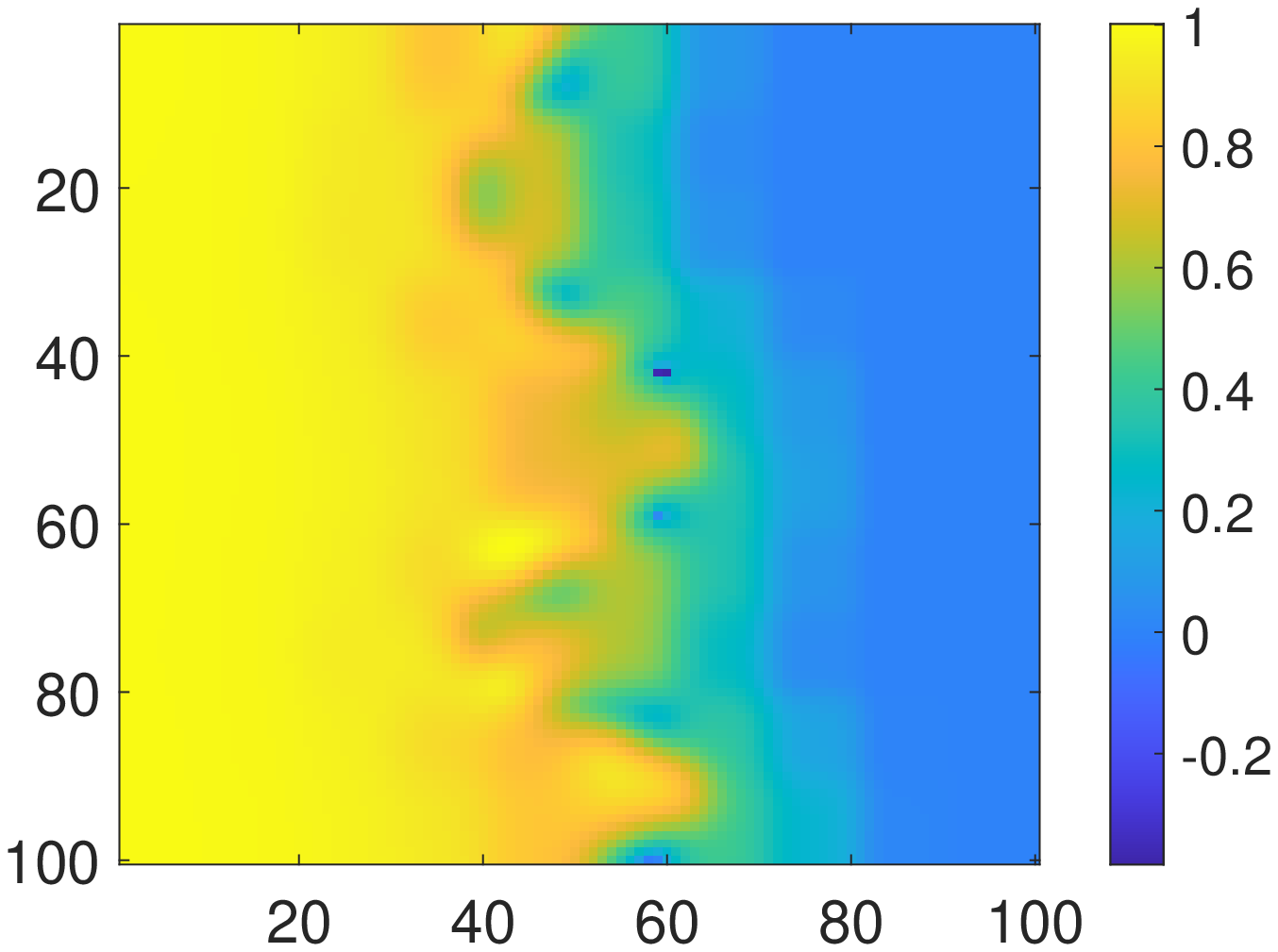}

         \includegraphics[width=3cm,height=2cm]{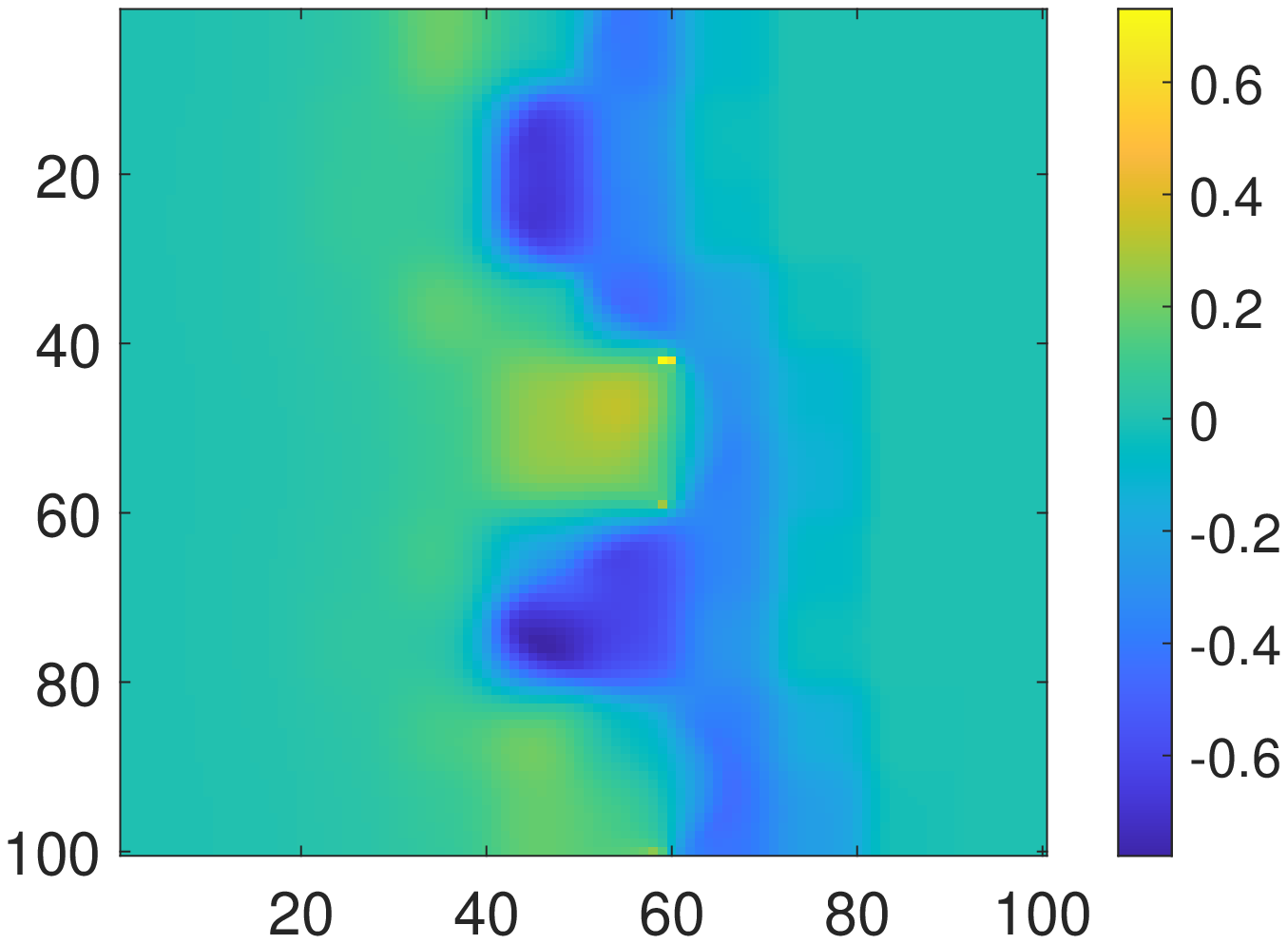}
\end{minipage}
\begin{minipage}[t]{0.2\textwidth}
         \centering
         \includegraphics[width=3cm,height=2cm]{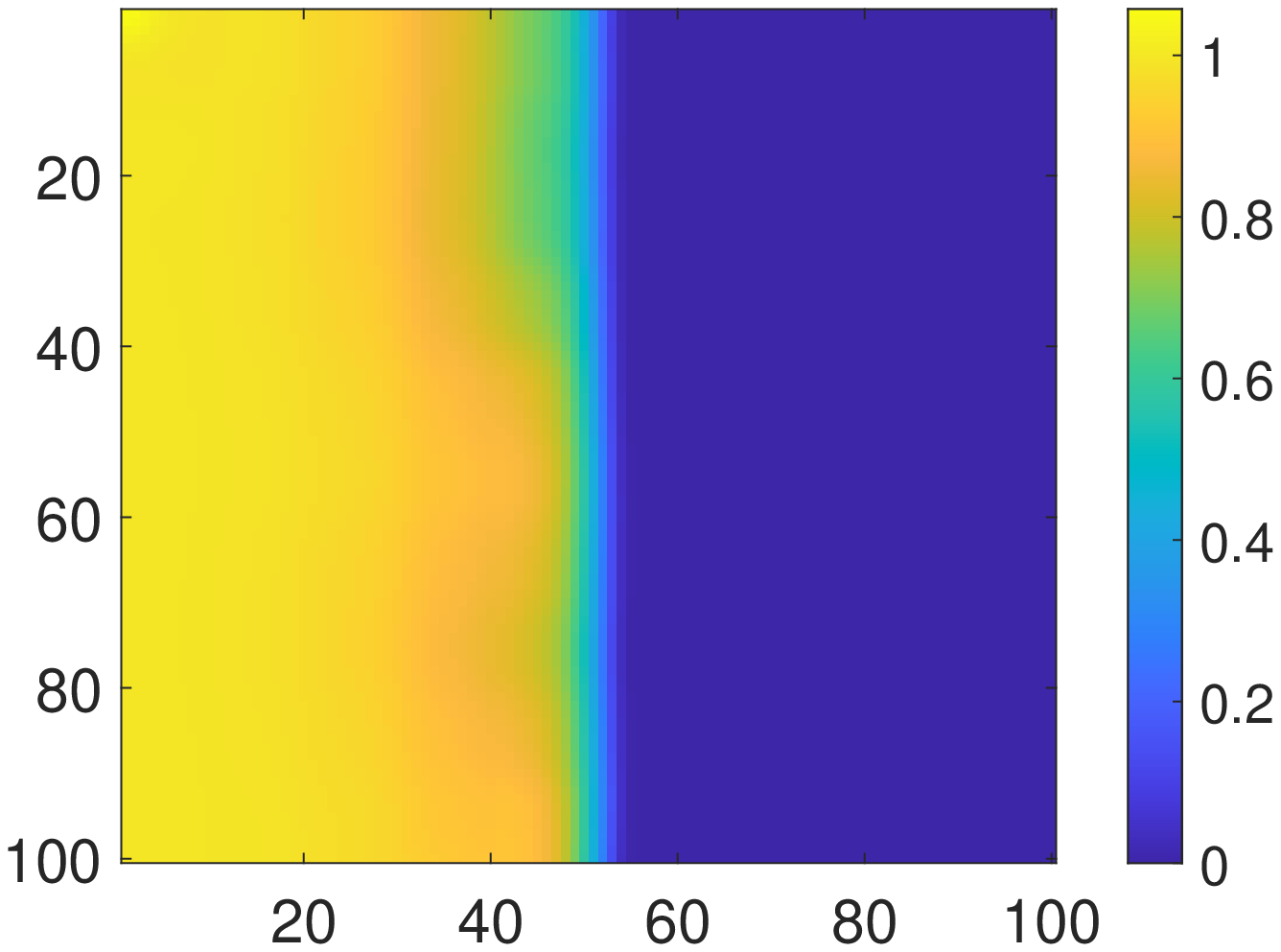}
         
         \includegraphics[width=3cm,height=2cm]{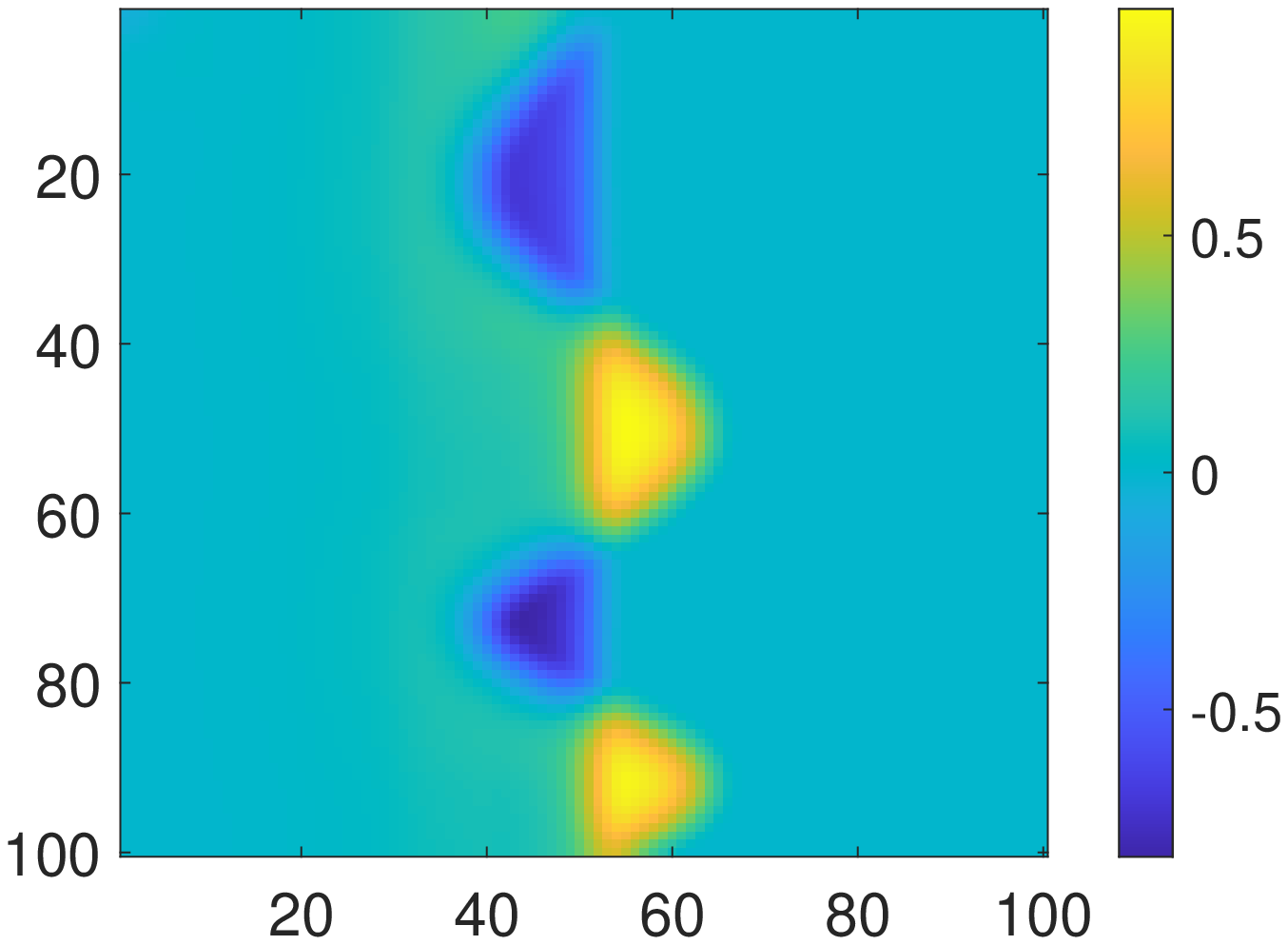}
\end{minipage}
\caption{Snapshots  of $S$ and comparison of the solutions with different data at $T=1$. Top: snapshot  of $S$ . Bottom: pointwise error of the solution. Left: two-phase  solution. Middle:
full data. Right: $\frac{1}{2}$ data}
 \label{Sub2_2}
\end{figure}

\begin{figure}[!h]
\begin{minipage}[t]{0.2\textwidth}
         \centering
         \includegraphics[width=3cm,height=2cm]{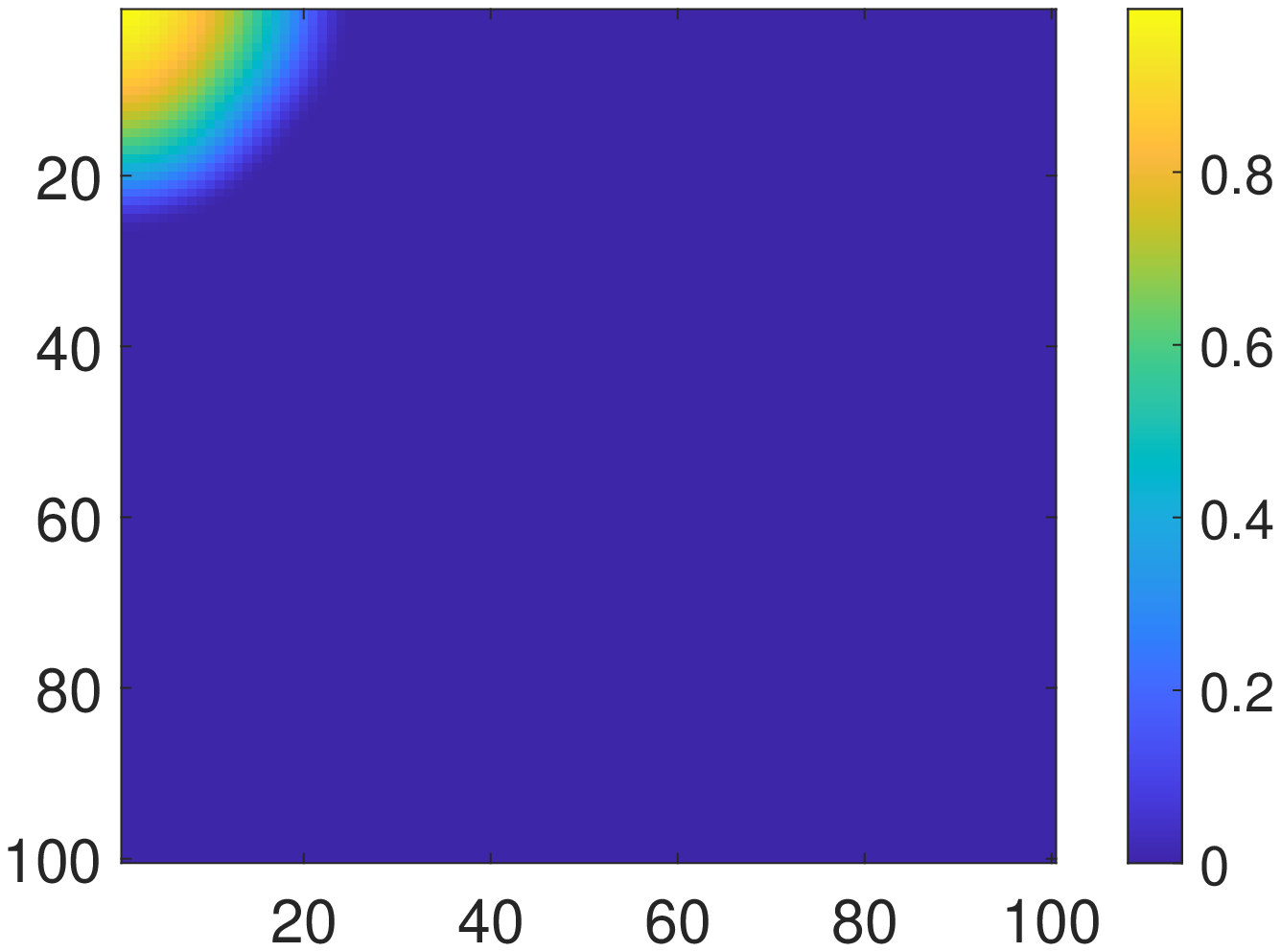}

        \includegraphics[width=3cm,height=2cm]{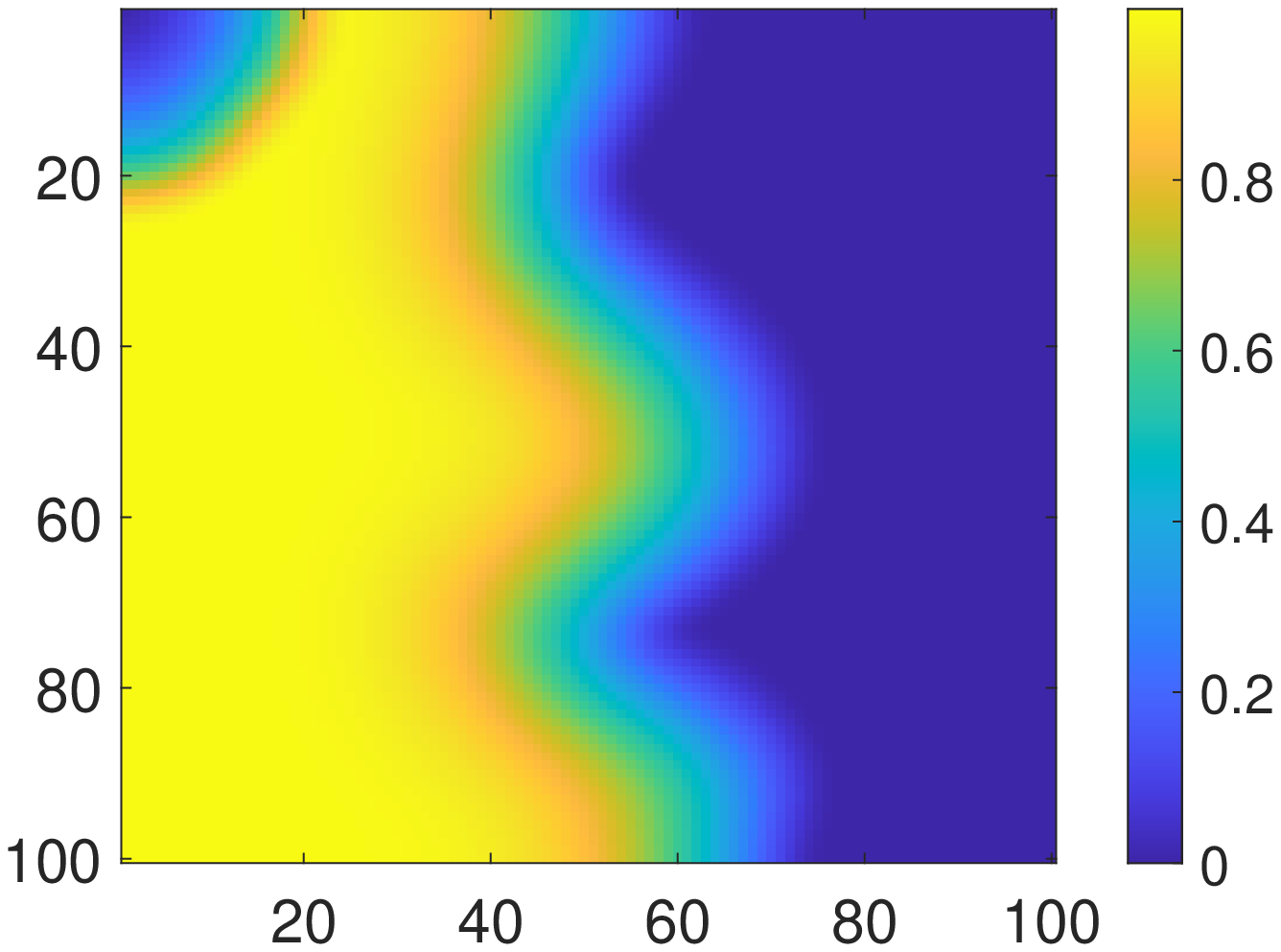}
\end{minipage}
\begin{minipage}[t]{0.2\textwidth}
         \centering
         \includegraphics[width=3cm,height=2cm]{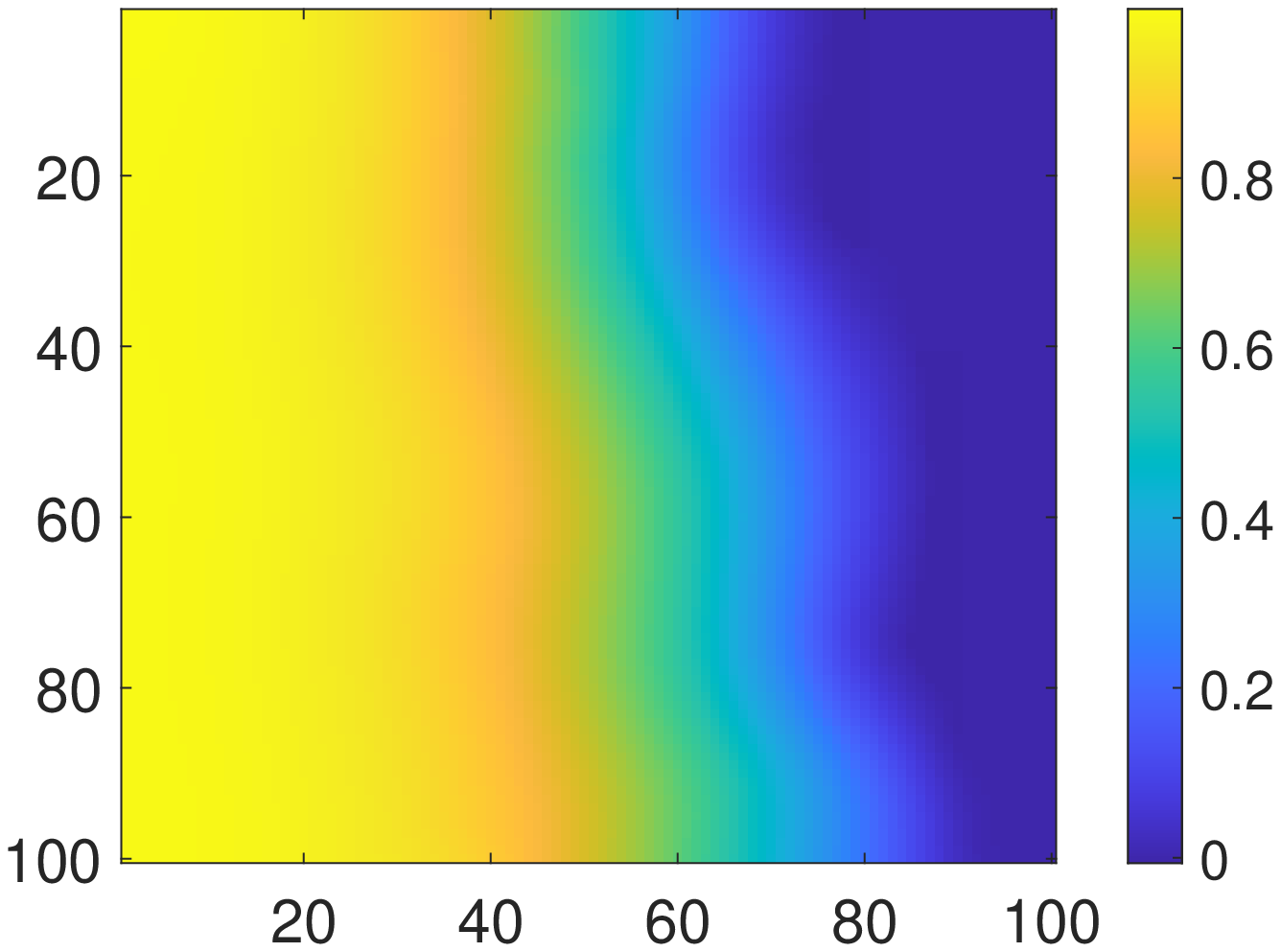}

         \includegraphics[width=3cm,height=2cm]{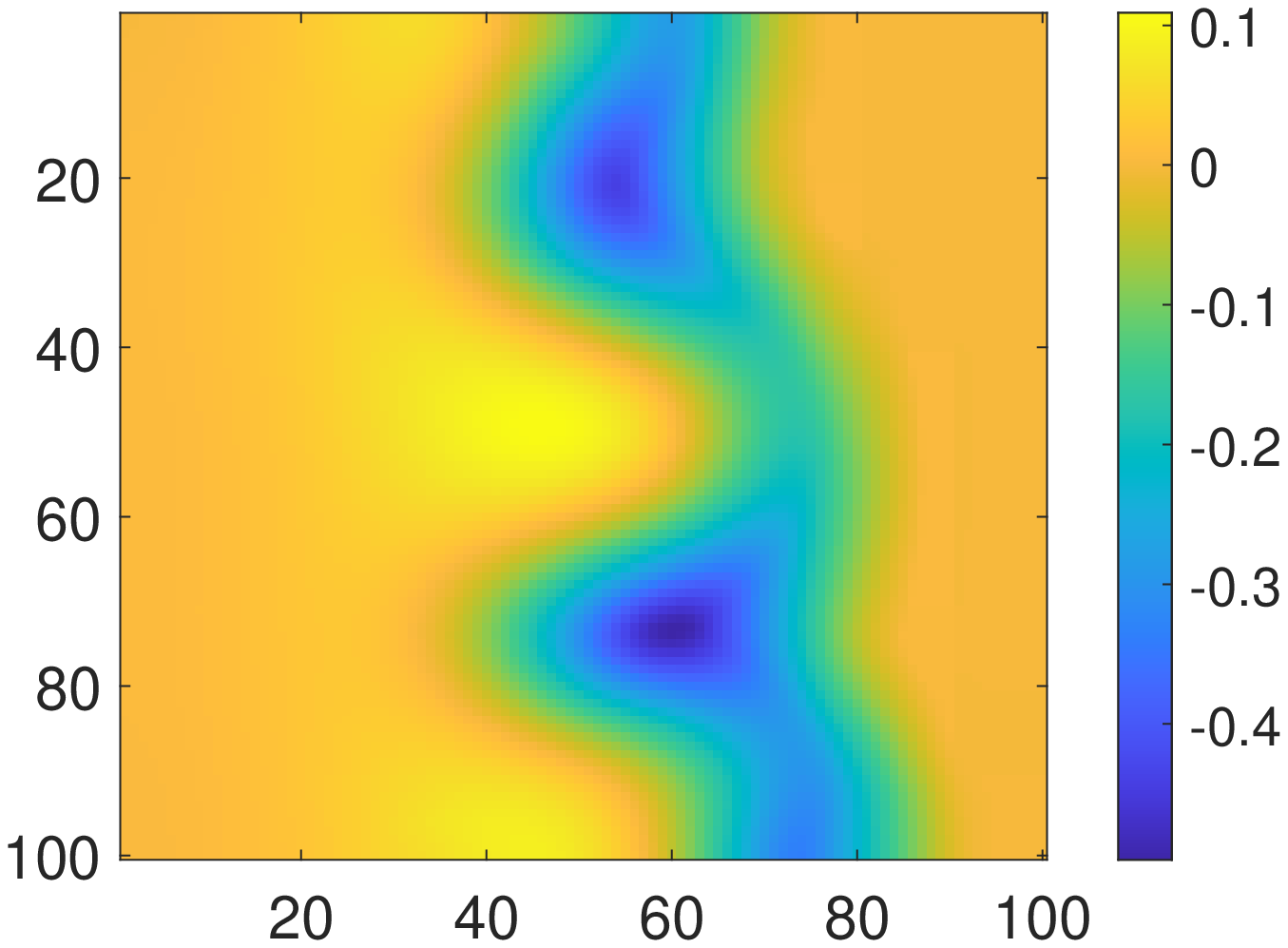}
\end{minipage}
\begin{minipage}[t]{0.2\textwidth}
         \centering
         \includegraphics[width=3cm,height=2cm]{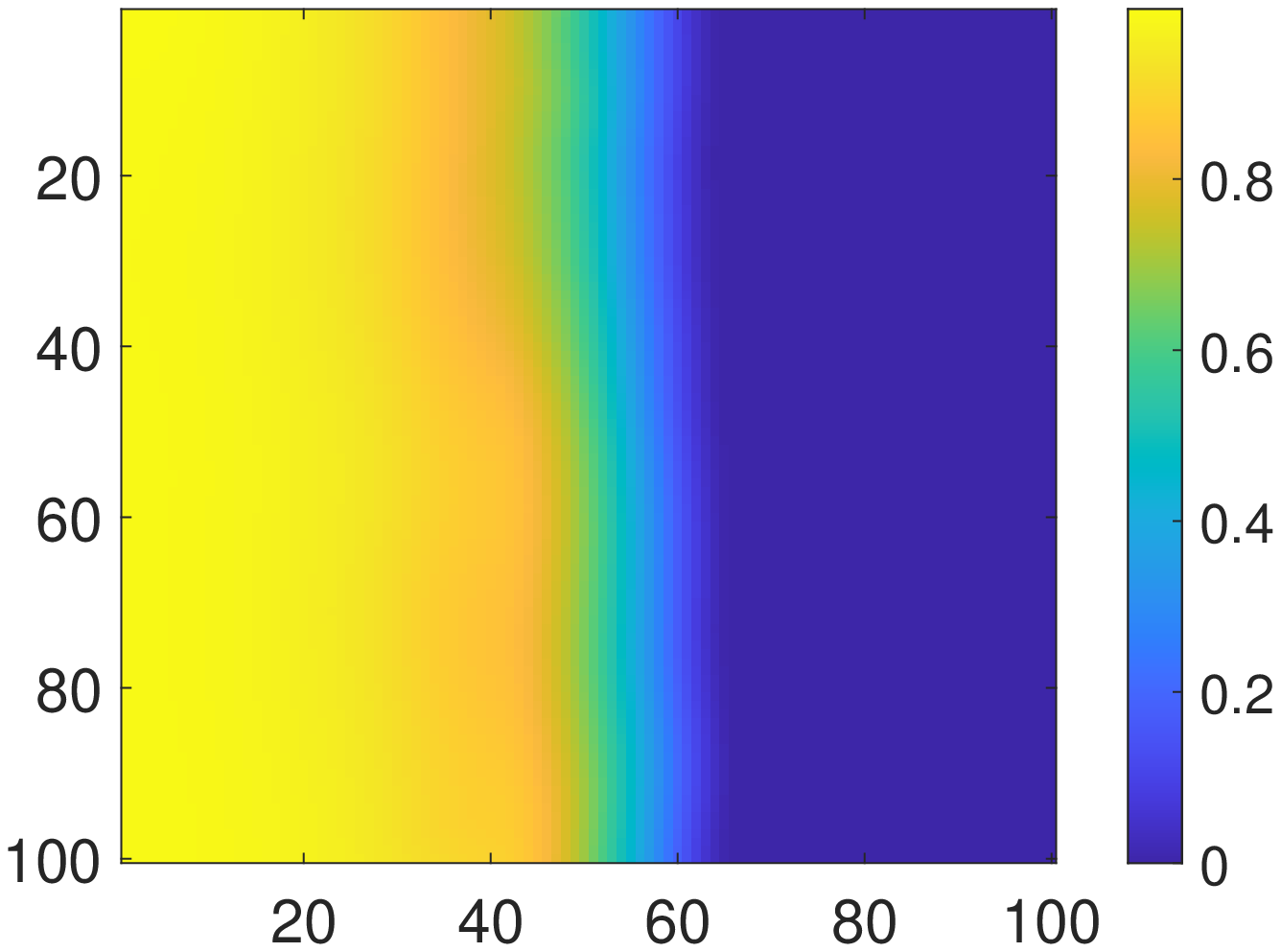}
         
         \includegraphics[width=3cm,height=2cm]{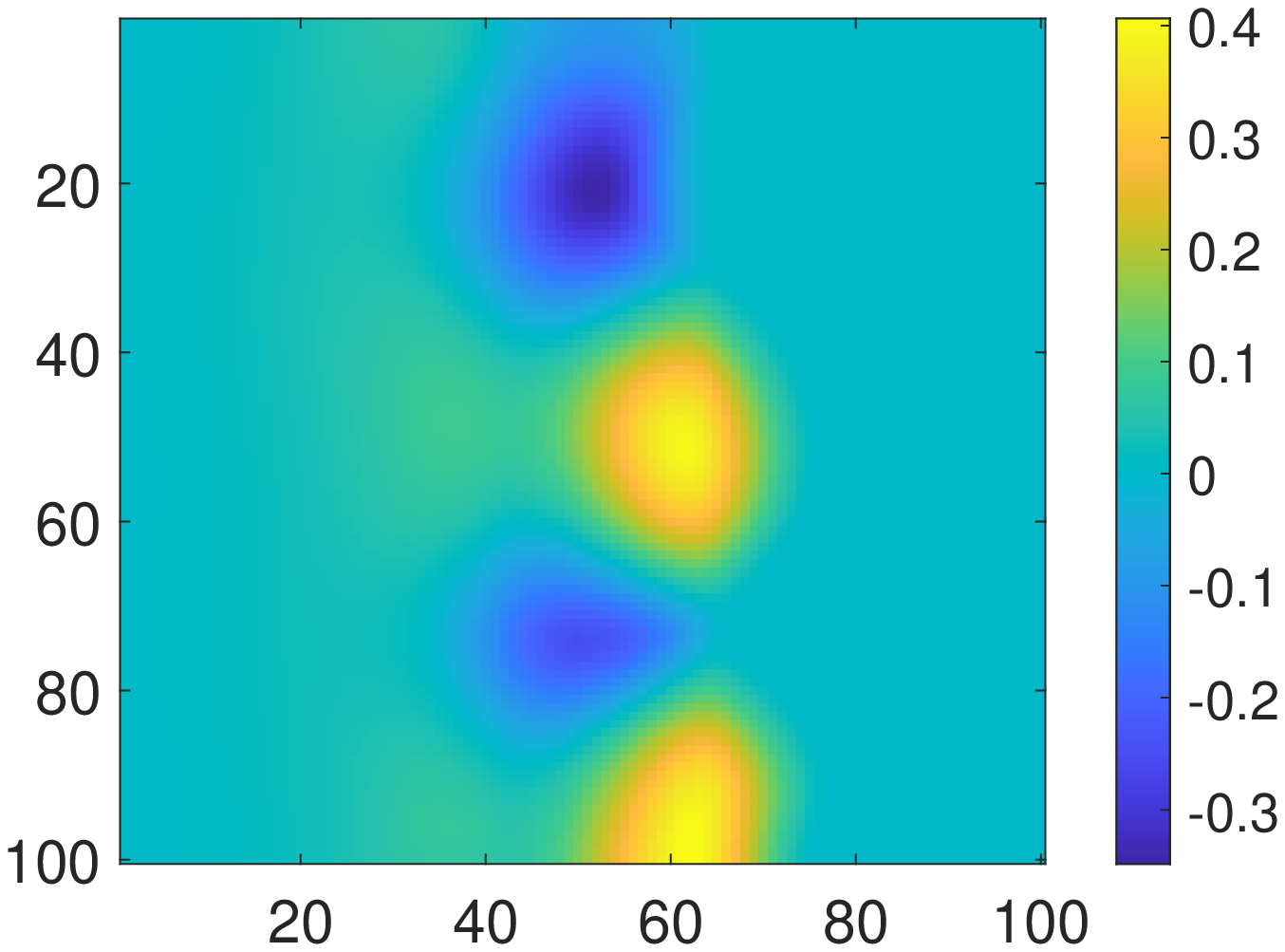}
\end{minipage}
\caption{Snapshots of $S$  and comparison of the solutions with different data at $T=10$. Top: snapshot  of $S$ . Bottom: pointwise error of the solution. Left: two-phase  solution. Middle:
full data. Right: $\frac{1}{2}$ data}
 \label{Sub3_2}
\end{figure}

\begin{figure}[!h]
\begin{minipage}[t]{0.2\textwidth}
         \centering
         \includegraphics[width=3cm,height=2cm]{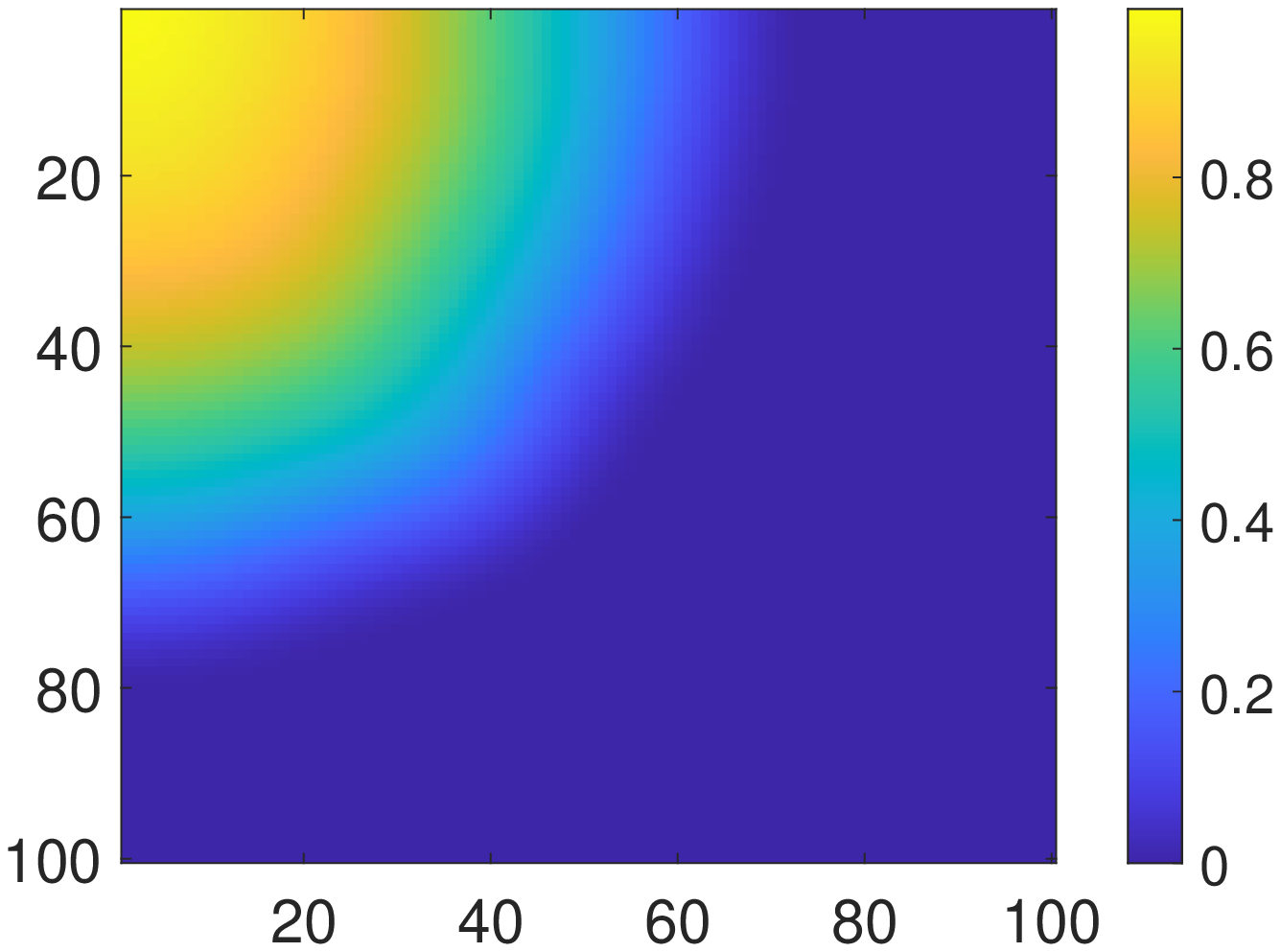}

        \includegraphics[width=3cm,height=2cm]{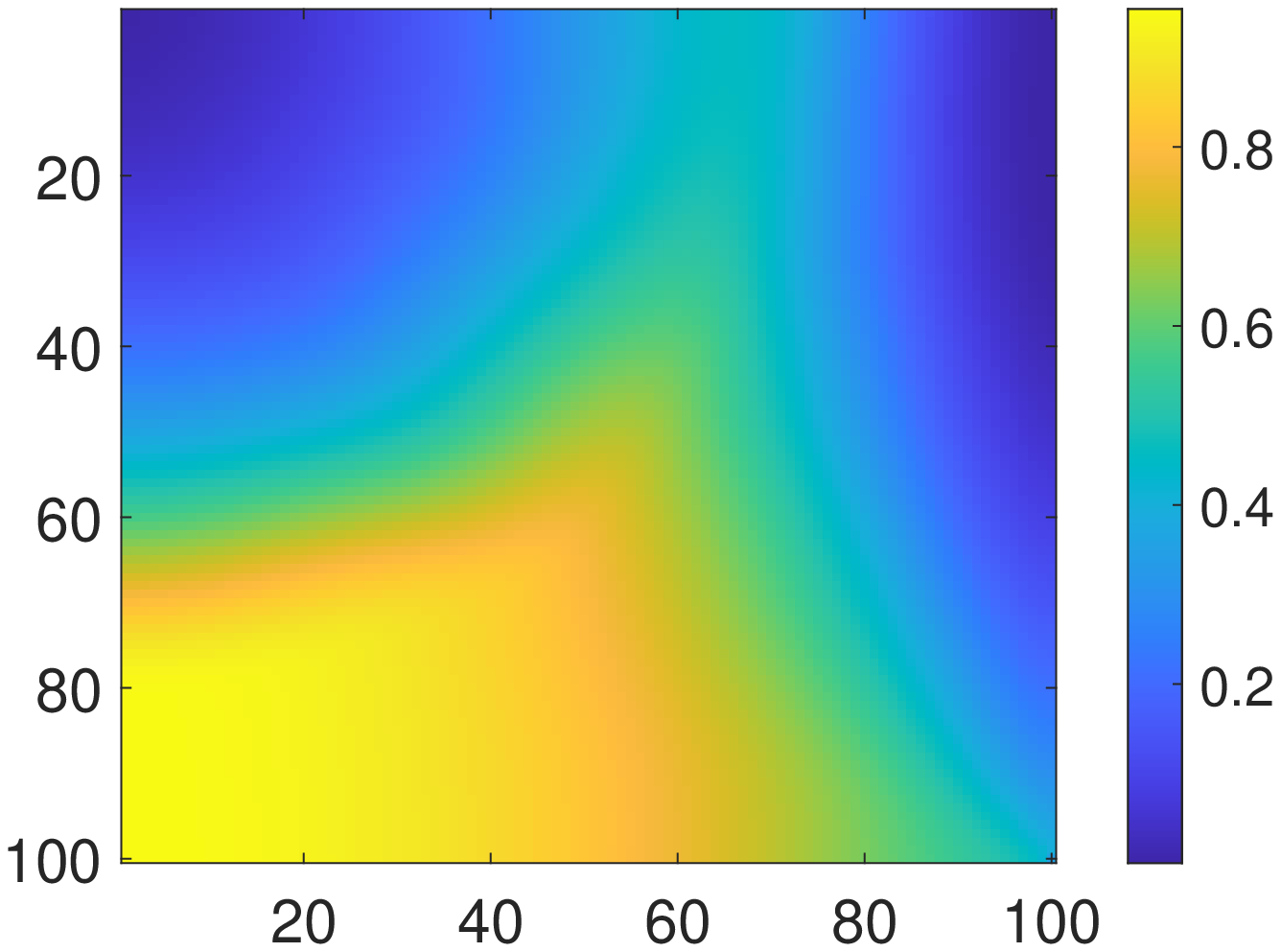}
\end{minipage}
\begin{minipage}[t]{0.2\textwidth}
         \centering
         \includegraphics[width=3cm,height=2cm]{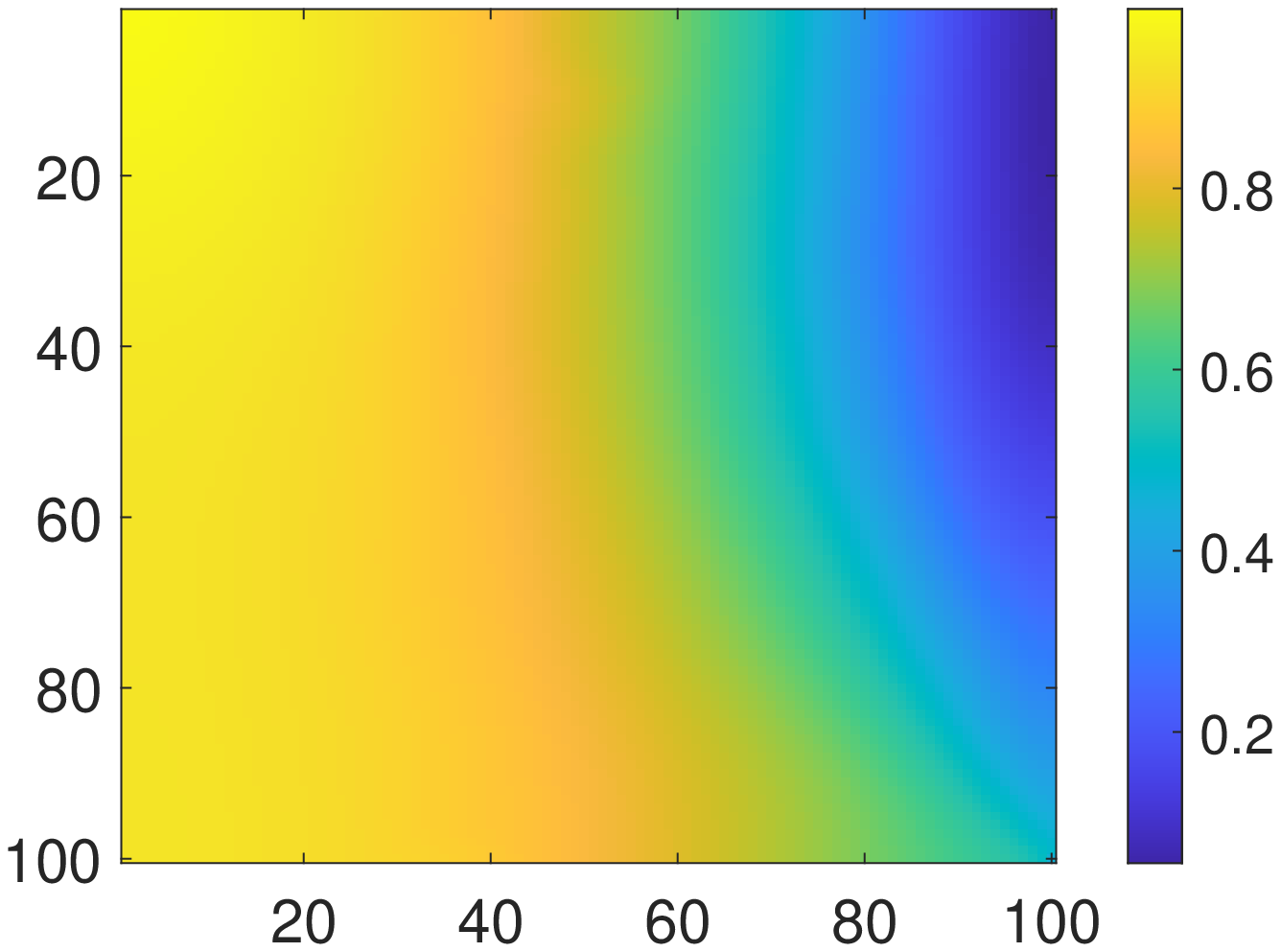}

         \includegraphics[width=3cm,height=2cm]{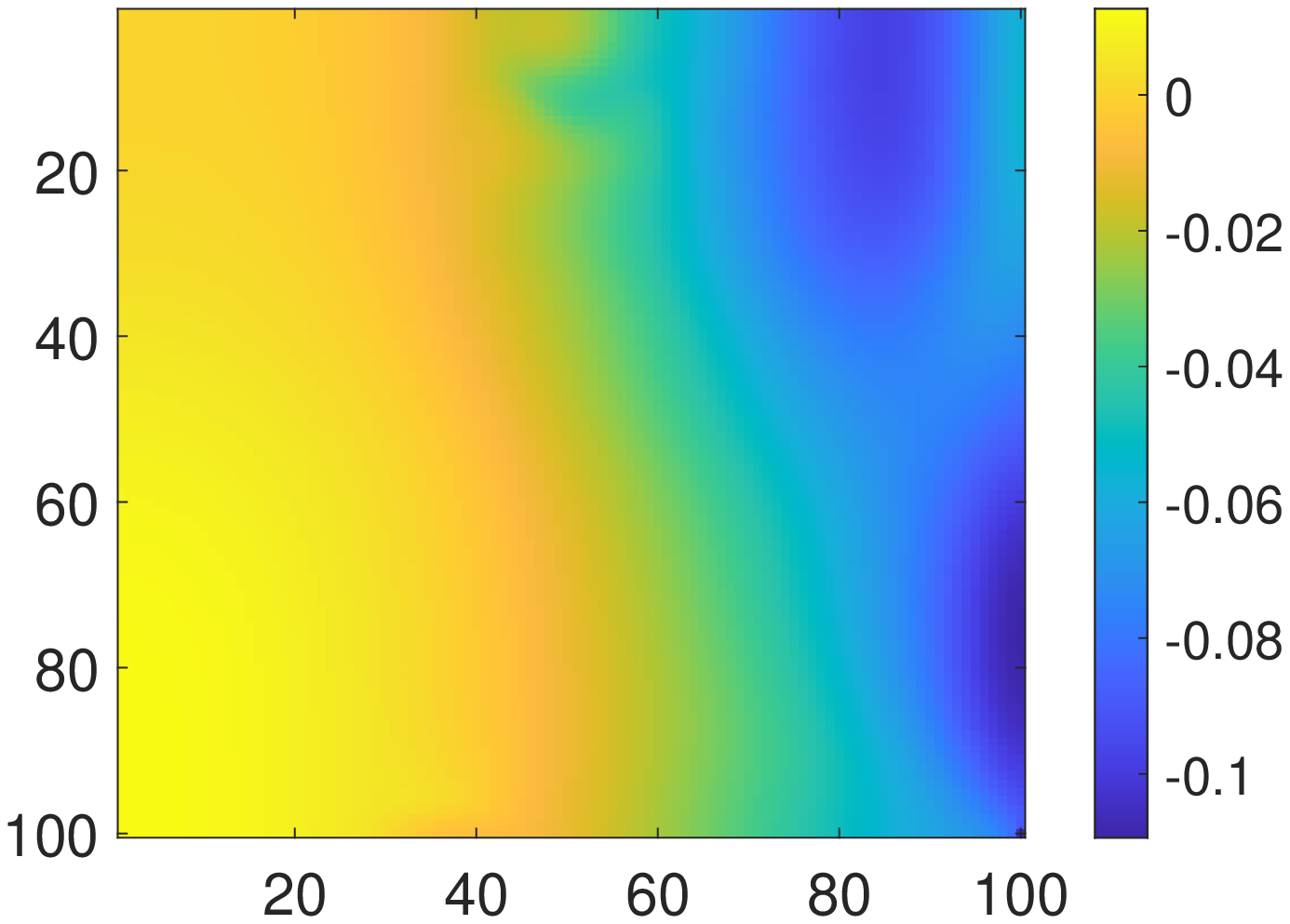}
\end{minipage}
\begin{minipage}[t]{0.2\textwidth}
         \centering
         \includegraphics[width=3cm,height=2cm]{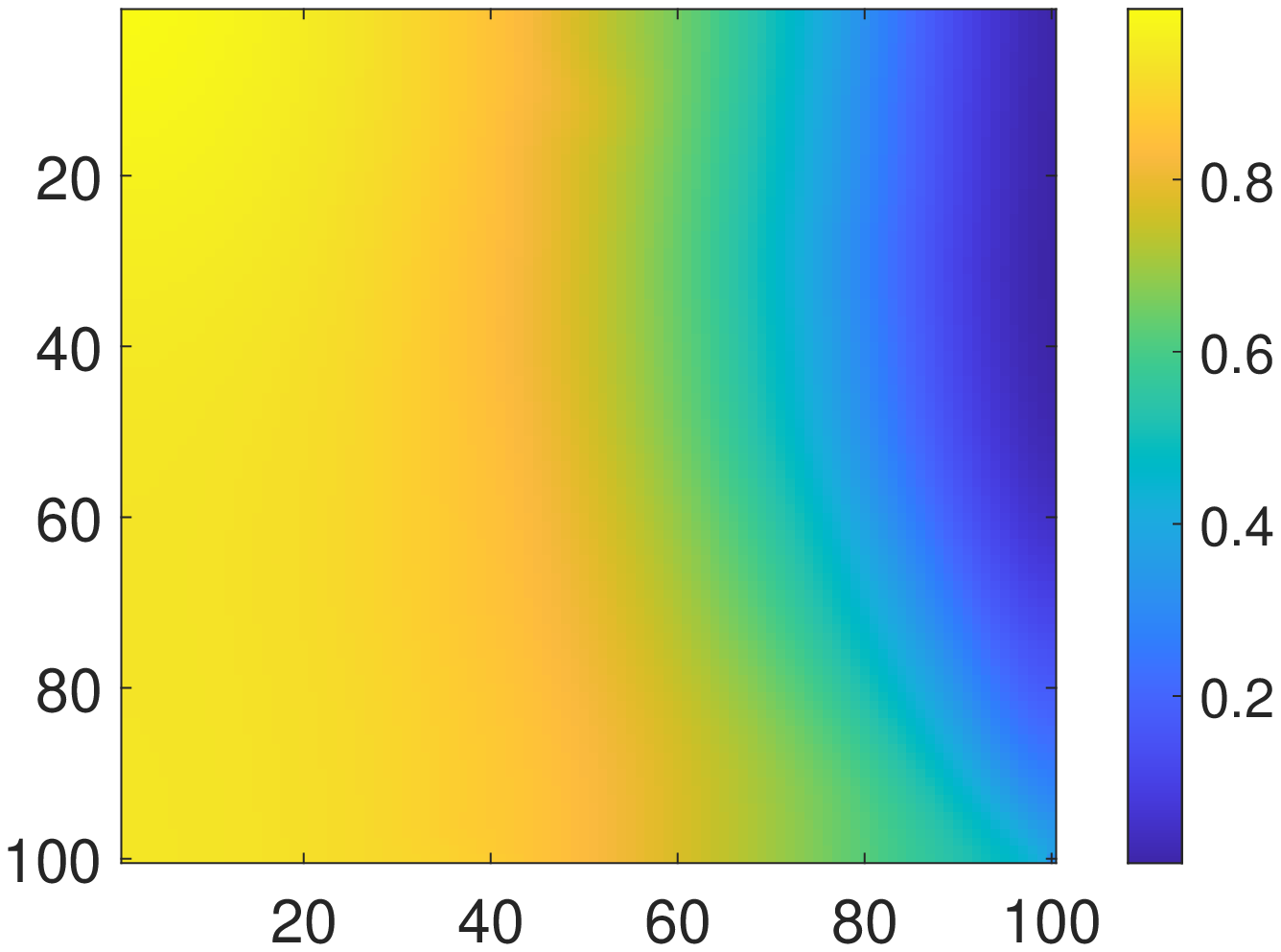}
         
         \includegraphics[width=3cm,height=2cm]{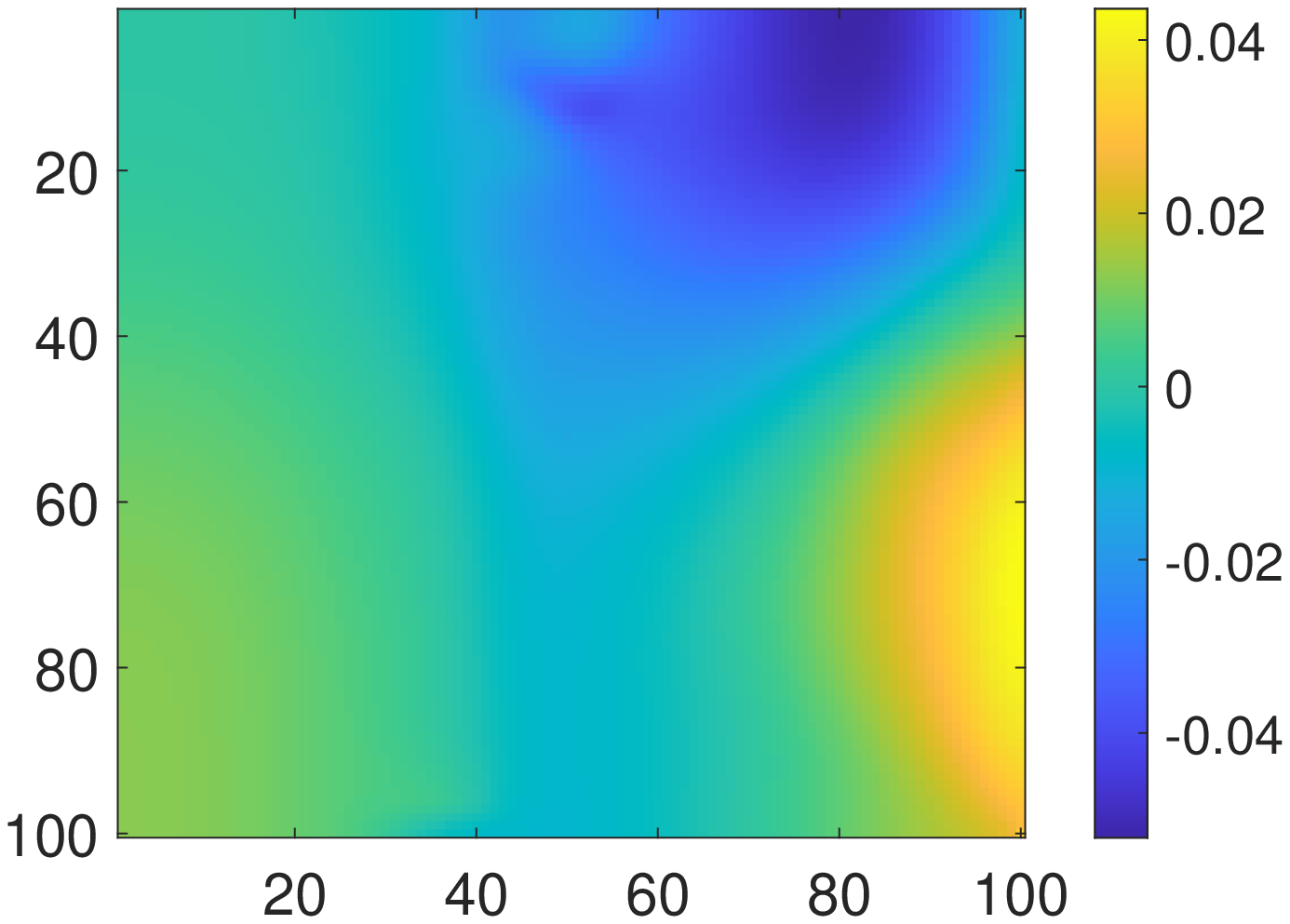}
\end{minipage}
\caption{Snapshots  of $S$ and comparison of the solutions with different data at $T=100$. Top: snapshot  of $S$ . Bottom: pointwise error of the solution. Left: two-phase  solution. Middle:
full data. Right: $\frac{1}{2}$ data}
 \label{Sub4_2}
\end{figure}

\begin{figure}[!b]
\centering

     \begin{subfigure}[b]{0.4\textwidth}
         \centering
         \includegraphics[width=3cm,height=2cm]{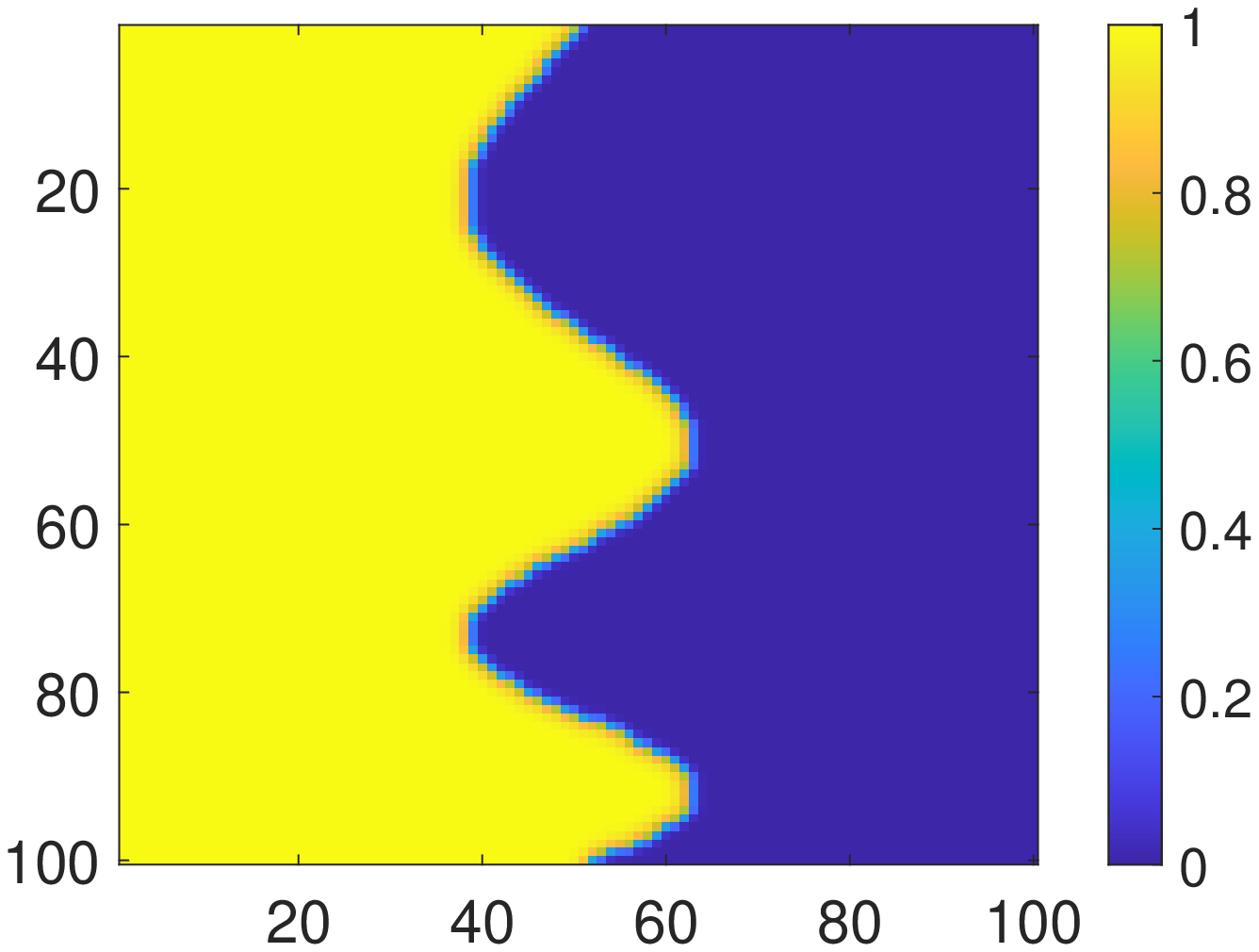}
         
         \includegraphics[width=3cm,height=2cm]{figure/data_sol_2_k2}
         \includegraphics[width=3cm,height=2cm]{figure/data_different_2_k2}
         
         \includegraphics[width=3cm,height=2cm]{figure/twophase_sol_2_k2}
         \includegraphics[width=3cm,height=2cm]{figure/twophase_different_2_k2}
         \caption{$T=0.1$}
     \end{subfigure}
     \begin{subfigure}[b]{0.4\textwidth}
         \centering
         \includegraphics[width=3cm,height=2cm]{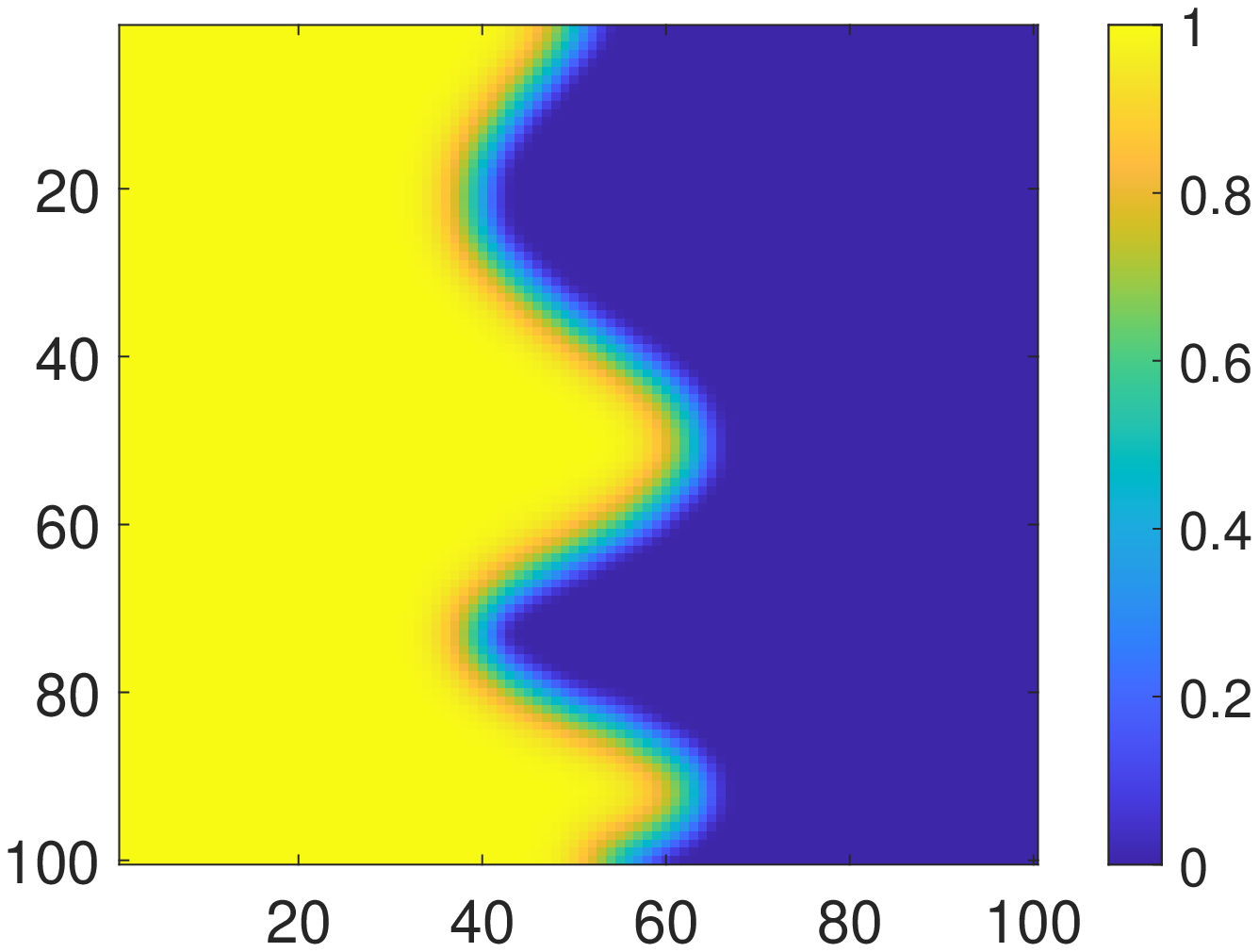}
         
         \includegraphics[width=3cm,height=2cm]{figure/data_sol_2_k20}
         \includegraphics[width=3cm,height=2cm]{figure/data_different_2_k20}
         
         \includegraphics[width=3cm,height=2cm]{figure/twophase_sol_2_k20}
         \includegraphics[width=3cm,height=2cm]{figure/twophase_different_2_k20}
         \caption{$T=1$}
     \end{subfigure}    
     \begin{subfigure}[b]{0.4\textwidth}
         \centering
         \includegraphics[width=3cm,height=2cm]{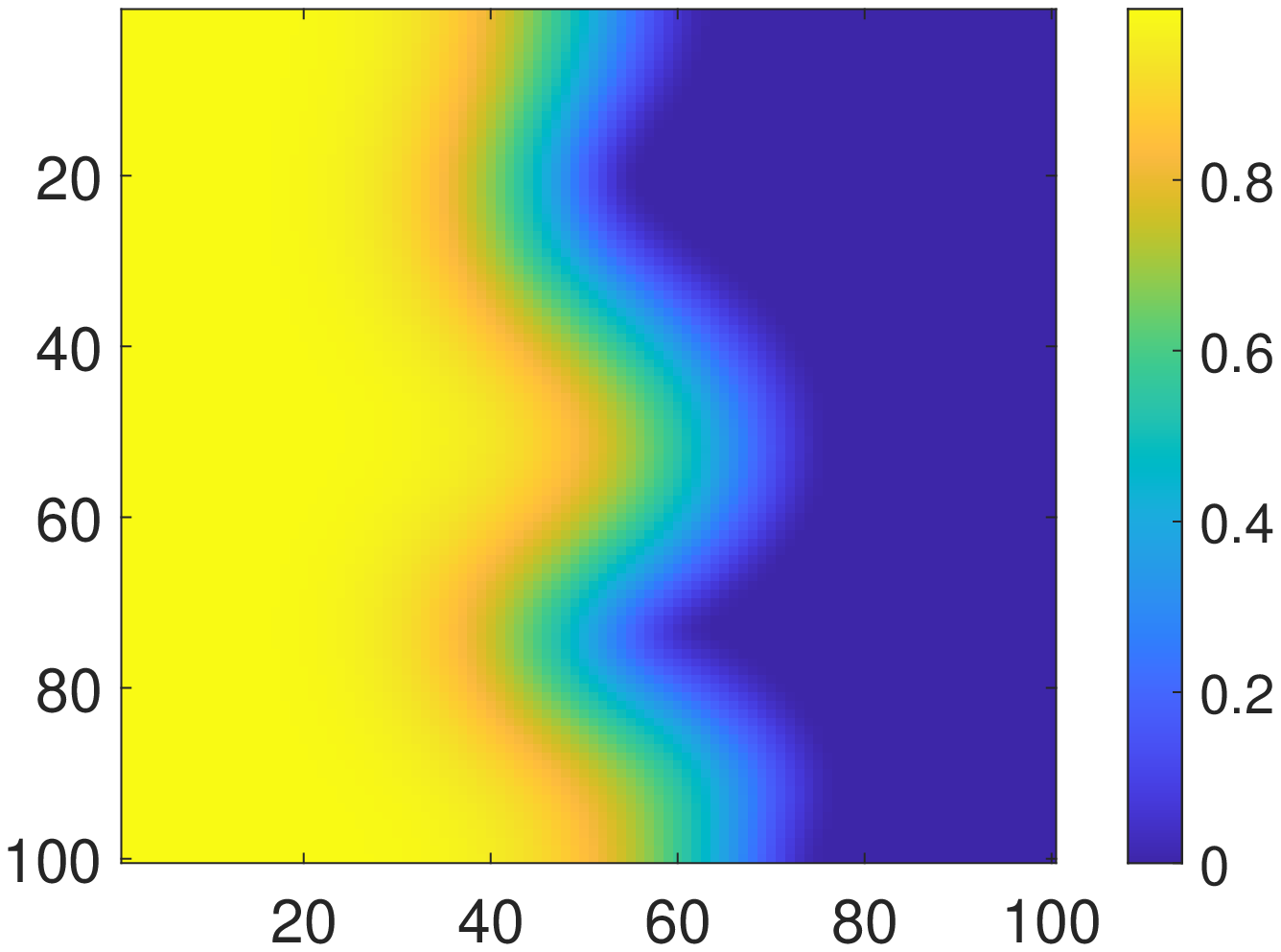}
         
         \includegraphics[width=3cm,height=2cm]{figure/data_sol_2_k200}
         \includegraphics[width=3cm,height=2cm]{figure/data_different_2_k200}
         
         \includegraphics[width=3cm,height=2cm]{figure/twophase_sol_2_k200}
         \includegraphics[width=3cm,height=2cm]{figure/twophase_different_2_k200}
         \caption{$T=10$}
     \end{subfigure}
     \begin{subfigure}[b]{0.4\textwidth}
         \centering
         \includegraphics[width=3cm,height=2cm]{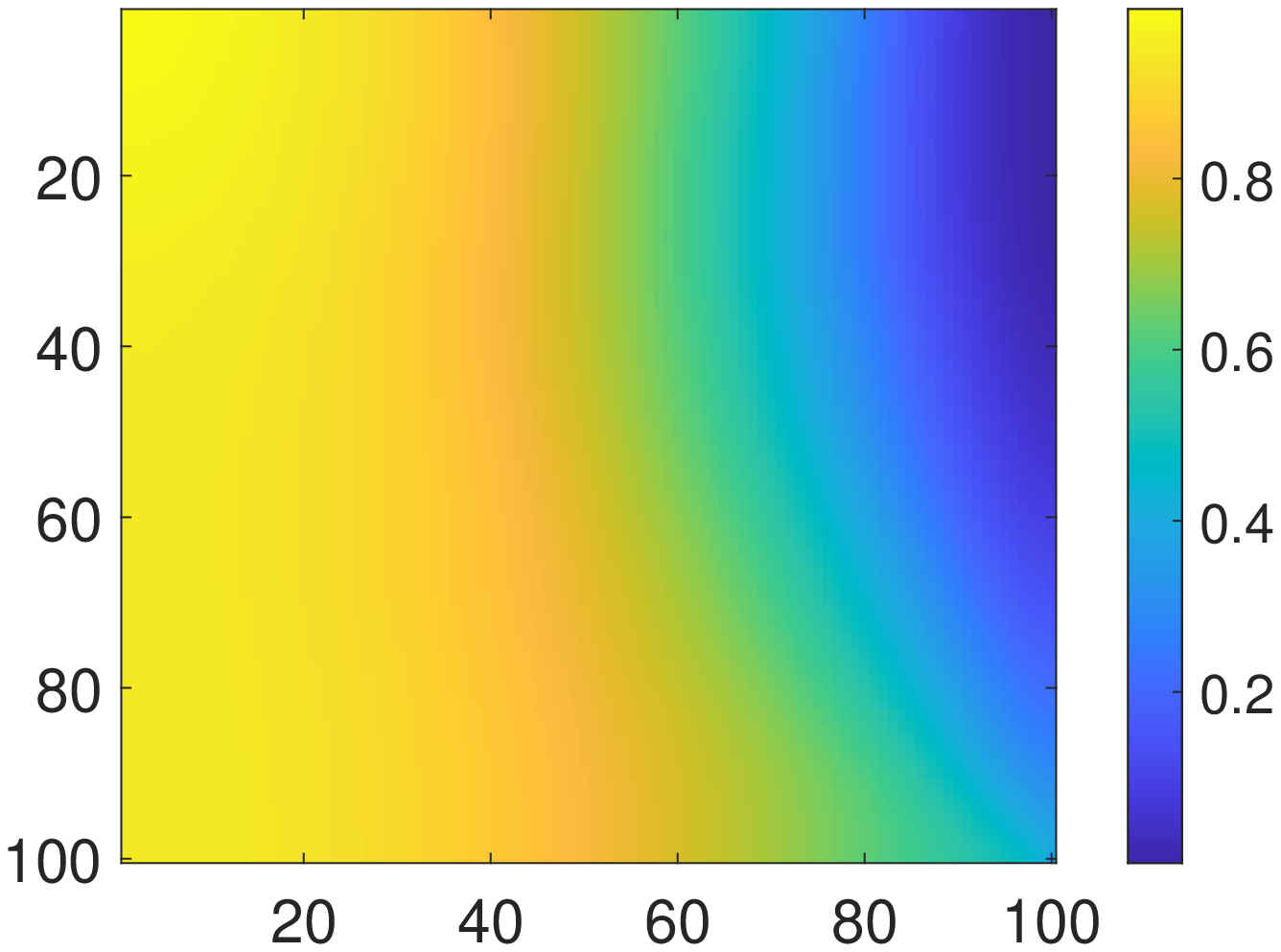}
         
         \includegraphics[width=3cm,height=2cm]{figure/data_sol_2_k2000}
         \includegraphics[width=3cm,height=2cm]{figure/data_different_2_k2000}
         
         \includegraphics[width=3cm,height=2cm]{figure/twophase_sol_2_k2000}
         \includegraphics[width=3cm,height=2cm]{figure/twophase_different_2_k2000}
         \caption{$T=100$}
     \end{subfigure}

\caption{Snapshot of the solutions $S$. Top: Reference solution (exact). Middle-Left:
data assimilation solution. Middle-Right:difference between data assimilation solution and reference solution. Bottom-Left: two-phase  solution. Bottom-Right:
difference between two-phase  solution and reference solution.}
\label{fig:error_case1-1-4}
\end{figure}

\section{Conclusion}
While two-phase models have a long history of success on numerical reservoir simulation, they tend to lose accuracy
on more complicated problems due to the insufficient and inaccurate knowledge of the initial state.
To overcome this difficulty, the observational measurements
can be directly inserted into the mathematical model to improve the accuracy.     In our work, we have proposed, analyzed, and tested a novel continuous data assimilation two-phase flow algorithm for reservoir simulation which combines the coarse grid saturation measurement data with the two-phase flow problem. 
We have shown a stability estimate and an exponentially decaying error bound between the exact and approximate solutions until the error hits a threshold depending on the order of data resolution.  We also looked at two numerical computations and observed that synchronization is achievable at a resolution much more coarse than suggested by the rigorous analysis, which is consistent with experiments carried out at other studies. Moreover,  we demonstrated numerically that machine precision synchronization is achieved for data collected from a small
fraction of the domain. Tests of nudging on moving subdomains are also encouraging, a matter we will explore in a future work.

Future directions include the extension of the method and stability analysis in multi-phase flow and other physically relevant fluid models, e.g. plasma, magneto-fluid, etc, as well as pursuing a stability estimate of our model with partial observation on a small portion of the domain.

\begin{appendices}
In this appendix, we prove lemma \ref{lem:3.7}, lemma \ref{lem:eq1_bound}, lemma \ref{lem:eq2_bound} and theorem \ref{thm:3.11}. First, we will present the proof of lemma \ref{lem:3.7}.
\begin{proof}
Since  $P^{\eta}(t,\cdot)\in V_{0}$, set  $w=P^{\eta}$ in \eqref{DisSol} to  obtain 
\begin{align*}
\underline{\kappa}(t_{i+1}-t_{i})|P^{\eta}(t,\cdot)|_{K}^{2} & \leq\int_{t_{i}}^{t_{i+1}}\int_{\Omega}\kappa(S_{w})K|\nabla P^{\eta}(t,\cdot)|^{2}=\int_{t_{i}}^{t_{i+1}}\int_{\Omega}q_{t}(t,\cdot)P^{\eta}(t,\cdot)\\
 & \leq C(t_{i+1}-t_{i}) \, \, \|q_{t}\|_{L^{\infty}(0,T;H^{-1}(\Omega))}\, \, \|P^{\eta}\|_{L^{\infty}(0,T;H^{1}(\Omega))}.
\end{align*}
Therefore, we have $c_{K}\|P^{\eta}\|_{L^{\infty}(0,T;V_{0})}\leq\|P^{\eta}\|_{L^{\infty}(0,T;H^{1}(\Omega))}\leq C_{K}\|P^{\eta}\|_{L^{\infty}(0,T;V_{0})}$
and
\[
\|P^{\eta}\|_{L^{\infty}(0,T;H^{1})}\leq C_{K}^{2}\underline{\kappa}^{-1}\|q_{t}\|_{L^{\infty}(0,T;H^{-1}(\Omega))}\,.
\]
Then consider $v=\theta^{\eta}$ in  \eqref{DisSol} and obtain 
\[
\int_{0}^{T}\int_{\Omega}\Big(\partial^{\eta}(T(\theta^{\eta}))\theta^{\eta}+\kappa_{w}K\nabla P^{\eta}\cdot\nabla\theta^{\eta}+K|\nabla\theta^{\eta}|^{2}\Big)=\int_{0}^{T}\int_{\Omega}q_{w}\theta^{\eta}\,.
\]
After using Lemma \ref{lem:lem3}, we have 
\[
\int_{0}^{T}\int_{\Omega}\Big(\partial^{\eta}(T(\theta^{\eta}))\theta^{\eta}\Big)\geq-\int_{\Omega}\int_{0}^{\theta_{0}}(T(c)-T(\xi))d\xi\, , 
\]
and 
\[
\int_{\Omega}T(\theta^{\eta})(t,\cdot)-\int_{\Omega}T(\theta^{\eta})(0,\cdot)=\int_{0}^{t}\int_{\Omega}q_{w}\, . 
\]
Hence, we obtain 
\[
|\int_{\Omega}\theta^{\eta}(t,\cdot)|\leq C \, \left(|\int_{\Omega}\theta^{\eta}(0,\cdot)|+ |\int_{0}^{t}\int_{\Omega}q_{w} \, |\right)\, , 
\]
and therefore 
\[\|\theta^{\eta}(t,\cdot)\|_{H^{1}}^{2}\leq C\Big(\|\theta^{\eta}(t,\cdot)\|_{V_{0}}^{2}+\Big|\int_{0}^{t}\int_{\Omega}q_{w}\Big|^{2}+\Big|\int_{\Omega}\theta^{\eta}(0,\cdot)\Big|^{2}\Big)\,.\]
With the above  inequalities, one can  arrive at 
\begin{equation}
  \begin{split}
  |\theta^{\eta}|_{L^{2}(0,T;H^{1})}^{2} & \leq  C \,  |\theta^{\eta}|_{L^{2}(0,T;V_{0})}^{2}+\Big|\int_{0}^{t}\int_{\Omega}q_{w}\Big|^{2}+\Big|\int_{\Omega}\theta^{\eta}(0,\cdot)\Big|^{2}\\
& \leq C \, \big( \|q_{w}\|_{L^{2}(0,T;H^{-1}(\Omega))}^{2}+\|q_{t}\|_{L^{2}(0,T;H^{-1}(\Omega))}^{2}+\int_{\Omega}\int_{0}^{\theta_{0}}(T(\theta_{0})-T(\xi))d\xi\\
 &  \quad  \,  +\Big|\int_{0}^{T}\int_{\Omega}|q_{w}|\Big|^{2}+\Big|\int_{\Omega}\theta^{\eta}(0,\cdot)\Big|^{2} \, \big).
      \end{split}  
\end{equation}
\end{proof}

We will next present the proof of lemma \ref{lem:eq1_bound}. 
\begin{proof}
After subtracting the two  equations, we have 
\[
-\int_{0}^{T}\int_{\Omega}\kappa(S_{1})K\nabla(P_{2}-P_{1})\cdot\nabla w=\int_{0}^{T}\int_{\Omega}(\kappa(S_{2})-\kappa(S_{1}))K\nabla P_{2}\cdot\nabla w+\int_{0}^{T}\int_{\Omega}(q_{t,1}-q_{t,2})w.
\]
for all test function $w\in V_{0}$. Set  $w=P_{1}-P_{2}$ to get
\begin{equation}
\begin{split}
& \int_{0}^{T}\|\nabla(P_{2}-P_{1})\|_{L^{2}}^{2} \leq  C\int_{0}^{T}\int_{\Omega}\kappa(S_{1})K|\nabla(P_{2}-P_{1})|^{2}\\
 & =  \int_{0}^{T}\int_{\Omega}(\kappa(S_{2})-\kappa(S_{1}))K\nabla P_{2}\cdot\nabla(P_{1}-P_{2})+\int_{0}^{T}\int_{\Omega}(q_{t,1}-q_{t,2})(P_{1}-P_{2})\\
 & \leq \,  C\int_{0}^{T}\|\nabla P_{2}\|_{L^{\infty}}\|\kappa(S_{2})-\kappa(S_{1})\|_{L^{2}}\|\nabla(P_{2}-P_{1})\|_{L^{2}}+\int_{0}^{T}\|q_{t,1}-q_{t,2}\|_{L^{2}}\|P_{2}-P_{1}\|_{L^{2}}\,.
    \end{split}
    \end{equation}
And Since $\|P_{2}-P_{1}\|_{L^{2}}\leq C \, \|\nabla(P_{2}-P_{1})\|_{L^{2}}$,
we obtain the result
\[
\|\nabla(P_{2}-P_{1})\|_{L^{2}}\leq C\left(\|\kappa(S_{2})-\kappa(S_{1})\|_{L^{2}}+\|q_{t,2}-q_{t,1}\|_{L^{2}}\right).
\]
\end{proof}

We then present the proof of lemma \ref{lem:eq2_bound}.
\begin{proof}
After subtracting the two equations, we get 
\begin{equation}\label{eq:lemma_10_eq}
\begin{split}
    & \int_{0}^{t}\int_{\Omega}\partial_{t}(S_{2}-S_{1})v+K\nabla(\theta_{2}-\theta_{1})\cdot\nabla v+\kappa_{w}(S_{1})K\nabla(P_{2}-P_{1})\cdot\nabla v \\
 &  = \int_{0}^{t}\int_{\Omega}(\kappa_{w}(S_{1})-\kappa_{w}(S_{2}))K\nabla P_{2}\cdot\nabla v+\int_{0}^{t}(q_{w,1}-q_{w,2},v) \,,  
\end{split}
\end{equation}
for all test functions $v\in L^{2}(0,T;H^{1}(\Omega))$. Using Definition \ref{Greenfun}, we have the following estimates on the first two terms of the above equations
$$
\int_{\Omega}\partial_{t}(S_{2}-S_{1})G(e)  =\int_{\Omega}\partial_{t}(e)G(e)=a\left(G(\partial_{t}(e)\right),G(e) ) =\cfrac{1}{2}\partial_{t}\|G(e)\|_{V_{0}}^{2}=\cfrac{1}{2}\partial_{t}\|e\|_{V_{0}^{*}}^{2}\, ,\\
$$
and 
\begin{align*}
\int_{\Omega}K \, \nabla(\theta_{2}-\theta_{1})\cdot\nabla G(e) & =a(\theta_{2}-\theta_{1},G(e)) =(\theta_{2}-\theta_{1},e)\\
& =(\theta_{2}-\theta_{1},S_{2}-S_{1})-(\theta_{2}-\theta_{1},\pi(S_{2}-S_{1}))\, . 
\end{align*}
By Holder's inequality and Young's inequality, we then obtain
\begin{align*}
\int_{\Omega}K \, \nabla(\theta_{2}-\theta_{1})\cdot\nabla G(e) & \geq(\theta_{2}-\theta_{1},S_{2}-S_{1})-\|\theta_{2}-\theta_{1}\|_{L^{p}}\|\pi(S_{2}-S_{1})\|_{L^{q}}\\
 & \geq(\theta_{2}-\theta_{1},S_{2}-S_{1})-\cfrac{\delta_{p}^{p}}{p}\, \, \|\theta_{2}-\theta_{1}\|_{L^{p}}^{p}-\cfrac{1}{q\delta_{p}^{q}} \, \, \|\pi(S_{2}-S_{1})\|_{L^{q}}^{q}.
\end{align*}
Since $P_{2}\in L^{\infty}(0,T; W^{1,\infty} (\Omega))$,  the term $\int_{0}^{t}\int_{\Omega}(\kappa_{w}(S_{1})-\kappa_{w}(S_{2}))K \, \nabla P_{2}\cdot\nabla G(e)$ in \eqref{eq:lemma_10_eq} can be estimated as 
\begin{align*}
\int_{0}^{t}\int_{\Omega}(\kappa_{w}(S_{1})-\kappa_{w}(S_{2}))K\nabla P_{2}\cdot\nabla G(e) & \leq C\|P_{2}\|_{L^{\infty}(0,T,W^{1,\infty})}\int_{0}^{t}\int_{\Omega}|(\kappa_{w}(S_{1})-\kappa_{w}(S_{2}))|\cdot|\nabla G(e)|\\
 & \leq C\int_{0}^{t}\|\kappa_{w}(S_{1})-\kappa_{w}(S_{2})\|_{L^{2}}\|\nabla G(e)\|_{L^{2}}\\
 & \leq C\int_{0}^{t}\Big(\cfrac{\delta}{2}\|\kappa(S_{2})-\kappa(S_{1})\|_{L^{2}}^{2}+\cfrac{1}{2\delta}\|\nabla G(e)\|_{L^{2}}^{2}\Big).
\end{align*}
Similarly, the other two  terms $\int_{0}^{t}(q_{w,1}-q_{w,2},G(e))$ and $\int_{0}^{t}\int_{\Omega}\kappa_{w}(S_{1})K\nabla(P_{2}-P_{1})\cdot\nabla G(e)$
in \eqref{eq:lemma_10_eq} can be bounded above as 
\[
\int_{0}^{t}(q_{w,1}-q_{w,2},G(e))\leq\int_{0}^{t}\cfrac{\delta}{2}\|q_{w,1}-q_{w,2}\|_{L^{2}}^{2}+\cfrac{1}{2\delta}\int_{0}^{t}\|G(e)\|_{L^{2}}^{2}\, , 
\]
and 
\begin{align*}
\int_{0}^{t}\int_{\Omega}\kappa_{w}(S_{1})K\nabla(P_{2}-P_{1})\cdot\nabla G(e) & \leq C\int_{0}^{t}\left(\cfrac{\delta}{2}\|\nabla(P_{2}-P_{1})\|_{L^{2}}^{2}+\cfrac{1}{2\delta}\|\nabla G(e)\|_{L^{2}}^{2}\right).
\end{align*}
After inserting the above estimates into \eqref{eq:lemma_10_eq}, we obtain
\begin{align*}
 & \cfrac{1}{2}\partial_{t}\int_{0}^{t}\|e\|_{V_{0}^{*}}^{2}+\int_{0}^{t}(\theta_{2}-\theta_{1},S_{2}-S_{1})\\
 & \leq  \cfrac{\delta_{p}^{p}}{p}\int_{0}^{t}\|\theta_{2}-\theta_{1}\|_{L^{p}}^{p}+\cfrac{1}{q\delta_{p}^{q}}\int_{0}^{t}\|\pi(S_{2}-S_{1})\|_{L^{q}}^{q}+C\cfrac{1}{2\delta}\int_{0}^{t}\left(\|\nabla G(e)\|_{L^{2}}^{2}+\|G(e)\|_{L^{2}}^{2}\right)\\
 & + C \, \cfrac{\delta}{2}\int_{0}^{t}\left(\|\kappa(S_{2})-\kappa(S_{1})\|_{L^{2}}^{2}+\|\nabla(P_{2}-P_{1})\|_{L^{2}}^{2}+\|q_{w,1}-q_{w,2}\|_{L^{2}}^{2}\right)\, , 
\end{align*}
for some constant $C>0$. Since $|\theta_{2}-\theta_{1}|\leq C|S_{2}-S_{1}|$
 and  $\|e\|_{L^{2}}\leq C\|\nabla G(e)\|_{L^2}$, we have 
\begin{align*}
 & \cfrac{1}{2}\partial_{t}\int_{0}^{t}\|e\|_{V_{0}^{*}}^{2}+\int_{0}^{t}(\theta_{2}-\theta_{1},S_{2}-S_{1})\\
& \leq  \cfrac{\delta_{p}^{p}}{p}\int_{0}^{t}\|S_{2}-S_{1}\|_{L^{p}}^{p}+\cfrac{1}{q\delta_{p}^{q}}\int_{0}^{t}\|\pi(S_{2}-S_{1})\|_{L^{q}}^{q}+C\cfrac{1}{2\delta}\int_{0}^{t}\|\nabla G(e)\|_{L^{2}}^{2}\\
 & + \, C\cfrac{\delta}{2}\int_{0}^{t}\Big(\|\kappa(S_{2})-\kappa(S_{1})\|_{L^{2}}^{2}+\|\nabla(P_{2}-P_{1})\|_{L^{2}}^{2}+\|q_{w,1}-q_{w,2}\|_{L^{2}}^{2}\Big).
\end{align*}
Then  consider $v=\pi(S_{2}-S_{1})$ in \eqref{eq:lemma_10_eq},  and obtain 
\begin{align*}
\cfrac{1}{q}\Big(\|\pi(S_{2}-S_{1})(t,\cdot)\|_{L^{q}}^{q}-\|\pi(S_{2}-S_{1})(0,\cdot)\|_{L^{q}}^{q}\Big) & =\int_{0}^{t}\int_{\Omega}\partial_{t}(S_{2}-S_{1})(\pi(S_{2}-S_{1}))^{q-1}\\
 & =\int_{0}^{t}(q_{w,2}-q_{w,1},(\pi(S_{2}-S_{1}))^{q-1}).
\end{align*}
Therefore, we have 
$$
\cfrac{1}{q}\|\pi(S_{2}-S_{1})\|_{L^{q}(0,t;L^{q})}^{q}\leq\|\pi(S_{2}-S_{1})\|_{L^{q}(0,t;L^{q})}^{q-1}\|q_{w,2}-q_{w,1}\|_{L^{q}(0,t;L^{q})}+\cfrac{1}{q}\|\pi(S_{2}-S_{1})(0,\cdot)\|_{L^{q}(\Omega)}^{q}\, , 
$$
and 
$$
\|\pi(S_{2}-S_{1})\|_{L^{q}(0,t;L^{q})}^{q}\leq C_{q}\left(\|\pi(S_{2}-S_{1})(0,\cdot)\|_{L^{q} }^{q}+\|q_{w,2}-q_{w,1}\|_{L^{q}(0,t;L^{q})}^{q}\right), 
$$
which proves the lemma. 
\end{proof}

Finally, we will present the proof of theorem \ref{thm:3.11}
\begin{proof}
From Lemma \ref{lem:eq1_bound}, we have 
\begin{align*}
\|\nabla(P_{2}-P_{1})\|_{L^{2}}^{2} & \leq C\Big(\|\kappa(S_{2})-\kappa(S_{1})\|_{L^{2}}^{2}+\|q_{t,2}-q_{t,1}\|_{L^{2}}^{2}\Big)\,.
\end{align*}
Therefore, we observe that 
\begin{align*}
 & \cfrac{1}{2}\partial_{t}\int_{0}^{t}\|e\|_{V_{0}^{*}}^{2}+\int_{0}^{t}(\theta_{2}-\theta_{1},S_{2}-S_{1})\\
\leq & \cfrac{\delta_{p}^{p}}{p}\int_{0}^{t}\|S_{1}-S_{2}\|_{L^{p}}^{p}+\cfrac{C_{q}}{q\delta_{p}^{q}}\Big(\|\pi(S_{2}-S_{1})(0,\cdot)\|_{L^{q}}^{q}+\|q_{w,2}-q_{w,1}\|_{L^{q}(0,t;L^{q})}^{q}\Big)\\
 & +C\cfrac{\delta}{2}\int_{0}^{t}\Big(\|\kappa(S_{2})-\kappa(S_{1})\|_{L^{2}}^{2}+\|q_{w,2}-q_{w,1}\|_{L^{2}}^{2}+\|q_{t,2}-q_{t,1}\|_{L^{2}}^{2}\Big)+C\cfrac{1}{2\delta}\int_{0}^{t}\|\nabla G(e)\|_{L^{2}}^{2}.
\end{align*}
Since $|\kappa(S_{2})-\kappa(S_{1})|\leq(\theta_{2}-\theta_{1})(S_{2}-S_{1})$,
we have
\begin{align*}
 & \cfrac{1}{2}\partial_{t}\int_{0}^{t}\|e\|_{V_{0}^{*}}^{2}+(1-\cfrac{C\delta}{2})\int_{0}^{t}(\theta_{2}-\theta_{1},S_{2}-S_{1})\\
 & \leq  \cfrac{\delta_{p}^{p}}{p}\int_{0}^{t}\|S_{1}-S_{2}\|_{L^{p}}^{p}+\cfrac{C_{q}}{q\delta_{p}^{q}}\Big(\|\pi(S_{2}-S_{1})(0,\cdot)\|_{L^{q}}^{q}+\|q_{w,2}-q_{w,1}\|_{L^{q}(0,t;L^{q})}^{q}\Big)\\
 & +C\cfrac{\delta}{2}\int_{0}^{t}\Big(\|q_{w,2}-q_{w,1}\|_{L^{2}}^{2}+\|q_{t,2}-q_{t,1}\|_{L^{2}}^{2}\Big)+C\cfrac{1}{2\delta}\int_{0}^{t}\|\nabla G(e)\|_{L^{2}}^{2}.
\end{align*}
By  taking $\delta=C^{-1}$ in the above inequality, we obtain
\begin{align*}
 & \partial_{t}\int_{0}^{t}\|e\|_{V_{0}^{*}}^{2}+\int_{0}^{t}(\theta_{2}-\theta_{1},S_{2}-S_{1})\\
 & \leq2\left(  \cfrac{\delta_{p}^{p}}{p}\int_{0}^{t}\|S_{1}-S_{2}\|_{L^{p}}^{p}+\cfrac{C_{q}}{q\delta_{p}^{q}}\Big(\|\pi(S_{2}-S_{1})(0,\cdot)\|_{L^{q}}^{q}+\|q_{w,2}-q_{w,1}\|_{L^{q}(0,t;L^{q})}^{q}\right)\\
 & +\cfrac{1}{2}\int_{0}^{t}\left(\|q_{w,2}-q_{w,1}\|_{L^{2}}^{2}+\|q_{t,2}-q_{t,1}\|_{L^{2}}^{2}\Big)+\cfrac{C^{2}}{2}\int_{0}^{t}\|\nabla G(e)\|_{L^{2}}^{2}\right).
\end{align*}
Since $\int_{0}^{t}(\theta_{2}-\theta_{1},S_{2}-S_{1})\geq C\int_{0}^{t}\int_{\Omega}|S_{2}-S_{1}|^{2+\tau},$
we can choose a $\delta_{p}>0$ such that 
\begin{align*}
 & \partial_{t}\int_{0}^{t}\|e\|_{V_{0}^{*}}^{2}+\cfrac{1}{2}\int_{0}^{T}(\theta_{2}-\theta_{1},S_{2}-S_{1})\\
 & \leq  C \, \left(\|\pi(S_{2}-S_{1})(0,\cdot)\|_{L^{q_{0}}}^{q_{0}}+\|q_{w,2}-q_{w,1}\|_{L^{q_{0}}(0,T;L^{q_{0}})}^{q_{0}}\right)\\
 & + \, C \, \int_{0}^{t} \|\nabla G(e)\|_{L^{2}}^{2}+\cfrac{1}{2}\, \|q_{w,2}-q_{w,1}\|_{L^{2}(0,T;L^{2})}^{2}+ \cfrac{1}{2}\, \|q_{t,2}-q_{t,1}\|_{L^{2}(0,T;L^{2})}^{2}\,.
\end{align*}
Then denote $E=\|\nabla G(e)\|_{L^{2}}^{2}$, and  since $$\|e\|_{V_{0}^{*}}^{2}=\int_{\Omega}K|\nabla G(e)|^{2}\geq c\|\nabla G(e)\|_{L^{2}}^{2}=c\, E\,,$$
and $$\|\nabla G(e)\|_{L^{q_{p}}}^{q_{p}}\leq C\|\nabla G(e)\|_{L^{2}}^{q_{p}}\, , $$
we can get 
\begin{align*}
 & E(t)-E(0)+\cfrac{1}{2}\int_{0}^{t}(\theta_{2}-\theta_{1},S_{2}-S_{1})\\
& \leq  C\int_{0}^{t}E+C \, \left(\|\pi(S_{2}-S_{1})(0,\cdot)\|_{L^{q_{0}}(\Omega)}^{q_{0}}+\|q_{w,2}-q_{w,1}\|_{L^{q_{0}}(0,T;L^{q_{0}} (\Omega))}^{q_{0}} \right)\\
 & +\cfrac{1}{2}\, \left(\|q_{w,2}-q_{w,1}\|_{L^{2}(0,T;L^{2} (\Omega))}^{2}+\|q_{t,2}-q_{t,1}\|_{L^{2}(0,T;L^{2}(\Omega))}^{2}\right) \,.
\end{align*}
With that, we at last arrive at
\begin{align*}
E(t)\leq & \, C\, e^{Ct}\Big(\|\pi(S_{2}-S_{1})(0,\cdot)\|_{L^{q_{0}}}^{q_{0}}+\|q_{w,2}-q_{w,1}\|_{L^{q_{0}}(0,T;L^{q_{0}})}^{q_{0}}+E(0)\\
 & \quad\, \quad  +\|q_{w,2}-q_{w,1}\|_{L^{2}(0,T;L^{2})}^{2}+\|q_{t,2}-q_{t,1}\|_{L^{2}(0,T;L^{2})}^{2}\Big)\, , 
\end{align*}
where $q_{0}=\cfrac{2+\tau}{1+\tau}.$
\end{proof}
\end{appendices}

\end{document}